\newtheorem{theorem}{Theorem}
\newtheorem{lemma}[theorem]{Lemma}
\theoremstyle{definition}
\def \deg {{\rm deg}}
\def \capacity {{\rm capacity}}
\def \l {\lambda}
\def \gcd {{\rm gcd}}
\def \leq {\leqslant}
\def \geq {\geqslant}
\def \mod#1{{\:({\rm mod}\ #1)}}
\let\oldproofname=\proofname
\renewcommand{\proofname}{\rm\bf{\oldproofname}}
\title{Decompositions of complete multigraphs into stars of varying sizes}
\author{
Rosalind A. Cameron$^a$ and Daniel Horsley$^b$}
\date{$^a$School of Mathematics and Statistics\\
University of Canterbury\\
Christchurch 8140, New Zealand\\
\texttt{rahoyte@outlook.com} \\[0.2cm]
$^b$School of Mathematics \\
Monash University \\
Vic 3800, Australia \\
\texttt{danhorsley@gmail.com}
}
\begin{document}
\maketitle\thispagestyle{empty}
\def\baselinestretch{1.2}\small\normalsize
\sloppy

\begin{abstract}
In 1979 Tarsi showed that an edge decomposition of a complete multigraph into stars of size $m$ exists whenever some obvious necessary conditions hold. In 1992 Lonc gave necessary and sufficient conditions for the existence of an edge decomposition of a (simple) complete graph into stars of sizes $m_1,\ldots,m_t$. We show that the general problem of when a complete multigraph admits a decomposition into stars of sizes $m_1,\ldots,m_t$ is $\mathsf{NP}$-complete, but that it becomes tractable if we place a strong enough upper bound on $\max(m_1,\ldots,m_t)$. We determine the upper bound at which this transition occurs. Along the way we also give a characterisation of when an arbitrary multigraph can be decomposed into stars of sizes $m_1,\ldots,m_t$ with specified centres, and a generalisation of Landau's theorem on tournaments.
\end{abstract}

\section{Introduction}

For a positive integer $m$, an $m$-star is a connected simple graph with $m$ edges, all of which are incident with a single vertex.
For $m \geq 2$, this vertex is unique and is called the \emph{centre} of the star. For some of the results in this paper it will be convenient to treat each star as having a unique centre, so we assume that $1$-stars have exactly one of their vertices designated as their centre. All the multigraphs considered in this paper will be loopless. Let $G$ be a multigraph. For distinct vertices $u$ and $v$ of $G$, we denote by $\mu_G(uv)$ the number of edges of $G$ between $u$ and $v$. A \emph{decomposition} $\mathcal{D}$ of $G$ is a collection of sub-multigraphs of $G$ such that $\sum_{H \in \mathcal{D}}\mu_H(uv)=\mu_G(uv)$ for all distinct vertices $u$ and $v$ of $G$. A \emph{packing} of $G$ is a decomposition of some sub-multigraph of $G$. We denote the $\l$-fold complete multigraph on $n$ vertices by $\l K_n$ and the $\l$-fold complete multigraph on vertex set $V$ by $\l K_V$. For a set $V$ we denote by $\binom{V}{2}$ the set $\{\{u,v\}:\mbox{$u,v \in V$ and $u \neq v$}\}$.

Tarsi \cite{Tarsi79} has shown that some obvious numerical necessary conditions for the existence of a decomposition of a complete multigraph into stars of a uniform specified size are also sufficient. The simple graph case of this result, along with the equivalent result on complete bipartite graphs, was independently proved by Yamamoto et al \cite{Yamamoto75}. Further, Lonc \cite{Lon} has given a simple numerical characterisation for the existence of a decomposition of a simple complete graph into stars of various specified sizes (Lonc in fact proved a more general result for uniform hypergraphs; his result for graphs was later rediscovered by Lin and Shyu \cite{LinShy96}). This paper deals with the common generalisation of these problems: when does a complete $\l$-fold multigraph admit an edge decomposition into stars of sizes $m_1,\ldots,m_t$? We show that this problem is $\mathsf{NP}$-complete if we allow $m_1,\ldots,m_t$ to take any values, but that the obvious necessary conditions for such a decomposition suffice if a suitable upper bound is placed on $\max(m_1,\ldots,m_t)$. It is worth noting that the analogous problems of when a complete $\l$-fold multigraph can be decomposed into matchings, paths or cycles of specified sizes have all been completely solved with numerical necessary and sufficient conditions (see \cite{Bar,Bry,BryHorMaeSmi}).

Problems concerning the decomposition of graphs into stars have been well studied. In addition to those already mentioned, {three} further results are {particularly} relevant to our purposes here. In \cite{Tarsi81} Tarsi showed a simple graph of order $n$ admits a decomposition into stars of sizes $m_1,\ldots,m_t$ provided its minimum degree is at least $\tfrac{n}{2}+\max(m_1,\ldots,m_t)-1$. In \cite{Hoffman04} Hoffman gave necessary and sufficient conditions for an arbitrary multigraph to have a decomposition into $m$-stars where the number of stars centred at each vertex is specified. In \cite{PriTar} it is shown that deciding whether an arbitrary $\lambda$-fold multigraph has a decomposition into stars of a uniform specified size is $\mathsf{NP}$-complete. See \cite{BryElZEynHof,Gardner,HofRob14} for other results on star decompositions.

Before stating our main result, we first formalise the primary question under investigation as a family of decision problems, one for each positive integer $\lambda$ and real number $\alpha$ such that $0 \leq \alpha \leq 1$.

\begin{description}[itemsep=0mm,parsep=0mm,topsep=1mm]
    \item[\textmd{\textsc{$(\lambda,\alpha)$-star decomp}}]

    \item[\textit{\textmd{Instance:}}]
Positive integers $n$ and $m_1,\ldots,m_t$ such that $\max(m_1,\ldots,m_t) \leq \alpha (n-1)$ and \mbox{$m_1+\cdots+m_t=\l\binom{n}{2}$}.

    \item[\textit{\textmd{Question:}}]
Does $\lambda K_n$ have a decomposition into stars of sizes $m_1,\ldots,m_t$?
\end{description}
Note that $m_1+\cdots+m_t=\l\binom{n}{2}$ and $\max(m_1,\ldots,m_t) \leq n-1$ are obvious necessary conditions for the existence of the required decomposition. Our main result is the following.

\begin{theorem}\label{Theorem:NPComplete}
Let $\lambda \geq 2$ be an integer. Then \textsc{$(\lambda,\alpha)$-star decomp} is $\mathsf{NP}$-complete if $\alpha>\alpha'$, where
\[\alpha'=
\left\{
  \begin{array}{ll}
    \tfrac{\lambda}{\lambda+1}, & \hbox{if $\lambda$ is odd;} \\
    1 - \frac{2}{\lambda}(3-2\sqrt{2}), & \hbox{if $\lambda$ is even.}
  \end{array}
\right.\]
Furthermore, if $\alpha \leq \alpha'$ then every instance of \textsc{$(\lambda,\alpha)$-star decomp} is feasible and the required decompositions can be constructed in polynomial time.
\end{theorem}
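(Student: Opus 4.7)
My plan is to treat the two halves of the theorem separately, using the two tools advertised in the abstract: a characterisation of decompositions of arbitrary multigraphs into stars with prescribed centres, and a generalisation of Landau's theorem on tournaments.

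For the feasibility direction, I would reformulate a star decomposition of $\lambda K_n$ as an orientation of its edges in which each edge points to the centre of its star. The in-degree at a vertex $v$ is then the sum $d_v$ of sizes of stars centred at $v$, so a decomposition exists iff one can partition the multiset $\{m_1,\ldots,m_t\}$ into $S_1,\ldots,S_n$ and orient $\lambda K_n$ so that $v$ has in-degree $d_v=\sum_{m\in S_v}m$ for each $v$, while moreover the specified-centre criterion holds at each vertex. The generalised Landau theorem will characterise when a score sequence $(d_v)$ is realisable, presumably by $\sum_{v\in U}d_v\geq \lambda\binom{|U|}{2}$ for every $U\subseteq V$ (with equality for $U=V$), so the problem reduces to exhibiting a centre-assignment rule whose $d_v$'s pass all these inequalities and whose $S_v$'s meet the specified-centre conditions. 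A balanced greedy rule---sort $m_1\geq\cdots\geq m_t$ and send each $m_i$ to the currently least-loaded vertex---looks most promising, because the bound $\max_i m_i\leq \alpha(n-1)$ controls how far any single step can push a $d_v$ from the balanced value $\tfrac{\lambda(n-1)}{2}$.

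The main obstacle on the feasibility side is pinning down the threshold $\alpha'$ exactly. It should emerge as the supremum of $\alpha$ for which the worst-case Landau inequality is preserved by the balanced allocation, typically evaluated on a set $U$ consisting of the least-loaded vertices. For odd $\lambda$ the extremal obstruction is discrete: a parity constraint forces some $d_v$ to miss $\tfrac{\lambda(n-1)}{2}$ by at least $\tfrac12$, yielding $\alpha'=\tfrac{\lambda}{\lambda+1}$. For even $\lambda$ the extremum is continuous; optimising a quadratic in the number of ``oversized'' stars should cleanly produce $3-2\sqrt 2=(\sqrt 2-1)^2$ as the critical constant. I expect this threshold computation, together with checking that the specified-centre criterion is automatically satisfied when the Landau inequalities hold, to be the most delicate step.

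For the hardness direction, membership in $\mathsf{NP}$ is immediate, since a proposed decomposition can be verified in polynomial time. For $\mathsf{NP}$-hardness I would reduce from a partition-type NP-complete problem such as \textsc{Partition} or \textsc{3-Partition}. From a partition instance I would engineer $n$ and a list of star sizes lying just above $\alpha'(n-1)$ in such a way that the Landau analysis above pins the $d_v$'s to essentially forced values at all but a few ``special'' vertices, with the residual freedom---how to distribute certain star sizes among these special vertices---being precisely the input partition problem. Because $\alpha>\alpha'$ the Landau inequalities become tight on the small sets containing these special vertices, forcing the restriction. The main difficulty here is engineering the instance so that the correspondence with the partition problem holds cleanly in both directions.
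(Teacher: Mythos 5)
Your proposal diverges from the paper in two substantive ways, and one of them is a genuine gap.

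First, the tool. You propose to run the feasibility argument through the generalised Landau theorem (Theorem~\ref{Theorem:OutNeighbourhoodTournament}), checking the degree sequence $(d_v)$ against Landau-type inequalities and then separately checking a ``specified-centre criterion'' at each vertex. This does not quite work, because these two conditions are not independent: the multistar of edges incident with $v$ in the tournament, and whether it decomposes into stars of sizes $S_v$, depend on the actual tournament chosen, not just on the degree sequence. Theorem~\ref{Theorem:OutNeighbourhoodTournament} only characterises the special case where $M_v$ has the form $\{b(v),1^{[a(v)-b(v)]}\}$ --- i.e.\ one ``big'' star and a pile of $1$-stars --- and the paper's proof of that theorem proceeds \emph{from} the more general Theorem~\ref{Theorem:RealTruthLambda}, not the other way around. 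The paper's proof of Theorem~\ref{Theorem:NPComplete} bypasses the Landau formulation entirely and works directly with Theorem~\ref{Theorem:RealTruthLambda}: it fixes an assignment of star sizes to vertices, ``compresses'' each $M_v$ into a majorising multiset $M_v^*$ to tame the space of restriction functions (via Lemma~\ref{Lemma:MinimalFProperties} / \ref{Lemma:MinimalFProperties1LambdaEven}), and then bounds $\Delta_f(\mathcal{K})$. You would need something equivalent; the Landau machinery alone is too coarse.

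Second, your ``balanced greedy rule'' (send each $m_i$, sorted descending, to the least-loaded vertex) is exactly the paper's greedy assignment for odd $\lambda$, but it provably fails for even $\lambda$. The paper gives the explicit counterexample: for $4K_{100}$ with star sizes $\{90^{[166]},73^{[36]},72^{[31]}\}$ (all $\leq \alpha'(n-1)$), greedy assignment produces a vertex $u$ with $M_u=\{90,73\}$ whose neighbours each get sum at least $180$ or three stars, and the restriction function with $f(u)=0$ and $f(v)=|M_v|$ elsewhere shows no decomposition with that assignment exists. The paper replaces greedy by an \emph{equitable assignment}: an allocation satisfying the stronger swap-stability condition (E2), namely that whenever $\sigma(M_u)<\sigma(M_v)$ and $x\in M_u$, $y\in M_v$ with $x<y$, one has $\sigma(M_v)-\sigma(M_u)\leq y-x$. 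Achieving this requires an iterative rebalancing argument, not a one-pass greedy pass. Your intuition about the origin of the two thresholds (parity obstruction for odd $\lambda$, a quadratic optimisation producing $3-2\sqrt2$ for even $\lambda$) is roughly right in spirit, but the optimisation actually plays out inside the $\Delta_f$ bounds, not the Landau inequalities.

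On the hardness side your plan is the right one and matches the paper: a strongly $\mathsf{NP}$-complete reduction from \textsc{3-partition} with star sizes engineered just above the threshold so that all but $q$ vertices have forced multisets, and the residual freedom on those $q$ vertices encodes the partition instance. The paper's Lemmas~\ref{Lemma:LambdaOddReduction} and \ref{Lemma:LambdaEvenReduction} carry this out, using Lemma~\ref{Lemma:partitionV} to force the structure; the even case additionally needs a prime-gap argument to select parameters $n,m,r$ so that the auxiliary quantity $c$ is an integer.
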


Our major tool in proving Theorem~\ref{Theorem:NPComplete} is a result which concerns the problem of packing an arbitrary multigraph $G$ with stars, where the sizes of the stars to be centred at each vertex are specified.
We give a characterisation of when such a packing exists and show there is a polynomial time (in the problem's input size) algorithm for deciding this. The input $I$ to this problem involves specifying the multiplicity function $\mu_G$ of $G$ represented by a sequence of $\binom{n}{2}$ nonnegative integers, and the sequence of star sizes $m_{v,1},\ldots,m_{v,|M_v|}$ for each $v\in V(G)$. Thus the size of the input $|I|$ is bounded below as follows.
\begin{equation}\label{Equation:InputSize}
|I| \geq \medop\sum_{\{ u,v\}\in \binom{V}{2}}\log (\mu_G(uv)+1)+\medop\sum_{v\in V}\medop\sum_{i=1}^{|M_v|}\log (m_{v,i}+1)
\end{equation}

Of course, if the sum of the star sizes is equal to the number of edges in $G$ (counting multiplicities) then a decomposition will result, and so our result also characterises decompositions. It generalises the result of Hoffman mentioned above.

For a multiset $M$ of positive integers we define $\sigma(M)$ to be the sum of all the elements of $M$ and for each $i \in \{0,\ldots,|M|\}$ we define $\sigma_i(M)$ to be the sum of the largest $i$ elements in $M$.

\begin{theorem}\label{Theorem:RealTruthLambda}
Let $G$ be a loopless multigraph with vertex set $V$ and, for each $v \in V$, let $M_v$ be a (possibly empty) multiset of positive integers. There is a packing of $G$ with stars such that $M_v$ is the multiset of sizes of stars centred at $v$ for each $v \in V$ if and only if
\[\sum_{v \in V} \sigma_{f(v)}(M_v) \leq \sum_{\{u,v\} \in \binom{V}{2}}\min(f(u)+f(v),\mu_G(uv))\]
for each function $f: V \rightarrow \mathbb{Z}$ such that $f(v) \in \{0,\ldots,|M_v|\}$ for each $v \in V$. Furthermore, there is a polynomial time (in the problem's input size) algorithm for deciding whether such a packing exists, and constructing one if so.
\end{theorem}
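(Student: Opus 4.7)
The approach is to encode the packing problem as an integer max-flow and invoke the max-flow min-cut theorem; this delivers both the characterisation and the polynomial-time algorithm in one stroke.

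\emph{Necessity.} Given a packing and any admissible $f$, at each vertex $v$ mark the $f(v)$ largest stars in $M_v$; these marked stars contribute $\sum_v \sigma_{f(v)}(M_v)$ edges in total. On each pair $\{u,v\}$ the number of marked-star edges is at most $\mu_G(uv)$ (packing bound) and also at most $f(u)+f(v)$ (each marked star at $u$ contributes at most one edge to $v$, giving $\leq f(u)$ such edges, and similarly at $v$), hence at most $\min(f(u)+f(v),\mu_G(uv))$. Summing over pairs gives the displayed inequality.

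\emph{Sufficiency via flow.} Build a network with source $s$, sink $t$, a node $N_\tau$ for each star $\tau$ to be placed (recording its centre $c(\tau)$ and size $m_\tau$), and a node $P_{uv}$ for each pair $\{u,v\}\in\binom{V}{2}$. Add arcs $s\to N_\tau$ of capacity $m_\tau$, arcs $N_\tau\to P_{\{c(\tau),w\}}$ of capacity $1$ for each $w\neq c(\tau)$, and arcs $P_{uv}\to t$ of capacity $\mu_G(uv)$. An integer $s$--$t$ flow of value $\sum_v\sigma(M_v)$ corresponds bijectively to the desired packing: the unit capacities force the leaves of each star to be distinct, and the $\mu_G(uv)$ capacities enforce the edge-multiplicity bound. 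By max-flow min-cut, feasibility is equivalent to every $s$--$t$ cut having capacity at least $\sum_v\sigma(M_v)$.

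\emph{Reducing the cut condition to the stated inequality.} Parameterise a cut by the sets $A$ of star-nodes and $B$ of pair-nodes on the source side; its capacity equals
\[
\sum_{\tau\notin A}m_\tau+\sum_{\tau\in A}\big|\{w\neq c(\tau):\{c(\tau),w\}\notin B\}\big|+\sum_{\{u,v\}\in B}\mu_G(uv).
\]
Setting $f(v)=|\{\tau\in A:c(\tau)=v\}|$, the tightest choice of $A$ for a fixed $f$-profile picks the $f(v)$ largest stars at each $v$, giving $\sum_v\sigma_{f(v)}(M_v)$ on the left of the ``cut $\geq$ total'' inequality. Using the identities $(n-1)\sum_v f(v)=\sum_{\{u,v\}}(f(u)+f(v))$ and $\sum_v f(v)\,|\{u:\{u,v\}\in B\}|=\sum_{\{u,v\}\in B}(f(u)+f(v))$, the remaining condition becomes $\sum_v\sigma_{f(v)}(M_v)\leq \sum_{\{u,v\}\notin B}(f(u)+f(v))+\sum_{\{u,v\}\in B}\mu_G(uv)$; minimising over $B$ by including precisely those pairs with $\mu_G(uv)<f(u)+f(v)$ produces the displayed inequality, and every admissible $f$ arises this way.

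\emph{Complexity.} The network has $O(n^2+\sum_v|M_v|)$ nodes and $O(n\sum_v|M_v|)$ arcs, polynomial in the input-size bound \eqref{Equation:InputSize}. Any strongly polynomial integer max-flow algorithm (for instance Edmonds--Karp) decides feasibility in polynomial time, and a certifying packing is read off from the $N_\tau\to P_{uv}$ arcs carrying unit flow. The main obstacle is the algebraic reduction from the min-cut condition to the stated form, combining the ``largest-$f(v)$'' optimisation on the $A$-side with the $\min(f(u)+f(v),\mu_G(uv))$ identity on the $B$-side; everything else is setup and standard max-flow machinery.
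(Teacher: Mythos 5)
Your proposal is correct and follows essentially the same route as the paper: both encode the packing problem as an integer max-flow (source $\to$ star-node $\to$ pair-node $\to$ sink, with capacities $m_\tau$, $1$, $\mu_G(uv)$ respectively), invoke max-flow min-cut and integrality, parameterise cuts by the ``how many stars at each vertex are on the source side'' function $f$, and then observe that for fixed $f$ the minimum-capacity cut takes the largest $f(v)$ stars at each $v$ and puts $\{u,v\}$ on the source side precisely when $\mu_G(uv)<f(u)+f(v)$, recovering the stated inequality. Your algebraic simplification via the two counting identities is a clean rewriting of the paper's evaluation of the middle cut term, and the complexity remarks coincide with the paper's.
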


We introduce some notation relating to Theorem~\ref{Theorem:RealTruthLambda}. Let $\mathcal{G}$ be a loopless multigraph $G$ with vertex set $V$ equipped with multisets $\{M_v:v \in V\}$ of integers. Call a packing of $G$ with stars a \emph{star $\mathcal{G}$-packing} if $M_v$ is the multiset of sizes of stars centred at $v$ for each $v \in V$. We say a function $f: V \rightarrow \mathbb{Z}$ such that $f(v) \in \{0,\ldots,|M_v|\}$ for each $v \in V$ is a \emph{restriction function} for $\mathcal{G}$. For a restriction function $f$ for $\mathcal{G}$, we define $\Delta^-_f(\mathcal{G})=\sum_{v \in V} \sigma_{f(v)}(M_v)$, $\Delta^+_f(\mathcal{G})=\sum_{\{u,v\} \in \binom{V}{2}}\min(f(u)+f(v),\mu_G(uv))$, and $\Delta_f(\mathcal{G})=\Delta^+_f(\mathcal{G})-\Delta^-_f(\mathcal{G})$. Finally, we define $\Delta(\mathcal{G})$ to be the minimum value of $\Delta_f(\mathcal{G})$ over all restriction functions $f$ for $\mathcal{G}$. We say a restriction function $f$ is \emph{minimal} if $\Delta_f(\mathcal{G})=\Delta(\mathcal{G})$. Considering the restriction function that is uniformly $0$, we always have $\Delta(\mathcal{G}) \leq 0$. Theorem~\ref{Theorem:RealTruthLambda} effectively states that a star $\mathcal{G}$-packing exists if and only if $\Delta(\mathcal{G}) = 0$.

Intuitively, $\Delta^-_f(\mathcal{G})$ is the number of edges in the sub-multigraph of $G$ induced by the largest $f(v)$ stars centred at each vertex $v \in V$ and $\Delta^+_f(\mathcal{G})$ is the number of edges in the multigraph obtained from $G$ by limiting the number of edges between $u$ and $v$ to $f(u)+f(v)$ for each $\{u,v\} \in \binom{V}{2}$. The former multigraph must be a sub-multigraph of the latter, and so the necessity of the condition $\Delta^-_f(\mathcal{G}) \leq \Delta^+_f(\mathcal{G})$ is obvious.

Theorem~\ref{Theorem:RealTruthLambda} has a consequence concerning tournaments. For a vertex set $V$, a \emph{$\lambda$-fold tournament on $V$} is a graph produced by orienting the edges of $\l K_V$. For a vertex $v$ of an oriented multigraph $G$, let $\deg^+_G(v)$ denote the number of edges that are incident with $v$ and oriented out from it, and let $N^+_G(v)$ denote the set of all $w \in V(G)$ for which there is at least one edge of $G$ oriented from $v$ to $w$.

In \cite{Landau} Landau characterised when there exists a $1$-fold tournament with a specified out-degree at each vertex. This result generalises easily to $\lambda$-fold tournaments (see \cite[Theorem 2.2.4]{Brualdi06} or \cite{BruFri15}, for example). Using Theorem~\ref{Theorem:RealTruthLambda}, we can prove a further generalisation to Landau's theorem in which we also specify a lower bound on the size of the out-neighbourhood of each vertex.

\begin{theorem}\label{Theorem:OutNeighbourhoodTournament}
Let $V$ be a set of $n$ vertices and let $a: V \rightarrow \mathbb{Z}$ and $b: V \rightarrow \mathbb{Z}$ be functions such that $a(v) \geq b(v) \geq 0$ for each $v \in V$ and $\sum_{v \in V}a(v)=\l\binom{n}{2}$. There exists a $\l$-fold tournament $T$ on $V$ such that $\deg^+_T(v) = a(v)$ and $|N^+_T(v)| \geq b(v)$ for each $v \in V$ if and only if, for each $k \in \{0,\ldots,n-1\}$,
\[\psi_k+\sum_{v \in V}b_k(v) \leq \tfrac{1}{2}\lambda k(2n-k-1),\]
where $b_k(v)=\max(0,b(v)-n+k+1)$ for each $v \in V$ and $\psi_k$ is the sum of the greatest $k$ elements of the multiset $\{a(v)-b_k(v): v \in V\}$.
\end{theorem}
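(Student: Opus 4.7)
The plan is to deduce Theorem~\ref{Theorem:OutNeighbourhoodTournament} from Theorem~\ref{Theorem:RealTruthLambda} by encoding the out-neighbourhood constraint at each vertex as a prescribed star size. For each $v \in V$ set
\[M_v = \begin{cases}\{b(v)\} \cup \{1\}^{a(v)-b(v)} & \text{if } b(v) \geq 1, \\ \{1\}^{a(v)} & \text{if } b(v)=0,\end{cases}\]
and let $\mathcal{G}$ denote $\lambda K_V$ equipped with these multisets. Since $\sum_v \sigma(M_v) = \sum_v a(v)=\lambda\binom{n}{2}$, every star $\mathcal{G}$-packing is in fact an edge-decomposition of $\lambda K_V$. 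Orienting each star outwards from its centre produces a $\lambda$-fold tournament $T$ with $\deg^+_T(v) = a(v)$, and because the star of size $b(v)$ at $v$ is simple we obtain $|N^+_T(v)| \geq b(v)$. Conversely, any such tournament induces a star $\mathcal{G}$-packing by choosing, at each $v$ with $b(v) \geq 1$, $b(v)$ distinct out-neighbours to form a size-$b(v)$ star and taking the remaining out-edges as $1$-stars. By Theorem~\ref{Theorem:RealTruthLambda} the required tournament therefore exists if and only if $\Delta(\mathcal{G}) = 0$.

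For necessity of the displayed inequalities, a direct edge count in any such tournament suffices: for any $k$-subset $S$, each $v \notin S$ has at most $n-k-1$ out-neighbours in $V \setminus (S \cup \{v\})$ and so must direct at least $b_k(v)$ of its out-edges into $S$; summing out-edges from $S$ and forced in-edges to $S$ gives $\sum_{v \in S} a(v) + \sum_{v \notin S} b_k(v) \leq \tfrac{1}{2}\lambda k(2n-k-1)$, the total number of edges of $\lambda K_n$ incident with $S$. Choosing $S$ to attain $\psi_k$ yields the displayed inequality for $k$.

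For sufficiency, I assume the displayed inequalities and must show $\Delta_f(\mathcal{G}) \geq 0$ for every restriction function $f$ for $\mathcal{G}$. Writing $c(v) = \max(b(v), 1)$, the map $j \mapsto \sigma_j(M_v)$ jumps by $c(v)$ at $j=1$ and by $1$ thereafter, so is discretely concave; the caps $\min(f(u)+f(v),\lambda)$ are likewise concave in each coordinate. The strategy is to apply a sequence of local moves $f(v) \to f(v) \pm 1$ that never increase $\Delta_f$, reducing to a canonical $f$ in which $f(v) = |M_v|$ on $S := \{v : f(v) \geq 1\}$, $f(v) = 1$ on $\{v \notin S : b_k(v) \geq 1\}$ (where $k = |S|$), and $f(v) = 0$ elsewhere. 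For such a canonical $f$, assuming $|M_v| \geq \lambda$ throughout $S$, a direct computation gives
\[\Delta_f(\mathcal{G}) = \tfrac{1}{2}\lambda k(2n-k-1) - \psi_k - \sum_{v \in V} b_k(v),\]
which is nonnegative by hypothesis.

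The main obstacle is carrying out this reduction through the boundary cases: vertices with $|M_v| < \lambda$ (arising when $b(v)$ is close to $a(v)$) alter the computation above, and vertices with $b_k(v) > |M_v|$ (arising when $b(v)$ is close to $n-1$) cannot have $f(v) = b_k(v)$ and must be placed into $S$ before the reduction begins. I expect such vertices to be absorbed naturally into $\psi_k$ by an appropriate initial choice of $S$, so that the local-move argument runs unimpeded on the remainder.
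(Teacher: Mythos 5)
Your reduction to star packings via the multisets $M_v$ and the equivalence between tournaments and star $\mathcal{G}$-decompositions is exactly the paper's approach (the paper factors it through an intermediate Lemma~\ref{Lemma:OutNeighbourhoodTournament} stated in terms of pairs of disjoint subsets $(A,B)$, but the underlying encoding is identical). Your necessity argument by direct edge-counting on a $k$-subset is a pleasant simplification over the paper's, which derives necessity from Theorem~\ref{Theorem:RealTruthLambda} by plugging in a specific restriction function; both are correct and give the same bound.

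The gap is in the sufficiency direction, and it is a real one that you have flagged yourself. You need to show a minimal restriction function can be brought to a canonical form where $f(v) \in \{0,1,|M_v|\}$, but the local-move argument you sketch only establishes this form under the additional proviso $|M_v| \geq \lambda$ for all $v$ with $f(v) = |M_v|$; without that proviso, the cap $\min(f(u)+f(v),\lambda)$ fails to saturate on edges inside $S$ and the identity $\Delta_f(\mathcal{G}) = \tfrac{1}{2}\lambda k(2n-k-1)-\psi_k-\sum_{v\in V}b_k(v)$ does not hold. This is precisely where the paper's proof does nontrivial work: after choosing the minimal $f$ to lexicographically maximise $(|f^{-1}(0)|,|f^{-1}(1)|,\dots)$ and applying Lemma~\ref{Lemma:RestrictionRestriction}(a) to get $f(v)\in\{0,1,|M_v|\}$ (and $f(v)\neq 1$ when $b(v)=0$), the paper separately argues by contradiction (picking a $w$ with $f(w)=|M_w|$ and $|M_w|$ minimal, and comparing $f$ with the function that decreases $f(w)$ by one) that $f(v)\neq|M_v|$ whenever $2\leq|M_v|\leq\lambda-1$ or $M_v=\{1\}$. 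Only after this does $A=\{v:f(v)\geq\lambda\}$, $B=\{v:f(v)=1\}$ give equality in both estimates \eqref{Equation:OutNeighbourhoodLemma1} and \eqref{Equation:OutNeighbourhoodLemma2}, and the hypothesis then yields $\Delta(\mathcal{G})\geq 0$. Your statement that you ``expect such vertices to be absorbed naturally into $\psi_k$'' is not a proof; without the extra step ruling out $|M_v|<\lambda$ on $S$, the computation of $\Delta_f$ simply fails, so this portion needs to be carried out explicitly.
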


The generalisation of Landau's result to $\l$-fold tournaments can be recovered from Theorem~\ref{Theorem:OutNeighbourhoodTournament} by setting $b(v)=0$ for each $v \in V$, noting that in this case $\sum_{v \in V}b_k(v)=0$ and $\psi_k$ is equal to the sum of the greatest $k$ elements of $\{a(v):v\in V\}$.

\section{Proof of Theorems~\ref{Theorem:RealTruthLambda} and \ref{Theorem:OutNeighbourhoodTournament}}

As discussed, Hoffman \cite{Hoffman04} obtained a characterisation for the existence of a decomposition of an arbitrary multigraph into uniform size stars, where the number of stars centred at each vertex is specified. His proof relies on constructing an equivalent network flow problem. We now extend this idea to prove Theorem~\ref{Theorem:RealTruthLambda}.

In the rest of the paper, we often use the exponential notation $\{x_1^{[e_1]},\ldots,x_t^{[e_t]}\}$ to describe multisets, where, for each $i \in \{1,\ldots,t\}$, $x_i^{[e_i]}$ stands for $e_i$ occurrences of $x_i$.

\begin{proof}[\textbf{\textup{Proof of Theorem~\ref{Theorem:RealTruthLambda}.}}]
Let $\mathcal{G}$ be the multigraph $G$ equipped with the multisets $\{M_v:v\in V\}$. Let $|I(\mathcal{G})|$ be the input size of $\mathcal{G}$ as discussed before the statement of Theorem~\ref{Theorem:RealTruthLambda}.

Let $S=\{(u,i):u \in V,i \in \{1,\ldots,|M_u|\}\}$ and $T=\{\{u,v\} \in \binom{V}{2}: \mu_G(uv) > 0\}$. For each $u \in V$, let $M_u=\{m_{u,1},\ldots,m_{u,|M_u|}\}$, and let $z=\sum_{(u,i) \in S}m_{u,i}$. We will establish an equivalence between packings of $G$ satisfying the conditions of Theorem~\ref{Theorem:RealTruthLambda} and integer flows of magnitude $z$ through the flow network $N$ composed of
\begin{itemize}
    \item
a source $a$ and a sink $b$;
    \item
an internal vertex $s_{(u,i)}$ for all $(u,i) \in S$;
    \item
an internal vertex $t_{\{u,v\}}$ for all $\{u,v\} \in T$;
    \item
an arc $as_{(u,i)}$ with capacity $m_{u,i}$ for all $(u,i) \in S$;
    \item
an arc $s_{(u,i)}t_{\{u,v\}}$ with capacity 1 for all $(u,i) \in S$ and $v \in V \setminus \{u\}$ such that $\{u,v\} \in T$;
    \item
an arc $t_{\{u,v\}}b$ with capacity $\mu_G(uv)$ for all $\{u,v\} \in T$.
\end{itemize}
With any integer flow of magnitude $z$ through $N$ we can associate a multiset of stars $\mathcal{P}=\{H_{u,i}:(u,i) \in S\}$ where, for $(u,i) \in S$, $H_{u,i}$ is a star centred at $u$ with $E(H_{u,i})=\{uv:\hbox{arc $s_{(u,i)}t_{\{u,v\}}$ has flow 1}\}$. Note that $\mathcal{P}$ is a packing of $G$ because, for each $\{u,v\} \in T$, the number of stars in $\mathcal{P}$ using an edge between $u$ and $v$ is exactly the flow through the arc $t_{\{u,v\}}b$ in $N$ and hence is at most $\mu_G(uv)$. Also, for each $(u,i) \in S$, $|E(H_{u,i})|=m_{u,i}$ because any flow of magnitude $z$ through $N$ must have flow exactly $m_{u,i}$ through arc $as_{(u,i)}$. Thus, $\mathcal{P}$ is a packing satisfying the conditions of Theorem~\ref{Theorem:RealTruthLambda}. Conversely any packing satisfying the conditions of Theorem~\ref{Theorem:RealTruthLambda} can be associated with an integer flow of magnitude $z$ through $N$.

Given this equivalence, it suffices to show that there is a flow of magnitude $z$ through $N$ if and only if the hypotheses of the theorem hold. Hence, by the max-flow min-cut theorem (and the integer flow theorem), it suffices to show that a minimum capacity cut of $N$ has capacity at least $z$ if and only if the hypotheses of the theorem hold. Note that establishing this will immediately provide the polynomial time algorithm whose existence the theorem asserts. This is because the number of vertices in $N$ is at most $2+|T|+\sum_{v \in V}|M_v|$ which is polynomial in $|I(\mathcal{G})|$ by \eqref{Equation:InputSize}, and it is well known that there is a polynomial time (in the number of vertices in the network) algorithm for finding an integral maximum flow in a network.

With each cut $(A^*,B^*)$ of $N$ where $a \in A^*$ and $b \in B^*$, we associate the restriction function $f^*:V \rightarrow \mathbb{Z}$ for $\mathcal{G}$ given by $f^*(u)=|\{i \in \{1,\ldots,|M_u|\}:s_{(u,i)} \in A\}|$. Now let $f:V \rightarrow \mathbb{Z}$ be a fixed restriction function for $\mathcal{G}$. Note that there is at least one cut of $N$ whose associated restriction function is $f$ and, of all such cuts, let $(A,B)$ be one of minimum capacity. The capacity of $(A,B)$ is
\begin{align*}
{}& \sum_{s_{(u,i)} \in A,\ t_{\{v,w\}} \in B}\capacity(s_{(u,i)}t_{\{v,w\}})+\sum_{t_{\{v,w\}} \in A}\capacity(t_{\{v,w\}}b)+\sum_{s_{(u,i)} \in B}\capacity(as_{(u,i)})\\[1mm]
={}& \sum_{t_{\{v,w\}} \in B}\big(f(v)+f(w)\big)+\sum_{t_{\{v,w\}} \in A}\mu_G(vw)+\sum_{s_{(u,i)} \in B}m_{u,i}\\[1mm]
={}& \left(\sum_{t_{\{v,w\}} \in B}\big(f(v)+f(w)\big)+\sum_{t_{\{v,w\}} \in A}\mu_G(vw)\right)+z-\sum_{s_{(v,i)} \in A}m_{v,i}\\[1mm]
={}& \sum_{\{v,w\} \in T}\min\big(f(v)+f(w),\mu_G(vw)\big)+z-\sum_{u \in V}\sigma_{f(u)}(M_u).
\end{align*}
The last equality follows because the minimality of $(A,B)$ implies that  $\{m_{u,i}:s_{(u,i)} \in A\}$ is the multiset of the $f(u)$ largest elements in $M_u$ for each $u \in V$ and that, for each $\{v,w\} \in T$, $t_{\{v,w\}} \in B$ if $f(v)+f(w) < \mu_G(vw)$ and $t_{\{v,w\}} \in A$ if $\mu_G(vw) < f(v)+f(w)$. So $(A,B)$ has capacity at least $z$ if and only if
\[\sum_{u \in V} \sigma_{f(u)}(M_u) \leq \sum_{\{v,w\} \in T}\min(f(v)+f(w),\mu_G(vw)).\]

So if this inequality holds for all restriction functions, then each cut of $N$ has capacity at least $z$. Conversely, if the inequality fails for some restriction function, then there is a cut of $N$ with capacity less than $z$.
\end{proof}

We next prove Lemma~\ref{Lemma:RestrictionRestriction}, which is a simple result on minimal restriction functions.

\begin{lemma}\label{Lemma:RestrictionRestriction}
Let $\mathcal{G}$ be a multigraph $G$ equipped with multisets $\{M_v:v\in V(G)\}$ of positive integers. Suppose there is a minimal restriction function $f_j$ for $\mathcal{G}$ such that $f_j(w)=j$ for some $w \in V(G)$ and integer $j$ such that $1\leq j\leq |M_w|-1$. Let $m_j$ and $m_{j+1}$ be the $j$th and $(j+1)$st largest elements of $M_w$ and, for each $i \in \{j-1,j+1\}$, let $f_i$ be the restriction function for $\mathcal{G}$ such that $f_i(v)=f_j(v)$ for each $v \in V(G) \setminus \{w\}$ and $f_i(w)=i$.
\begin{itemize}
    \item[\textup{(a)}]
If $m_{j+1}=m_j$, then $f_{j-1}$ is also minimal.
    \item[\textup{(b)}]
If $m_{j+1}=m_j-1$, then one of $f_{j-1}$ or $f_{j+1}$ is also minimal.
\end{itemize}
\end{lemma}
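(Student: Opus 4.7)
The plan is to compute $\Delta_{f_{j+1}}(\mathcal{G})-\Delta_{f_j}(\mathcal{G})$ and $\Delta_{f_{j-1}}(\mathcal{G})-\Delta_{f_j}(\mathcal{G})$ explicitly, and then exploit the minimality of $f_j$ to pin down these differences.

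For the $\Delta^-$ part this is immediate: since $m_j$ and $m_{j+1}$ are the $j$th and $(j+1)$st largest elements of $M_w$, and $f_j$ and $f_{j\pm 1}$ agree off $w$,
\[\Delta^-_{f_{j+1}}(\mathcal{G})-\Delta^-_{f_j}(\mathcal{G})=m_{j+1},\qquad \Delta^-_{f_{j-1}}(\mathcal{G})-\Delta^-_{f_j}(\mathcal{G})=-m_j.\]
For the $\Delta^+$ part, introduce
\[A=|\{v\in V(G)\setminus\{w\}:f_j(v)+j<\mu_G(wv)\}|,\qquad B=|\{v\in V(G)\setminus\{w\}:f_j(v)+j\leq \mu_G(wv)\}|,\]
so that $A\leq B$. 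A term-by-term inspection of $\min(f(u)+f(v),\mu_G(uv))$ (only the pairs containing $w$ change) gives
\[\Delta^+_{f_{j+1}}(\mathcal{G})-\Delta^+_{f_j}(\mathcal{G})=A,\qquad \Delta^+_{f_{j-1}}(\mathcal{G})-\Delta^+_{f_j}(\mathcal{G})=-B,\]
and therefore
\[\Delta_{f_{j+1}}(\mathcal{G})-\Delta_{f_j}(\mathcal{G})=A-m_{j+1},\qquad \Delta_{f_{j-1}}(\mathcal{G})-\Delta_{f_j}(\mathcal{G})=m_j-B.\]

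Because $f_j$ is minimal, both differences are nonnegative, so $m_{j+1}\leq A\leq B\leq m_j$. This chain of inequalities is the entire engine of the lemma. In case (a), $m_{j+1}=m_j$ forces $B=m_j$, so $\Delta_{f_{j-1}}(\mathcal{G})=\Delta_{f_j}(\mathcal{G})$ and $f_{j-1}$ is minimal. In case (b), $m_j-m_{j+1}=1$, so either $B=m_j$, giving $f_{j-1}$ minimal, or $B\leq m_j-1=m_{j+1}$; in the latter case $m_{j+1}\leq A\leq B\leq m_{j+1}$ collapses to $A=m_{j+1}$, so $\Delta_{f_{j+1}}(\mathcal{G})=\Delta_{f_j}(\mathcal{G})$ and $f_{j+1}$ is minimal.

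The only mildly delicate step is the careful bookkeeping of which edges $\{w,v\}$ switch from contributing $f(w)+f(v)$ to contributing $\mu_G(wv)$ (or vice versa) under the unit changes in $f(w)$; distinguishing the strict inequality that defines $A$ from the weak inequality that defines $B$ is what makes the two cases of the lemma split as they do. Once the formulas above are established, both parts fall out as a few lines of arithmetic on the inequality $m_{j+1}\leq A\leq B\leq m_j$.
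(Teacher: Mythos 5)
Your proof is correct and follows essentially the same route as the paper's: the paper defines increments $k_j=\Delta^+_{f_j}(\mathcal{G})-\Delta^+_{f_{j-1}}(\mathcal{G})$ and $k_{j+1}=\Delta^+_{f_{j+1}}(\mathcal{G})-\Delta^+_{f_j}(\mathcal{G})$, derives $m_{j+1}\leq k_{j+1}\leq k_j\leq m_j$ from minimality and the definition of $\Delta^+_f$, and concludes exactly as you do; your $A$ and $B$ are $k_{j+1}$ and $k_j$. The only difference is that you make the counts explicit as nested vertex sets (which transparently justifies $A\leq B$), whereas the paper leaves that step to the reader.
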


\begin{proof}
For each $i \in \{j,j+1\}$, observe that $\Delta^-_{f_i}(\mathcal{G}) = \Delta^-_{f_{i-1}}(\mathcal{G})+m_i$ and let $k_i$ be the integer such that $\Delta^+_{f_i}(\mathcal{G})=\Delta^+_{f_{i-1}}(\mathcal{G})+k_i$. Thus $\Delta_{f_i}(\mathcal{G}) = \Delta_{f_{i-1}}(\mathcal{G})+k_i-m_i$ for each $i \in \{j,j+1\}$. Because $f_j$ is minimal, $\Delta_{f_j}(\mathcal{G}) \leq \Delta_{f_i}(\mathcal{G})$ for each $i \in \{j-1,j+1\}$, and so $m_{j+1} \leq k_{j+1}$ and $k_{j} \leq m_{j}$. Now, it can be seen from the definition of $\Delta^+_f(\mathcal{G})$ that $k_{j+1} \leq k_j$, and so $m_{j+1} \leq k_{j+1} \leq k_{j} \leq m_{j}$. Thus, if $m_{j+1}=m_j$, then $k_j=m_j$ and (a) follows. Similarly (b) follows because if $m_{j+1}=m_j-1$, then $k_j=m_j$ or $k_{j+1}=m_{j+1}$.
\end{proof}

It is not immediately apparent that Hoffman's result \cite[Theorem 1]{Hoffman04} follows from Theorem~\ref{Theorem:RealTruthLambda}. However, by Lemma~\ref{Lemma:RestrictionRestriction}(a), in the case where all the prescribed star sizes are equal it suffices to consider only restriction functions such that $f(v) \in \{0,|M_v|\}$ for each vertex $v$, and so Theorem~\ref{Theorem:RealTruthLambda} reduces to Hoffman's theorem. To see that considering only such restriction functions in Theorem~\ref{Theorem:RealTruthLambda}  does not suffice in general, consider taking $\mathcal{G}$ to be $2K_{10}$ where two vertices are equipped with multisets $\{9,5\}$, four vertices with $\{9,1\}$, and the remaining four with $\{5\}$. The restriction function which takes values $2$, $1$ and $0$ at the first, second and third type of vertices respectively shows that a star $\mathcal{G}$-decomposition does not exist, but the same is not true for any restriction function $f$ such that $f(v) \in \{0,|M_v|\}$ for each vertex $v$.

We conclude this section by proving Theorem~\ref{Theorem:OutNeighbourhoodTournament} which we achieve with the help of the following lemma.

\begin{lemma}\label{Lemma:OutNeighbourhoodTournament}
Let $V$ be a set of $n$ vertices and let $a: V \rightarrow \mathbb{Z}$ and $b: V \rightarrow \mathbb{Z}$ be functions such that $a(v) \geq b(v) \geq 0$ for each $v \in V$. There exists a $\l$-fold tournament $T$ on $V$ such that $\deg^+_T(v) = a(v)$ and $|N^+_T(v)| \geq b(v)$ for each $v \in V$ if and only if, for all disjoint subsets $A$ and $B$ of $V$,
\begin{equation}
\sum_{v\in A}a(v)+\sum_{v\in B}b(v) \leq \tfrac{1}{2}\lambda |A|(2n-|A|-1)+|B|(n-|A|-1) \label{Equation:OutNeighbourhoodLemmaCond}
\end{equation}
with equality in the case $(A,B)=(V,\emptyset)$.
\end{lemma}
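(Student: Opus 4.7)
The plan is to apply Theorem~\ref{Theorem:RealTruthLambda} after encoding tournaments as star decompositions. For each $v \in V$, take $M_v = \{b(v), 1^{[a(v)-b(v)]}\}$ (omitting the leading $b(v)$ when $b(v) = 0$), so that $\lambda$-fold tournaments satisfying the conditions correspond bijectively to star $\mathcal{G}$-decompositions of $G = \lambda K_V$: a tournament maps to the decomposition at each vertex $v$ of its out-edges into one star of size $b(v)$ incident to $b(v)$ distinct out-neighbours plus $a(v) - b(v)$ singletons, and conversely orienting each star outward from its centre recovers a suitable tournament. By Theorem~\ref{Theorem:RealTruthLambda} it then suffices to show that the lemma inequality holds for every pair of disjoint subsets $A, B \subseteq V$ if and only if $\Delta_f(\mathcal{G}) \geq 0$ for every restriction function $f$.

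Necessity is by a direct counting argument. Given disjoint $A, B$ and a valid tournament $T$: for each $v \in B$, split its distinct out-neighbours into those in $A$ (count $x_v$) and those outside $A$ (count $y_v$), so that $\sum_{v \in B} y_v \leq |B|(n - |A| - 1)$. For each unordered pair $\{u, v\}$ with $u \in A$ and $v \in B$, the number of $u \to v$ edges plus the indicator of at least one $v \to u$ edge never exceeds $\lambda$, and summing this bound across all edges of $\lambda K_V$ incident with $A$ gives $\sum_{v \in A} a(v) + \sum_{v \in B} x_v \leq \tfrac{1}{2}\lambda|A|(2n - |A| - 1)$. Adding the two bounds yields the lemma inequality; the equality at $(V,\emptyset)$ is just the total out-degree count.

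For sufficiency we prove the contrapositive: if $\Delta_f(\mathcal{G}) < 0$ for some $f$, then the lemma fails at some $(A', B')$. By repeated use of Lemma~\ref{Lemma:RestrictionRestriction}(a), a minimal $f$ may be taken with $f(v) \in \{0, 1, |M_v|\}$ when $b(v) \geq 2$ and $f(v) \in \{0, |M_v|\}$ otherwise. Partition $V$ as $A_{\geq\lambda} \cup A_{<\lambda} \cup B \cup C$, where $A_{\geq\lambda} = \{v : f(v) = |M_v| \geq \lambda\}$, $A_{<\lambda} = \{v : f(v) = |M_v|, |M_v| < \lambda\}$, $B = \{v : f(v) = 1, |M_v| \geq 2\}$, and $C = \{v : f(v) = 0\}$, and set $(A', B') = (A_{\geq\lambda}, A_{<\lambda} \cup B)$. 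Unwinding $\Delta^+_f$ and $\Delta^-_f$ pair by pair gives the identity
\[\Delta_f(\mathcal{G}) = (R - L) + (|B| + |C| - 1)\delta + E,\]
where $L$ and $R$ denote the LHS and RHS of the lemma at $(A', B')$, $\delta = \sum_{v \in A_{<\lambda}}(|M_v| - 1) \geq 0$, and $E = \sum_{\{u, v\} \in \binom{A_{<\lambda}}{2}}(\min(|M_u| + |M_v|, \lambda) - 2) \geq 0$. When $|B| + |C| \geq 1$ all three summands on the right are nonnegative, and so the lemma's hypothesis at $(A', B')$ immediately yields $\Delta_f \geq 0$.

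The main obstacle is the degenerate case $B = C = \emptyset$, i.e.\ $V = A_{\geq\lambda} \cup A_{<\lambda}$, where the coefficient of $\delta$ is $-1$. Here a direct computation gives $R - L = \delta - (\lambda - 2)\binom{q}{2}$ and hence $\Delta_f = E - (\lambda - 2)\binom{q}{2}$, with $q := |A_{<\lambda}|$. Thus $\Delta_f < 0$ forces some pair $\{u, v\} \subseteq A_{<\lambda}$ to satisfy $|M_u| + |M_v| < \lambda$; it follows that $\delta \leq (\lambda - 3) + (q - 2)(\lambda - 2)$, and a routine estimate shows this is strictly less than $(\lambda - 2)\binom{q}{2}$ for every $q \geq 2$. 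Hence $R < L$, the lemma fails at $(A', B')$, and the contrapositive is complete.
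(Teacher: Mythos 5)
Your encoding of tournaments as star decompositions and your direct counting argument for necessity are both sound. The sufficiency argument, however, has a concrete gap in the identity for $\Delta_f$. For a vertex $v \in A_{<\lambda}$ with $b(v)=0$ one has $M_v=\{1^{[a(v)]}\}$, hence $\sigma_{f(v)}(M_v)-b(v)=a(v)=|M_v|$ rather than $|M_v|-1$; consequently the correct identity is
\[\Delta_f(\mathcal{G}) = (R-L) + (|B|+|C|-1)\delta + E - c_0, \qquad c_0 := |\{v\in A_{<\lambda} : b(v)=0\}|,\]
with the extra nonpositive term $-c_0$ that your proposal omits. This breaks the argument in both cases. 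When $|B|+|C|\geq 1$, the nonnegativity of $R-L$, $\delta$ and $E$ no longer forces $\Delta_f\geq 0$. In the case $B=C=\emptyset$ your stated formula $R-L=\delta-(\lambda-2)\binom{q}{2}$ is similarly off by $+c_0$ (the two errors cancel in the derived $\Delta_f = E-(\lambda-2)\binom{q}{2}$, which is why that equation is nevertheless correct), and the chain from $\Delta_f<0$ to $R<L$ does not go through. For a concrete failure, take $\lambda=5$, $n=3$, $a=(2,2,11)$, $b=(0,0,0)$. Then $f=(2,2,11)$ is minimal (with $\Delta_f=-1$) and satisfies your characterisation $f(v)\in\{0,|M_v|\}$, giving $A'=\{v_3\}$, $B'=\{v_1,v_2\}$, $L=11$, $R=12$: the inequality \emph{holds} at $(A',B')$, so your chosen witness pair does not in fact witness failure (though of course some pair, here $(\{v_3\},\emptyset)$, does).

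The missing ingredient is precisely what the paper proves but Lemma~\ref{Lemma:RestrictionRestriction}(a) alone cannot deliver: a minimal restriction function can be chosen (by maximising $(|f^{-1}(0)|,|f^{-1}(1)|,\ldots)$ lexicographically) with the additional property that $f(v)\neq |M_v|$ whenever $2\leq |M_v|\leq \lambda-1$ or $M_v=\{1\}$. Establishing this requires the argument in the paper's proof — if some offending $w$ exists, either $\Delta^+_f = \lambda\binom{n}{2}$ (forcing $f\equiv 0$, a contradiction) or one can decrement $f$ at $w$ to a lexicographically better minimal function. With that refinement in hand, every $v$ with $f(v)=|M_v|<\lambda$ has $|M_v|=1$ and $b(v)\geq 2$, so $c_0=0$ and the rest of your computation is salvageable (indeed it then essentially reproduces the paper's choice $A=\{v:f(v)\geq\lambda\}$, $B=\{v:f(v)=1\}$). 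As written, though, the proposal leaves a genuine hole precisely where the paper does the most delicate work.
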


\begin{proof}
We may assume that $\sum_{v\in V}a(v) = \tfrac{1}{2}\lambda n(n-1)$ for otherwise the condition of the lemma does not hold when $(A,B)=(V,\emptyset)$ and clearly there is no $\l$-fold tournament $T$ on $V$ such that $\deg^+_T(v) = a(v)$ for each $v \in V$.

For each $v \in V$, let $M_v={\{b(v),1^{[a(v)-b(v)]}\}}$ if $b(v) \geq 2$ and $M_v={\{1^{[a(v)]}\}}$ if $b(v) \in \{0,1\}$. By considering the edges of stars to be oriented outward from their centres, it can be seen that a $\l$-fold tournament $T$ on $V$ such that $\deg^+_T(v) = a(v)$ and $|N^+_T(v)| \geq b(v)$ for each $v \in V$ exists if and only if a star $\mathcal{K}$-decomposition exists, where $\mathcal{K}$ is $\l K_V$ equipped with the multisets $\{M_v:v \in V\}$.

Observe that if $A$ and $B$ are disjoint subsets of $V$ and $f$ is a restriction function for $\mathcal{K}$ such that $f(v)=|M_v|$ for each $v \in A$, $f(v)=1$ for each $v \in B$ and $f(v)=0$ for each $v \in V \setminus (A\cup B)$, then
\begin{align}
\Delta^+_f(\mathcal{K}) &\leq
\lambda\mbinom{|A|}{2}+\lambda |A| (n-|A|)+2\mbinom{|B|}{2}+|B|(n-|A|-|B|) \label{Equation:OutNeighbourhoodLemma1}\\
\Delta^-_f(\mathcal{K}) &\geq\medop\sum_{v\in A}a(v)+\medop\sum_{v\in B}b(v) \label{Equation:OutNeighbourhoodLemma2}.
\end{align}
Furthermore, we have equality in \eqref{Equation:OutNeighbourhoodLemma1} if $|M_v| \geq \lambda$ for each $v \in A$ and we have equality in \eqref{Equation:OutNeighbourhoodLemma2} if $b(v) \neq 0$ for each $v \in B$. Note also that the right hand side of \eqref{Equation:OutNeighbourhoodLemma2} is equal to the left hand side of \eqref{Equation:OutNeighbourhoodLemmaCond} and, by routine calculation, the right hand side of \eqref{Equation:OutNeighbourhoodLemma1} is equal to the right hand side of \eqref{Equation:OutNeighbourhoodLemmaCond}.

To prove the `only if' direction, suppose there are disjoint subsets $A$ and $B$ of $V$ for which \eqref{Equation:OutNeighbourhoodLemmaCond} fails. Let $f$ be a restriction function for $\mathcal{K}$ such that $f(v)=|M_v|$ for each $v \in A$, $f(v)=1$ for each $v \in B$ and $f(v)=0$ for each $v \in V \setminus (A\cup B)$. Then by \eqref{Equation:OutNeighbourhoodLemma1}, \eqref{Equation:OutNeighbourhoodLemma2} and the failure of \eqref{Equation:OutNeighbourhoodLemmaCond}, we have $\Delta^-_f(\mathcal{K}) > \Delta^+_f(\mathcal{K})$ and hence that no star $\mathcal{K}$-decomposition exists by Theorem~\ref{Theorem:RealTruthLambda}.

To prove the `if' direction, suppose that \eqref{Equation:OutNeighbourhoodLemmaCond} holds for all disjoint subsets $A$ and $B$ of $V$. Of all the minimal restriction functions for $\mathcal{K}$, let $f$ be one that maximises the sequence $(|f^{-1}(0)|,|f^{-1}(1)|,\ldots)$ lexicographically. We claim that, for each $v \in V$, $f(v) \in \{0,1,|M_v|\}$, $f(v) \neq |M_v|$ if $2 \leq |M_v| \leq \l-1$, and $f(v) \neq 1$ if $b(v)=0$. Proving this claim will suffice to prove the lemma because, if we set $A=\{v:f(v) \geq \l\}$ and $B=\{v:\mbox{$f(v)=1$}\}$, then \eqref{Equation:OutNeighbourhoodLemma1} and \eqref{Equation:OutNeighbourhoodLemma2} hold with equality, and hence $\Delta(\mathcal{K})=\Delta^+_f(\mathcal{K}) - \Delta^-_f(\mathcal{K}) \geq 0$ because \eqref{Equation:OutNeighbourhoodLemmaCond} holds. By Lemma~\ref{Lemma:RestrictionRestriction}(a) it can be seen that, for each $v \in V$, $f(v) \in \{0,1,|M_v|\}$,  and $f(v) \neq 1$ if $a(v) \geq 2$ and $b(v) =0$. We will complete the proof by showing that in addition, for each $v \in V$, $f(v) \neq |M_v|$ if $2 \leq |M_v| \leq \l-1$ or if $M_v=\{1\}$.

Suppose otherwise and, of all the elements of
\[\{v \in V: \mbox{$f(v)=|M_v|$ and either $2 \leq f(v) \leq \l-1$ or $M_v=\{1\}$}\},\]
let $w$ be one with a minimum value of $|M_w|$.
If $f(v) \geq \l-|M_w|+1$ for each $v\in V\setminus \{w\}$, then $f(v) \geq \max (|M_w|,\lambda-|M_w|+1)$ for all $v\in V \setminus \{w\}$ by the definition of $w$ and we can conclude successively that $\Delta^+_f(\mathcal{K})=\lambda \binom{n}{2}$, that $\Delta_f(\mathcal{K})=0$  (because $\Delta^-_f(\mathcal{K}) \leq \lambda \binom{n}{2}$ and we always have $\Delta(\mathcal{K}) \leq 0$), and the contradiction that $f$ is uniformly 0 (recall that $f(w)=|M_w|>0$).
So there is some $u\in V\setminus \{w\}$ such that $f(u) \leq \l-|M_w|$. Let $f_1$ be the restriction function such that $f_1(v) = f(v)$ for each $v \in V \setminus \{w\}$ and $f_1(w)=|M_w|-1$. Then $\Delta^+_{f_1}(\mathcal{K}) \leq \Delta^+_{f}(\mathcal{K})-1$ because $u$ exists and $\Delta^-_{f_1}(\mathcal{K})=\Delta^-_{f}(\mathcal{K})-1$ by our definitions of $\mathcal{K}$ and $w$. Thus $\Delta_{f_1}(\mathcal{K}) \leq \Delta_{f}(\mathcal{K})$, $|f_1^{-1}(i)|=|f^{-1}(i)|$ for each $i \in \{0,\ldots,|M_w|-2\}$ and $|f_1^{-1}(|M_w|-1)|>|f^{-1}(|M_w|-1)|$, which contradicts our definition of $f$.
\end{proof}

\begin{proof}[\textup{\textbf{Proof of Theorem~\ref{Theorem:OutNeighbourhoodTournament}}}]
In Lemma~\ref{Lemma:OutNeighbourhoodTournament}, for a fixed choice of $A$, it can be seen that the inequality is tightest when $B=\{v \in V \setminus A: b(v) \geq n-k\}$ where $k=|A|$. Thus, for a fixed choice of $A$ and $k=|A|$, a tournament satisfying the conditions of Lemma~\ref{Lemma:OutNeighbourhoodTournament} exists if and only if
\[
\sum_{v\in A}a(v) + \sum_{v \in V \setminus A}b_k(v) \leq \tfrac{1}{2}\lambda k(2n-k-1),
\]
or equivalently,
\begin{equation}\label{Equation:OutNeighbourhoodTournament2}
\sum_{v\in A}(a(v)-b_k(v))+\sum_{v \in V}b_k(v) \leq \tfrac{1}{2}\lambda k(2n-k-1).
\end{equation}
Now, for a fixed choice $k$ of $|A|$, the right hand side and the second sum on the left hand side in \eqref{Equation:OutNeighbourhoodTournament2} are constant and the maximum value of the first sum on the left hand side is exactly $\psi_k$. The result follows.
\end{proof}

\section{Theorem~\ref{Theorem:NPComplete} proof strategy}\label{Section:Strategy}

We begin this section with two very simple results that will be useful in the proof of Theorem~\ref{Theorem:NPComplete}.

\begin{lemma}\label{Lemma:bloodyObvious}
Let $m_1,\ldots,m_t,x,y$ be positive integers. If there is a packing of a multigraph $G$ with stars of sizes $m_1,\ldots,m_t,x+y$ then there is a packing of $G$ with stars of sizes $m_1,\ldots,m_t,x,y$.
\end{lemma}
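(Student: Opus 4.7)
The plan is to argue by a direct edge-splitting construction rather than invoking Theorem~\ref{Theorem:RealTruthLambda}. Suppose $\mathcal{P}$ is a packing of $G$ with stars of sizes $m_1,\ldots,m_t,x+y$, and let $H \in \mathcal{P}$ be the star of size $x+y$. Since $x+y \geq 2$, the star $H$ has a unique centre $v$, and $E(H)$ consists of $x+y$ edges each incident with $v$. I would then partition $E(H)$ into two disjoint subsets $E_x$ and $E_y$ of sizes $x$ and $y$ respectively, and define $H_x$ and $H_y$ to be the stars centred at $v$ with edge sets $E_x$ and $E_y$ (designating $v$ as the centre in the $1$-star case, which the paper allows).

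The replacement packing is $\mathcal{P}' = (\mathcal{P}\setminus\{H\})\cup\{H_x,H_y\}$. To verify it is a packing of $G$, I would check that for every pair $\{u,w\} \in \binom{V(G)}{2}$,
\[
\sum_{H' \in \mathcal{P}'}\mu_{H'}(uw) = \sum_{H' \in \mathcal{P}}\mu_{H'}(uw) \leq \mu_G(uw),
\]
which is immediate because the total number of edges between $u$ and $w$ contributed by $H_x$ and $H_y$ together equals the number contributed by $H$. The sizes of the stars in $\mathcal{P}'$ are exactly $m_1,\ldots,m_t,x,y$, as required.

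There is no real obstacle here; the only minor point worth flagging is the convention for $1$-stars. If $x=1$ (or $y=1$), then $E_x$ (resp.\ $E_y$) is a single edge $vu$, and we must designate $v$ as the centre of this $1$-star so that the resulting object is genuinely a star in the sense used throughout the paper. This is always possible since the centre of a $1$-star is chosen by us.
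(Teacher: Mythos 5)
Your proof is correct and follows essentially the same approach as the paper's: replace the star of size $x+y$ by two stars on a partition of its edge set into parts of sizes $x$ and $y$. You simply spell out the packing verification and the $1$-star centre convention, which the paper leaves implicit.
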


\begin{proof}
Begin with the packing of $G$ with stars of sizes $m_1,\ldots,m_t,x+y$ and replace a star $H$ of size $x+y$ with two stars $H_1$ and $H_2$ such that $|E(H_1)|=x$, $|E(H_2)|=y$ and $\{H_1,H_2\}$ is a decomposition of $H$.
\end{proof}

Call a multigraph $G$ a \emph{multistar} if $|V(G)| \geq 2$, $G$ is connected and $G$ has some vertex $c$ with which every edge is incident. For $|V(G)| \geq 3$, this vertex is unique and is called the \emph{centre} of the multistar. When $|V(G)|=2$ we assume that one of the vertices is designated as the centre.

\begin{lemma}\label{Lemma:multistarDecomp}
Let $G$ be a multistar with centre $c$ and let $m_1,\ldots,m_t$ be positive integers such that $m_1 \geq \cdots \geq m_t$. There is a packing of $G$ with stars of sizes $m_1,\ldots,m_t$ if and only if, for each $s \in \{1,\ldots,t\}$,
\[\sum_{i=1}^{s}m_i \leq \sum_{v \in V(G) \setminus \{c\}}\min(s,\mu_G(cv)).\]
\end{lemma}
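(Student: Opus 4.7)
The plan is to derive this lemma as a direct corollary of Theorem~\ref{Theorem:RealTruthLambda}. The key observation is that, because $G$ is a multistar with centre $c$, every edge of $G$ is incident with $c$; hence any star of size at least $2$ that is a sub-multigraph of $G$ must be centred at $c$, and any $1$-star in such a packing can be assigned $c$ as its designated centre (using the convention established in the introduction). So packings of $G$ with stars of sizes $m_1,\ldots,m_t$ are in bijection with star $\mathcal{G}$-packings, where $\mathcal{G}$ is $G$ equipped with the multisets $M_c=\{m_1,\ldots,m_t\}$ and $M_v=\emptyset$ for all $v\in V(G)\setminus\{c\}$.

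With this setup, I would then unpack what Theorem~\ref{Theorem:RealTruthLambda} says in this special case. Any restriction function $f$ for $\mathcal{G}$ must have $f(v)=0$ for every $v\neq c$ (since $|M_v|=0$ there), while $f(c)=s$ for some $s\in\{0,1,\ldots,t\}$. For such an $f$ we have $\Delta^-_f(\mathcal{G})=\sigma_s(M_c)=\sum_{i=1}^{s}m_i$, and because every pair $\{u,v\}\in\binom{V(G)}{2}$ not containing $c$ satisfies $f(u)+f(v)=0$ and hence contributes $0$ to $\Delta^+_f(\mathcal{G})$, we get $\Delta^+_f(\mathcal{G})=\sum_{v\in V(G)\setminus\{c\}}\min(s,\mu_G(cv))$. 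The condition $\Delta^-_f(\mathcal{G})\leq\Delta^+_f(\mathcal{G})$ is automatic when $s=0$ and, for $s\in\{1,\ldots,t\}$, is exactly the inequality stated in the lemma.

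I do not anticipate any serious obstacle: the whole argument is a straightforward specialisation, with the only point to watch being the convention for centres of $1$-stars (so that the bijection between packings and star $\mathcal{G}$-packings is genuine). Once that is noted, both directions of the lemma fall out of Theorem~\ref{Theorem:RealTruthLambda} simultaneously, since the restriction functions parametrise precisely the inequalities indexed by $s\in\{1,\ldots,t\}$.
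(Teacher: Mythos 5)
Your proposal is correct and takes essentially the same route as the paper: the paper's proof notes that every star of size at least two must be centred at $c$, assumes WLOG that $1$-stars are also centred at $c$, and then invokes Theorem~\ref{Theorem:RealTruthLambda} as a specialisation. You have simply spelled out the details of that specialisation (the form of the restriction functions and the computation of $\Delta^\pm_f$) that the paper leaves implicit.
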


\begin{proof}
In any packing of $G$ with stars of sizes $m_1,\ldots,m_t$, each star of size greater than one must be centred at $c$ and we may assume without loss of generality that each star of size 1 is centred at $c$. The lemma is now a specialisation of Theorem~\ref{Theorem:RealTruthLambda}.
\end{proof}

In Sections~\ref{Section:LambdaOdd} and \ref{Section:LambdaEven} we will prove Theorem~\ref{Theorem:NPComplete} in the cases where $\lambda$ is odd and even, respectively. Here we discuss our overall proof strategy. Theorem~\ref{Theorem:RealTruthLambda} is our main tool in proving Theorem~\ref{Theorem:NPComplete}.  For each parity of $\lambda$, we first show that \textsc{$(\lambda,\alpha)$-star decomp} is $\mathsf{NP}$-complete when $\alpha>\alpha'$, and then show that, when $\alpha \leq\alpha'$, {every instance of \textsc{$(\lambda,\alpha)$-star decomp} is feasible.}{} {The plethora of possible restriction functions can be an obstacle to exploiting Theorem~\ref{Theorem:RealTruthLambda}. To deal with this we show that, when the multisets assigned to the vertices of $\l K_n$ are well-behaved in certain ways, there must be a minimal restriction function of a particular form (see Lemmas~\ref{Lemma:MinimalFProperties} and \ref{Lemma:MinimalFProperties1LambdaEven}).}{}

To establish the $\mathsf{NP}$-completeness of \textsc{$(\lambda,\alpha)$-star decomp} when $\alpha>\alpha'$ we will reduce to it from the decision problem \textsc{3-partition}.
\begin{description}[itemsep=0mm,parsep=0mm,topsep=1mm]
    \item[\textmd{\textsc{3-partition}}]

    \item[\textit{\textmd{Instance:}}]
A multiset $\{a_1,\ldots,a_{3q}\}$ of positive integers such that $a=\frac{1}{q}(a_1+\cdots+a_{3q})$ is an integer and $\frac{a}{4} < a_i < \frac{a}{2}$ for each $i \in \{1,\ldots,3q\}$.

    \item[\textit{\textmd{Question:}}]
Is there a partition of $\{a_1,\ldots,a_{3q}\}$ into $q$ classes such that the elements of each class sum to $a$?
\end{description}
It is known that \textsc{3-partition} is $\mathsf{NP}$-complete in the strong sense; that is, it remains $\mathsf{NP}$-complete even when $qa$ is bounded by a polynomial in the length of its input (see \cite[Theorem~4.2]{GarJoh}). This fact means that it suffices for us to reduce from it to an instance of \textsc{$(\lambda,\alpha)$-star decomp} whose input size is polynomial in $qa$.

Our strategy for showing that {every instance of \textsc{$(\lambda,\alpha)$-star decomp} is feasible} when $\alpha \leq\alpha'$ is as follows. We first set an upper bound $m$ on the star size, where $m$ is equal to (or slightly larger than) $\lfloor \alpha'(n-1) \rfloor$. We note that by Lemma~\ref{Lemma:bloodyObvious}, we may assume that any two distinct specified star sizes sum to more than $m$. Next, we assign centre vertices to the specified star sizes, resulting in a multiset $M_v$ of star sizes to be centred at each vertex $v$ of our complete multigraph $\l K_n$. We then ``compress'' each multiset $M_v$ into a new multiset $M^*_v$ such that  $\sigma(M^*_v)=\sigma(M_v)$ and $\sigma_i(M^*_v) \geq \sigma_i(M_v)$ for each $i \in \{1,\ldots,|M^*_v|\}$. Let $\mathcal{K}$ be $\l K_n$ equipped with the multiset $M^*_v$ at each vertex $v$. It follows by Lemma~\ref{Lemma:multistarDecomp} that, in a star $\mathcal{K}$-decomposition, the multistar induced by the stars centred at any vertex $v$ has a decomposition into stars of sizes given by the elements of $M_v$. Thus it suffices to show there exists a star $\mathcal{K}$-decomposition. Finally, we apply Theorem~\ref{Theorem:RealTruthLambda}. The compression ensures that the assigned multisets are well-behaved and hence, as discussed, the existence of a minimal restriction function of a particular form. Using this, we are able to conclude that $\Delta(\mathcal{K}) \geq 0$ and hence that the desired decomposition exists. Ensuring that distinct star sizes sum to more than $m$, compressing multisets, and applying Theorem~\ref{Theorem:RealTruthLambda} to construct a suitable decomposition can all be completed in polynomial time in $n$. We will show that the same is true for the procedures by which we assign the star sizes to the vertices.

We introduce some notation related to multisets that will be used throughout the rest of the paper. For a multiset $M$ of positive integers and a positive integer $x$ we define $\nu_x(M)$ to be the number of elements of $M$ equal to $x$, and for a set $S$ of positive integers we define $\nu_S(M)=\sum_{x \in S}\nu_x(M)$. For multisets $M$ and $N$ of positive integers, we say $M \subseteq N$ if $\nu_{x}(M) \leq \nu_{x}(N)$  for all positive integers $x$, and we define $M \uplus N$ and $M \setminus N$ so that, for all positive integers $x$, $\nu_{x}(M \uplus N)=\nu_{x}(M)+\nu_{x}(M)$ and $\nu_{x}(M \setminus N)=\max(0,\nu_{x}(M)-\nu_{x}(N))$.

We now give two technical lemmas which will be useful in Sections~\ref{Section:LambdaOdd} and \ref{Section:LambdaEven}. Lemma~\ref{Lemma:partitionV} is used in establishing the \textsf{NP}-completeness of \textsc{$(\lambda,\alpha)$-star decomp} when $\alpha>\alpha'$ whereas Lemma~\ref{Lemma:dualThing} is used in proving that every instance of  \textsc{$(\lambda,\alpha)$-star decomp} is feasible when $\alpha \leq \alpha'$.

\begin{lemma}\label{Lemma:partitionV}
Let $\mathcal{K}$ be a complete multigraph $\lambda K_V$ equipped with multisets $\{M_v: v \in V\}$ of positive integers and let $n=|V|$. Let $\{V',V''\}$ be a partition of $V$ such that $|V'|=q$, let $M'=\biguplus_{v \in V'}M_v$ and let $M''=\biguplus_{v \in V''}M_v$. If there is a star $\mathcal{K}$-decomposition, then
\begin{itemize}
    \item[\textup{(a)}]
$\sum_{x \in M''}(x-q) \leq \lambda\binom{n-q}{2}$;
    \item[\textup{(b)}]
if $\sum_{x \in M''}(x-q) = \lambda\binom{n-q}{2}$, then $\sigma(M_u) \leq \lambda(n-1)-|M''|$ for each $u \in V'$.
\end{itemize}
\end{lemma}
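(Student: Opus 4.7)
\textbf{Proof plan for Lemma~\ref{Lemma:partitionV}.} My plan is to analyse how the edges of $\lambda K_V$ are partitioned among the stars of a fixed star $\mathcal{K}$-decomposition $\mathcal{D}$, splitting them according to whether they lie entirely in $V''$ or are incident with $V'$.

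For (a), I would first observe that each star in $\mathcal{D}$ centred at a vertex $v \in V''$ is a simple graph with distinct leaves, so of its $x$ edges at most $q = |V'|$ can have an endpoint in $V'$, leaving at least $\max(0,x-q) \geq x-q$ edges with both endpoints in $V''$. Summing over all stars centred in $V''$, at least $\sum_{x \in M''}(x-q)$ edges of $\mathcal{D}$ lie in the sub-multigraph $\lambda K_{V''}$. On the other hand, every edge of $\lambda K_{V''}$ must be covered by exactly one star of $\mathcal{D}$, and that star is necessarily centred in $V''$ (its centre being one of its endpoints); so the edges of $\mathcal{D}$ lying entirely in $V''$ number exactly $\lambda\binom{n-q}{2}$. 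Combining these yields (a).

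For (b), I would show that equality in (a) forces every star size $x \in M''$ to satisfy $x \geq q$ and, more importantly, each star in $\mathcal{D}$ centred at a vertex of $V''$ to use exactly one edge from its centre to each vertex of $V'$. Given this, fix $u \in V'$: the stars centred in $V''$ consume exactly $|M''|$ of the $\lambda(n-q)$ edges of $\mathcal{K}$ between $u$ and $V''$, so the stars in $\mathcal{D}$ centred at $u$ use at most $\lambda(n-q) - |M''|$ edges from $u$ to $V''$. They also use at most $\lambda(q-1)$ edges from $u$ to $V' \setminus \{u\}$. Since $\sigma(M_u)$ is the total number of edges in stars centred at $u$, summing the two bounds gives $\sigma(M_u) \leq \lambda(n-1) - |M''|$, as required.

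The argument is essentially a double-counting one, so I do not anticipate any serious obstacle. The most delicate step is the upgrade in (b) from ``at most $q$ leaves in $V'$'' to ``exactly $q$ leaves in $V'$'' for each star centred in $V''$ under the assumption of equality in (a); this is what converts the upper bound on edges between $u$ and $V''$ contributed by $V''$-centred stars into an exact count of $|M''|$, which is precisely what the conclusion of (b) requires.
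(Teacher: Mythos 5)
Your proof is correct, but it takes a genuinely different route from the paper's. The paper deduces both (a) and (b) from Theorem~\ref{Theorem:RealTruthLambda} by choosing restriction functions: for (a) it takes $f \equiv 0$ on $V'$ and $f(v)=|M_v|$ on $V''$, bounds $\Delta^+_f(\mathcal{K})$ from above, and rearranges; for (b) it takes $f \equiv 0$ on $V'\setminus\{u\}$ and $f(v)=|M_v|$ elsewhere and then subtracts off the equality hypothesis. Your argument is instead a direct double count on the decomposition itself, never invoking Theorem~\ref{Theorem:RealTruthLambda}: for (a) you compare the at-least-$(x-q)$ edges that each $V''$-centred star leaves inside $V''$ against the exact total $\lambda\binom{n-q}{2}$ of edges within $V''$, all of which must be claimed by $V''$-centred stars; for (b) you observe that equality forces every $V''$-centred star to take exactly one edge to each vertex of $V'$, which pins down the number of edges available at any $u \in V'$. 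The rigidity step you flag as delicate is indeed the crux, and it works as you describe: equality forces $\max(0,x-q)=x-q$ term by term, hence $x\geq q$ and exactly $q$ leaves in $V'$ per $V''$-centred star. The trade-off: the paper's route is uniform with the rest of the paper's machinery (and gives slightly more, since the intermediate inequality \eqref{Equation:partitionV} holds for arbitrary $S$), whereas your argument is more elementary and self-contained, making the combinatorial content of the lemma transparent without appeal to the max-flow characterisation.
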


\begin{proof}
Let $S$ be a subset of $V$. Applying Theorem~\ref{Theorem:RealTruthLambda} with $f(x)=0$ for each $x \in S$ and $f(v)=|M_{v}|$ for each $v \in V \setminus S$, we have that
\begin{equation}\label{Equation:partitionV}
\sum_{v \in V \setminus S}\sigma(M_{v}) \leq \Delta^+_f(\mathcal{K}) \leq \lambda\mbinom{n-|S|}{2}+|S|\sum_{v \in V \setminus S}\min(\lambda,|M_{v}|).
\end{equation}
Now (a) follows by setting $S=V'$ in \eqref{Equation:partitionV}, using $\min(\lambda,|M_{v}|) \leq |M_v|$ for each $v \in V''$, and subtracting $q|M''|$ from each side of the inequality.

For each $u \in V'$, by \eqref{Equation:partitionV} with $S=V'\setminus \{u\}$, using $\min(\lambda,|M_{v}|) \leq |M_v|$ for each $v \in V''$ and $\min(\lambda,|M_{u}|) \leq \lambda$,
\begin{equation}\label{Equation:partitionV2}
\sum_{v \in V'' \cup \{u\}}\sigma(M_v) \leq  \lambda\mbinom{n-q+1}{2}+(q-1)\lambda+(q-1)|M''|.
\end{equation}
If $\sum_{x \in M''}(x-q) = \lambda\binom{n-q}{2}$, then $\sigma(M'') =\lambda\binom{n-q}{2}+ q|M''|$. Now (b) follows by subtracting this equation from \eqref{Equation:partitionV2}.
\end{proof}

\begin{lemma}\label{Lemma:dualThing}
Let $\mathcal{K}$ be a complete multigraph $\lambda K_V$ equipped with multisets $\{M_v: v \in V\}$ of positive integers such that $\sum_{v \in V}\sigma(M_v)=\l\binom{n}{2}$, and let $f$ be a restriction function for $\mathcal{K}$ such that $\Delta_f(\mathcal{K}) < 0$. Then
\[\sum_{v \in V} (\sigma(M_v)-\sigma_{f(v)}(M_v))=\l\mbinom{n}{2}-\Delta^-_f(\mathcal{K}) < \sum_{\{u,v\} \in \binom{V}{2}} \max(\l-f(u)-f(v),0).\]
\end{lemma}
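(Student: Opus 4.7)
The plan is to observe that the equality is immediate from the hypothesis and the strict inequality reduces to a simple identity combined with the assumption $\Delta_f(\mathcal{K}) < 0$.

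First, since $\sum_{v \in V} \sigma(M_v) = \l\binom{n}{2}$ and $\Delta^-_f(\mathcal{K}) = \sum_{v \in V} \sigma_{f(v)}(M_v)$ by definition, the equality
\[\sum_{v \in V}\big(\sigma(M_v)-\sigma_{f(v)}(M_v)\big) = \l\mbinom{n}{2}-\Delta^-_f(\mathcal{K})\]
is immediate.

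The main step is to rewrite $\Delta^+_f(\mathcal{K})$ in a useful way. Because $G = \l K_V$, we have $\mu_G(uv) = \l$ for every $\{u,v\} \in \binom{V}{2}$, and so $\Delta^+_f(\mathcal{K}) = \sum_{\{u,v\}\in\binom{V}{2}} \min(f(u)+f(v),\l)$. For any nonnegative integers $f(u)$ and $f(v)$ one has the identity
\[\min(f(u)+f(v),\l)+\max(\l-f(u)-f(v),0)=\l,\]
which I would verify by splitting on the sign of $\l-f(u)-f(v)$. Summing over all pairs $\{u,v\} \in \binom{V}{2}$ and rearranging yields
\[\sum_{\{u,v\}\in \binom{V}{2}}\max(\l-f(u)-f(v),0)=\l\mbinom{n}{2}-\Delta^+_f(\mathcal{K}).\]

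Finally, the hypothesis $\Delta_f(\mathcal{K})<0$ means $\Delta^-_f(\mathcal{K}) > \Delta^+_f(\mathcal{K})$, so $\l\binom{n}{2}-\Delta^-_f(\mathcal{K}) < \l\binom{n}{2}-\Delta^+_f(\mathcal{K})$, and combining with the displayed equation above gives the required strict inequality. There is no substantial obstacle here; the lemma is essentially a restatement of $\Delta_f(\mathcal{K}) < 0$ obtained by switching from $\min$ to $\max$ via the pairwise identity.
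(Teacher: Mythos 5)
Your proof is correct and follows essentially the same route as the paper's: get the equality by subtracting $\Delta^-_f(\mathcal{K})$ from the hypothesis, use $\Delta_f(\mathcal{K})<0$ to pass from $\Delta^-_f$ to $\Delta^+_f$, and then convert $\l\binom{n}{2}-\Delta^+_f(\mathcal{K})$ into the sum of $\max$ terms via the pairwise identity $\min(f(u)+f(v),\l)+\max(\l-f(u)-f(v),0)=\l$. The paper compresses the last step to a single sentence, but the underlying computation is identical.
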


\begin{proof}
The equality follows by subtracting $\Delta^-_f(\mathcal{K})$ from each side of $\sum_{v \in V}\sigma(M_v)=\l\binom{n}{2}$ and then applying the definition of $\Delta^-_f(\mathcal{K})$. Now, because $\Delta_f(\mathcal{K}) < 0$, $\Delta^+_f(\mathcal{K}) < \Delta^-_f(\mathcal{K})$ and hence $\l\binom{n}{2}-\Delta^-_f(\mathcal{K})<\l\binom{n}{2}-\Delta^+_f(\mathcal{K})$. Then the inequality follows using $\l\binom{n}{2} = \sum_{\{u,v\} \in \binom{V}{2}} \l$ and the definition of $\Delta^+_f(\mathcal{K})$.
\end{proof}

\section{Proof of Theorem~\ref{Theorem:NPComplete} when \texorpdfstring{$\lambda$}{lambda} is odd}\label{Section:LambdaOdd}

We begin with a result which guarantees the existence of a minimal restriction function with certain properties.

\begin{lemma}\label{Lemma:MinimalFProperties}
Let $n$ and $\lambda$ be integers such that $\lambda \geq 3$ is odd, let $\ell=\frac{\l-1}{2}$ and let $V$ be a set of $n$ vertices. Let $\mathcal{K}$ be the multigraph $\lambda K_V$ equipped with multisets $\{M_v:v\in V\}$ of integers from $\{1,\ldots,n-1\}$ such that $\sum_{v \in V}\sigma(M_v)=\l\binom{n}{2}$.
\begin{itemize}
    \item[\textup{(a)}]
If $|M_v| \leq \ell+2$ for each $v \in V$, then there is a minimal restriction function $f$ for $\mathcal{K}$ such that, for each $v \in V$,
\begin{itemize}
    \item[\textup{(i)}]
$f(v) \in \left\{
  \begin{array}{ll}
    \{0,\ell+1,\ell+2\} & \hbox{if $\sigma_{\ell}(M_v) \leq \ell(n-1)-|f^{-1}(\ell+2)|$;} \\
    \{\ell,\ell+1,\ell+2\} & \hbox{if $\sigma_{\ell}(M_v) > \ell(n-1)-|f^{-1}(\ell+2)|$;}
  \end{array}
\right.$
    \item[\textup{(ii)}]
$f(v) \neq \ell+1$ if $|M_v|=\ell+2$ and $\min(M_v)=\lfloor\frac{1}{2}(\sigma(M_v)-\sigma_{\ell}(M_v))\rfloor$.
\end{itemize}
\item[\textup{(b)}]
If there are positive integers $k \geq \ell+1$ and $m \leq n-1$ such that, for each $v \in V$, $|M_v| \leq k+2$ and $\sigma_{i}(M_v)=i m$ for $i \in \{0,\ldots,k\}$, then there is a minimal restriction function $f$ for $\mathcal{K}$ such that $f(v)=0$ or $f(v) \geq \ell+2$ for each $v  \in V$.
\end{itemize}
\end{lemma}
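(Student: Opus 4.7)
My plan is to prove both parts by choosing, among all minimal restriction functions $f$ for $\mathcal{K}$, one that is extremal with respect to a suitable lexicographic order on the profile $(|f^{-1}(i)|)_{i\geq 0}$, and then use Lemma~\ref{Lemma:RestrictionRestriction} to derive a contradiction whenever the stated conclusion fails at some vertex. The key observation is that, because $\lambda = 2\ell+1$ and $\mu_G(uv) = \lambda$ is constant, the increment in $\Delta^+_f$ when $f(v)$ is raised by one at a single vertex $v$ is exactly the number of neighbours $u$ with $f(u) + f(v) < \lambda$, so the quantities $k_j$ appearing in the proof of Lemma~\ref{Lemma:RestrictionRestriction} have a clean combinatorial meaning.

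For part (b), I would take a minimal $f$ lexicographically maximising the vector $(|f^{-1}(0)|, |f^{-1}(\ell+2)|, |f^{-1}(\ell+3)|, \ldots)$ and suppose for contradiction that some vertex $v$ has $f(v) = j \in \{1, \ldots, \ell+1\}$. Because the $k$ largest elements of $M_v$ are all equal to $m$ and $j \leq \ell+1 \leq k$, the $i$th and $(i+1)$st largest elements of $M_v$ coincide for every $i \in \{1, \ldots, \min(j, k-1)\}$; Lemma~\ref{Lemma:RestrictionRestriction}(a) then lets me iteratively lower $f(v)$ from $j$ down to $0$ while preserving minimality, strictly raising $|f^{-1}(0)|$ and contradicting the extremal choice. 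The only subtle case is $j = k = \ell+1$ with $|M_v| = k$, where Lemma~\ref{Lemma:RestrictionRestriction} is not directly applicable at $j = k$; here I would use the facts that $m \leq n-1$ and that the inequality $\sum_{v\in V}\sigma(M_v) = \lambda\binom{n}{2}$ forces the profile of $f$ to be tightly constrained to argue directly from the flow-network construction in Theorem~\ref{Theorem:RealTruthLambda} that swapping $f(v) = k$ for $f(v) = 0$ preserves minimality.

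For part (a), the argument is analogous but more delicate because the prescribed set in (i) itself depends on $|f^{-1}(\ell+2)|$. I would choose a minimal $f$ that first maximises $|f^{-1}(\ell+2)|$, then maximises $|f^{-1}(\{0,\ell+1,\ell+2\})|$, and finally applies a tiebreaker securing (ii). If some $v$ has $f(v) \in \{1, \ldots, \ell-1\}$, repeated application of Lemma~\ref{Lemma:RestrictionRestriction} must be able to push $f(v)$ down to $0$ or up through $\ell+1$; it can be stuck at the boundary value $\ell$ only if the increment in $\Delta^+_f$ produced by raising $f(v)$ from $\ell$ to $\ell+1$ fails to match $m_{\ell+1}$, and this is precisely the regime $\sigma_{\ell}(M_v) > \ell(n-1) - |f^{-1}(\ell+2)|$ — the point being that each vertex in $f^{-1}(\ell+2)$ is already saturated with respect to $v$ and so contributes nothing to the increment, while each other vertex contributes at most one. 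For (ii) I note that $\min(M_v) = \lfloor \tfrac{1}{2}(\sigma(M_v) - \sigma_{\ell}(M_v))\rfloor$ is equivalent to $m_{\ell+1} - m_{\ell+2} \in \{0,1\}$, which is exactly the regime in which Lemma~\ref{Lemma:RestrictionRestriction} applies at $j = \ell+1$ to shift $f(v)$ off the value $\ell+1$ onto $\ell$ or $\ell+2$.

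The main obstacle I anticipate is in part (a): translating the extremal choice of $f$ and the chain $m_{j+1} \leq k_{j+1} \leq k_j \leq m_j$ from the proof of Lemma~\ref{Lemma:RestrictionRestriction} into the precise threshold $\ell(n-1) - |f^{-1}(\ell+2)|$ requires careful counting of how many neighbours of $v$ are $\lambda$-saturated versus unsaturated under $f$. Once this counting is set up, the verification of both (i) and (ii) should reduce to routine applications of Lemma~\ref{Lemma:RestrictionRestriction}, with the various cases corresponding to whether the gap $m_j - m_{j+1}$ is $0$, $1$, or $\geq 2$.
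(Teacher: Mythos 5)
Your high-level framework --- pick a minimal restriction function extremal for a lexicographic profile and use Lemma~\ref{Lemma:RestrictionRestriction} to move $f(v)$ off ``bad'' values --- is exactly the paper's strategy, and your combinatorial reading of the increments $k_j$ (the number of pairs not yet $\lambda$-saturated) and of the condition in (a)(ii) as $m_{\ell+1}-m_{\ell+2}\in\{0,1\}$ are both correct. However, there are two genuine gaps.

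First, your choice of extremal $f$ for part (a) does not deliver (a)(ii). You propose maximising $|f^{-1}(\ell+2)|$ first and then $|f^{-1}(\{0,\ell+1,\ell+2\})|$. Suppose $f(v)=\ell+1$ with $m_{\ell+1}-m_{\ell+2}=1$. Lemma~\ref{Lemma:RestrictionRestriction}(b) only guarantees that \emph{one} of $f_\ell$, $f_{\ell+2}$ is minimal, and you cannot control which. If it is $f_\ell$, then $|f_\ell^{-1}(\ell+2)|=|f^{-1}(\ell+2)|$ but $|f_\ell^{-1}(\{0,\ell+1,\ell+2\})|=|f^{-1}(\{0,\ell+1,\ell+2\})|-1$, so $f_\ell$ is strictly worse than $f$ in your order and no contradiction results; a third-level tiebreaker cannot repair this, since $f_\ell$ already loses at level two. (In the case $m_{\ell+1}=m_{\ell+2}$ you could salvage the argument by noting that the proof of Lemma~\ref{Lemma:RestrictionRestriction}(a) actually shows $f_{j+1}$ is minimal as well as $f_{j-1}$, but that strengthening is not in the stated lemma and you do not invoke it.) The paper instead maximises $|f^{-1}(0)|$ and then \emph{minimises} $|f^{-1}(\ell+1)|$; under this order both $f_\ell$ and $f_{\ell+2}$ tie at level one and beat $f$ at level two, so (a)(ii) falls out immediately from Lemma~\ref{Lemma:RestrictionRestriction}(a) and (b) whichever option the lemma hands you. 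Note also that to rule out $f(v)\in\{1,\ldots,\ell-1\}$ in (a)(i) you cannot in general ``iteratively lower via Lemma~\ref{Lemma:RestrictionRestriction}(a)'', since nothing constrains $m_j-m_{j+1}$ for arbitrary $M_v$; the paper instead directly computes $\Delta_{f_i}(\mathcal{K})-\Delta_{f_0}(\mathcal{K})=i(n-1)-\sigma_i(M_w)\geq 0$ and uses maximality of $|f^{-1}(0)|$.

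Second, your treatment of the hard case of part (b) is both mischaracterised and underspecified. You say the only subtle case is $j=k=\ell+1$ with $|M_v|=k$; in fact the iterative descent via Lemma~\ref{Lemma:RestrictionRestriction}(a) breaks down whenever $f(v)=k$, regardless of whether $|M_v|$ equals $k$, because lowering from $k$ to $k-1$ requires $m_{k+1}=m_k$ and the hypothesis $\sigma_i(M_v)=im$ for $i\leq k$ says nothing about $m_{k+1}$. The sketch ``argue directly from the flow network that swapping $f(v)=k$ for $f(v)=0$ preserves minimality'' is not what is needed: the paper's actual argument first produces a vertex $w_1$ with $f(w_1)=0$ (using maximality of $|f^{-1}(0)|$), uses minimality at $w_1$ to conclude $(\ell+1)(n-m-1)\geq d$ where $d=\sum_{j\in\{1,2,3\}}\min(\ell+1,j)n_{\ell+j}$, and then shows that any $w_2$ with $f(w_2)=\ell+1$ would give $\Delta_{f_0}(\mathcal{K})=\Delta_f(\mathcal{K})-((\ell+1)(n-m-1)-d)-1<\Delta_f(\mathcal{K})$, a violation of minimality. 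The crucial $+1$ (the $\delta$ term, coming from $w_2$'s own contribution to $d$) is the whole point, and it is not visible in your sketch.
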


\begin{proof}
Of all the minimal restriction functions for $\mathcal{K}$, let $f$ be one such that $|f^{-1}(0)|$ is maximised and, subject to this, $|f^{-1}(\ell+1)|$ is minimised. For brevity, let $n_i=|f^{-1}(i)|$ for each nonnegative integer $i$.

We first prove (a). Suppose in accordance with (a) that $|M_v| \leq \ell+2$ for each $v \in V$. In view of our definition of $f$, (a)(ii) follows immediately from parts (a) and (b) of Lemma~\ref{Lemma:RestrictionRestriction}. Suppose there is a vertex $w \in V$ such that $f(w) \in \{0,\ldots,\ell\}$. For each $i \in \{0,\ldots,\ell\}$, let $f_i$ be the restriction function for $\mathcal{K}$ defined by $f_i(v)=f(v)$ for $v \in V \setminus \{w\}$ and $f_i(w)=i$.
Then $\Delta^-_{f_i}(\mathcal{K})=\Delta^-_{f_0}(\mathcal{K})+\sigma_i(M_w)$. Because $f(v) \leq |M_v| \leq \ell+2$ for each $v \in V$, $\Delta^+_{f_i}(\mathcal{K})=\Delta^+_{f_0}(\mathcal{K})+i(n-1)$ if $i \in \{0,\ldots,\ell-1\}$ and $\Delta^+_{f_\ell}(\mathcal{K})=\Delta^+_{f_0}(\mathcal{K})+\ell(n-1)-n_{\ell+2}$. Thus,
\[\Delta_{f_i}(\mathcal{K}) = \left\{
  \begin{array}{ll}
    \Delta_{f_0}(\mathcal{K}) + i(n-1) - \sigma_i(M_w) & \hbox{if $i \in \{0,\ldots,\ell-1\}$;} \\
    \Delta_{f_0}(\mathcal{K}) + \ell(n-1) - n_{\ell+2} - \sigma_\ell(M_w) & \hbox{if $i=\ell$.}
  \end{array}
\right.\]
So, for each $i \in \{1,\ldots,\ell-1\}$, $\Delta_{f_0}(\mathcal{K}) \leq \Delta_{f_i}(\mathcal{K})$ because $\sigma_{i}(M_w) \leq i(n-1)$. Furthermore, if $\sigma_{\ell}(M_w) > \ell(n-1)-n_{\ell+2}$, then $\Delta_{f_\ell}(\mathcal{K}) < \Delta_{f_0}(\mathcal{K})$, and if $\sigma_{\ell}(M_w) \leq \ell(n-1)-n_{\ell+2}$, then $\Delta_{f_0}(\mathcal{K}) \leq \Delta_{f_\ell}(\mathcal{K})$. In view of our definition of $f$, this establishes (a)(i).

We now prove (b). Suppose in accordance with (b) that there are positive integers $k \geq \ell+1$ and $m \leq n-1$ such that, for each $v \in V$, $|M_v| \leq k+2$ and $\sigma_{i}(M)=i m$ for $i \in \{0,\ldots,k\}$. Note that $f(v) \in \{0,k,k+1,k+2\}$ for each $v \in V$ by Lemma~\ref{Lemma:RestrictionRestriction}(a) in view of our definition of $f$. So if $k \geq \ell+2$, then the result follows immediately and it suffices to assume $k = \ell+1$ and show that $f(v) \neq \ell+1$ for each $v \in V$. Note that we have $f(w_1)=0$ for some $w_1 \in V$, for otherwise we would have $f(v) \geq \ell+1$ for each $v \in V$ and could conclude successively that $\Delta^+_f(\mathcal{K})=\l\binom{n}{2}$, that $\Delta_f(\mathcal{K})=0$, and the contradiction that $f$ is uniformly 0.

Let $w \in V$ be a vertex such that $f(w) \in \{0,\ell+1\}$. For each $i \in \{0,\ell+1\}$, let $f_i$ be the restriction function for $\mathcal{K}$ defined by $f_i(v)=f(v)$ for $v \in V \setminus \{w\}$ and $f_i(w)=i$. Then $\Delta^-_{f_{\ell+1}}(\mathcal{K}) = \Delta^-_{f_0}(\mathcal{K})+\sigma_{\ell+1}(M_w) = \Delta^-_{f_0}(\mathcal{K})+(\ell+1)m$ and, because $f(v) \leq |M_v| \leq \ell+3$ for each $v \in V$, $\Delta^+_{f_{\ell+1}}(\mathcal{K}) = \Delta^+_{f_0}(\mathcal{K})+(\ell+1)(n-1)-d+\delta$ where $d=\sum_{j\in \{1,2,3\}}\min(\ell+1,j)n_{\ell+j}$, $\delta=1$ if $f(w)=\ell+1$, and $\delta=0$ if $f(w)=0$ (note that $\sum_{j\in \{1,2,3\}}\min(\ell+1,j)|f_0^{-1}(\ell+j)|=d-\delta$). Thus,
\[\Delta_{f_{\ell+1}}(\mathcal{K}) = \Delta_{f_0}(\mathcal{K}) + (\ell+1)(n-m-1) - d + \delta.\]

Setting $w=w_1$, we see  that $(\ell+1) (n-m-1) \geq d$ because in this case we must have \mbox{$\Delta_{f_0}(\mathcal{K}) \leq \Delta_{f_{\ell+1}}(\mathcal{K})$} by our definition of $f$. Thus, if there were any vertex $w_2$ such that \mbox{$f(w_2)=\ell+1$}, we would obtain a contradiction to our definition of $f$ by setting $w=w_2$. So $f(v) \neq \ell+1$ for each $v \in V$.
\end{proof}

Our next result will allow us to accomplish the reduction of \textsc{3-partition} to \textsc{$(\lambda,\alpha)$-star decomp} in the case where $\lambda \geq 3$ is odd and $\alpha > \frac{\lambda}{\lambda+1}$.

\begin{samepage}
\begin{lemma}\label{Lemma:LambdaOddReduction}
Let $\lambda\geq 3$ be an odd integer, let $\{a_1,\ldots,a_{3q}\}$ be an instance of \textsc{3-partition}, let $\ell=\frac{\l-1}{2}$, and let $a=\frac{1}{q}(a_1+\cdots+a_{3q})$. Suppose that $n > 4(\ell+4)(a+1)q$ is an integer such that $n \equiv q+1 \mod{\l+1}$ and let $m=\frac{\lambda(n-1)+q}{\lambda+1}$ be an integer and $b= \ell(n-m-1)+\frac{q-1}{2}-(\ell+1)qa$. Let $B=\{b^{[q]}\}$ if $q$ is odd and $B=\{\lceil b \rceil^{[q/2]},\lfloor b \rfloor^{[q/2]}\}$ if $q$ is even, and let $M=\{m^{[(\ell+1)n-q]},(\ell+1)qa_1,\ldots,(\ell+1)qa_{3q}\} \uplus B$. There is a decomposition of $\lambda K_n$ into stars of sizes given by the elements of $M$ if and only if $\{a_1,\ldots,a_{3q}\}$ is a feasible instance of \textsc{3-partition}.
\end{lemma}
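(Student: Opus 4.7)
The argument proceeds in two directions.

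\emph{Sufficiency.} Given a valid 3-partition $\{C_1,\ldots,C_q\}$, I label $q$ of the $n$ vertices of $\lambda K_n$ as \emph{special} and prescribe
\[
M_{v_j}=\{m^{[\ell]}\}\uplus\{(\ell+1)qa_i:a_i\in C_j\}\uplus\{b_j\}
\]
at each special vertex $v_j$ (with the $\lceil b\rceil$'s and $\lfloor b\rfloor$'s distributed evenly among the $v_j$ when $q$ is even), and $\{m^{[\ell+1]}\}$ at each non-special vertex. Following the compression strategy of Section~\ref{Section:Strategy}, I merge the three big stars at each special vertex into a single virtual star of size $(\ell+1)qa$, obtaining $M_{v_j}^*=\{m^{[\ell]},b_j,(\ell+1)qa\}$. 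Since total and partial sums are weakly dominated, Lemma~\ref{Lemma:multistarDecomp} refines any star $\mathcal{K}^*$-decomposition into a star $\mathcal{K}$-decomposition. Since $|M_v^*|\leq \ell+2$ at every vertex, Lemma~\ref{Lemma:MinimalFProperties}(a) supplies a minimal restriction function $f$ of restricted form; a direct verification using the identity $(\ell+1)qa+b=\ell(n-m-1)+(q-1)/2$ together with $n\equiv q+1\pmod{\lambda+1}$ and $n>4(\ell+4)(a+1)q$ shows $\Delta_f(\mathcal{K}^*)\geq 0$, and Theorem~\ref{Theorem:RealTruthLambda} then produces the required decomposition.

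\emph{Necessity.} Suppose a decomposition is given, and for each $v$ let $M_v$ denote the multiset of sizes of stars centred at $v$. Let $V'$ be a set of $q$ vertices chosen to contain every centre of a $B$-star (padded arbitrarily if fewer than $q$ distinct centres exist), set $V''=V\setminus V'$, write $k'$ for the number of $m$-stars centred at $V'$, and let $j''$ and $A''$ denote the number and $a$-sum of big stars centred at $V''$. A direct rearrangement of Lemma~\ref{Lemma:partitionV}(a) yields
\[
(\ell q-k')(m-q)\;\leq\;q\bigl[j''-(\ell+1)A''\bigr].
\]
Since $a_i\geq 1$ gives $(\ell+1)A''\geq 2j''$, the right-hand side is at most $-q\ell j''\leq 0$, so $k'\geq \ell q$ and, in the equality case, $j''=A''=0$. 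A complementary use of Lemma~\ref{Lemma:partitionV}(a) (varying $V'$) together with the degree cap $\sigma(M_v)\leq \lambda(n-1)$ rules out $k'>\ell q$, forcing equality and placing all $3q$ big stars and $q$ $B$-stars at $V'$. Lemma~\ref{Lemma:partitionV}(b) then gives $\sigma(M_u)\leq \ell(n-2)+(\ell+1)q-1$ for each $u\in V'$. Combining this bound with $\sigma(M_u)\geq k_u m$ and the sum $\sum_u k_u=\ell q$ forces $k_u=\ell$ uniformly. Subtracting the $\ell m$ contribution leaves a residual budget of order $(q-1)\lambda/2$ per vertex for $\beta_u+(\ell+1)q A_u^-$ (with $\beta_u$ the $B$-star sum and $A_u^-$ the big-star $a$-sum at $u$); this rules out two $B$-stars at any single $u$ (since $2\lfloor b\rfloor$ already exceeds the budget for large $n$), so each $u\in V'$ carries exactly one $B$-star. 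The residual slack is then too small to accommodate $A_u^-\neq a$: under $a_i>a/4$ four or more big stars at $u$ would give $A_u^->a+1$, and under $a_i<a/2$ two or fewer would give $A_u^-<a$, while the slack is strictly less than $1$. Hence each $u\in V'$ hosts exactly three big stars with $a_i$-values summing to $a$, and these $q$ triples are the required 3-partition.

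\emph{Main obstacle.} The necessity direction is the delicate part. Extracting the displayed inequality from Lemma~\ref{Lemma:partitionV}(a) is routine, but pinning down $k'=\ell q$ exactly, exactly one $B$-star per $u\in V'$, and the three $a_i$ summing to $a$ at each $u$ requires careful bookkeeping with Lemma~\ref{Lemma:partitionV}(b), the gap between $a/4$ and $a/2$, and the scale hypothesis $n>4(\ell+4)(a+1)q$. That hypothesis is calibrated precisely to guarantee that $m-q$ and $b$ dominate the additive slacks appearing above, leaving the configuration coming from a 3-partition as the unique solution.
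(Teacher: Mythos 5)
Your sufficiency argument is essentially sound but uses a weaker compression than the paper. You keep the $B$-star and the merged big star separate, obtaining $M^*_{v_j}=\{m^{[\ell]},b_j,(\ell+1)qa\}$ of size $\ell+2$, whereas the paper merges them into a single star $b_j+(\ell+1)qa$, giving $M_{v_j}=\{m^{[\ell]},b_j+(\ell+1)qa\}$ of size $\ell+1$. The paper's tighter compression means $|M_v|=\ell+1$ uniformly, so Lemma~\ref{Lemma:MinimalFProperties}(a)(i) forces $f(v)\in\{0,\ell+1\}$ and the computation of $\Delta_f$ reduces to a single quadratic in $n_{\ell+1}$. With your compression, special vertices have $|M^*_v|=\ell+2$, so $f(v)$ may take values $0$, $\ell+1$, or $\ell+2$, and you would have to track the dependence on $n_{\ell+2}$ as well. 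You assert ``a direct verification shows $\Delta_f(\mathcal{K}^*)\geq 0$'' without carrying it out; the claim is plausible but the extra case is nontrivial and you should either verify it or adopt the paper's tighter compression.

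The necessity direction has a genuine gap. You choose $V'$ to contain all $B$-star centres and write $k'$ for the number of $m$-stars centred at $V'$. Your rearrangement of Lemma~\ref{Lemma:partitionV}(a) correctly gives $k'\geq\ell q$, with $j''=0$ in the equality case. But you then assert that ``a complementary use of Lemma~\ref{Lemma:partitionV}(a) (varying $V'$) together with the degree cap $\sigma(M_v)\leq\lambda(n-1)$ rules out $k'>\ell q$.'' Neither of these tools does the job with your choice of $V'$: the degree cap at a $B$-star centre $u$ only forces $k_u\leq\lambda$ (since $\lfloor b\rfloor + \lambda m>\lambda(n-1)$ but $\lfloor b\rfloor + (\lambda-1)m$ need not), so it caps $k'$ only at roughly $q\lambda=q(2\ell+1)>\ell q$; and applying Lemma~\ref{Lemma:partitionV}(a) with the roles of $V'$ and $V''$ reversed gives $\sum_{x\in M'}(x-(n-q))\leq\lambda\binom{q}{2}$, which is vacuous because $m<n-q$ for $n$ large, so every summand is negative. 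The paper resolves this by picking $V'$ the other way round: by pigeonhole on the $(\ell+1)n-q$ copies of $m$, it first chooses $V'$ with $\nu_m(M')\leq\ell q$, so that $\{m^{[r-\ell q]}\}\subseteq M''$; combined with the fact that every element of $M$ exceeds $q$, the inequality from Lemma~\ref{Lemma:partitionV}(a) is then forced to be \emph{tight} and $M''=\{m^{[r-\ell q]}\}$. Only after that is it deduced that all $B$-stars and big stars are at $V'$. Your order of quantification (fix $V'$ by the $B$-stars first, then bound $k'$) cannot be closed by the ingredients you cite; you should instead choose $V'$ by the pigeonhole on $m$-stars, as in the paper, and deduce the location of the $B$-stars as a consequence. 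Once that is repaired, your downstream accounting (uniformly $k_u=\ell$, one $B$-star per $u$, per-vertex slack $<1$ in $A_u^-$) is correct, though the paper gets the last step more directly from the divisibility $\sigma(A_v)\equiv 0\pmod{(\ell+1)q}$ rather than from the $a/4<a_i<a/2$ gap.
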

\end{samepage}

\begin{proof}
Broadly, our proof strategy is to show that a decomposition of $\lambda K_n$ into stars of sizes given by $M$ exists if and only if $n-q$ vertices each have stars of sizes $\{m^{[\ell+1]}\}$ centred at them and the remaining $q$ vertices each have stars of sizes $\{m^{[\ell]},b_v\} \uplus A_v$ centred at them, where $b_v \in \{\lceil b \rceil,\lfloor b \rfloor\}$ and $A_v$ is a subset of $\{(\ell+1)qa_1,\ldots,(\ell+1)qa_{3q}\}$ with $\sigma(A_v)=(\ell+1)qa$. The values of $m$ and $b$ and the multiset $M$ have been carefully chosen to ensure that this is the case. It is then not too hard to show that $\{(\ell+1)qa_1,\ldots,(\ell+1)qa_{3q}\}$ can be partitioned into $q$ such sets $A_v$ if and only if $\{a_1,\ldots,a_{3q}\}$ is a feasible instance of \textsc{3-partition}.

Observe that $b$ is an integer if $q$ is odd and $b$ is an odd multiple of $\frac{1}{2}$ if $q$ is even. Also,
\[\sigma(M)=m((\ell+1)n-q)+(\ell+1)q^2a+bq=\lambda\mbinom{n}{2},\]
where the second equality is obtained by applying the definitions of $b$ and $m$ and using $\lambda=2\ell+1$. It will be useful to note the following facts.
\begin{align}
m &> \lceil b \rceil + (\ell+1)qa + (\ell+\tfrac{1}{2})(q-1) \label{Equation:OddLambdaFact1}\\
2\lfloor b \rfloor &> b+(\ell+1)qa + (\ell+\tfrac{1}{2})(q-1) \label{Equation:OddLambdaFact2}
\end{align}
Using $\lceil b \rceil \leq b+\frac{1}{2}$ and the definition of $b$, we see that to establish \eqref{Equation:OddLambdaFact1} it suffices to show that $(\ell+1)m>\ell(n-1)+(\ell+1)(q-1)+\frac{1}{2}$. This does indeed hold because $(\ell+1)m=(\ell+\frac{1}{2})(n-1)+\frac{q}{2}$ using the definition of $m$ and $\l=2\ell+1$, and because our hypothesis on $n$ implies that $\frac{1}{2}(n-1)>(\ell+1)(q-1)+\frac{1}{2}$. Using $\lfloor b \rfloor \geq b-\frac{1}{2}$ and the definition of $b$, we see that to establish \eqref{Equation:OddLambdaFact2} it suffices to show that $\ell(n-m-1)>2(\ell+1)qa+\ell(q-1)+1$. This does indeed hold because $\ell(n-m-1)=\frac{\ell(n-q-1)}{2\ell+2}$ using the definition of $m$ and $\l=2\ell+1$, and because our hypothesis on $n$ implies that $\ell(n-q-1)>4(\ell+1)^2q(a+1)$. Let $V$ be a set of $n$ vertices.

\textbf{`If' direction.} Suppose that $\{a_1,\ldots,a_{3q}\}$ is a feasible instance of \textsc{3-partition}. Then clearly there is a partition $\{A_1,\ldots,A_q\}$ of $\{(\ell+1)qa_1,\ldots,(\ell+1)qa_{3q}\}$ such that $\sigma(A_i)=(\ell+1)qa$ for each $i \in \{1,\ldots,q\}$. Let $V'$ be a set of $q$ vertices in $V$, let $V''=V \setminus V'$, and let $V^*$ be a subset of $V'$ such that $|V^*|=0$ if $q$ is odd and $|V^*|=\frac{q}{2}$ if $q$ is even. By Lemma~\ref{Lemma:bloodyObvious} it suffices to show that there is a star $\mathcal{K}$-decomposition where $\mathcal{K}$ is the multigraph $\l K_V$ equipped with multisets $M_v=\{m^{[\ell+1]}\}$ for each $v \in V''$, $M_v=\{m^{[\ell]},\lfloor b \rfloor +(\ell+1)qa\}$ for each $v \in V' \setminus V^*$ and $M_v=\{m^{[\ell]},\lceil b \rceil +(\ell+1)qa\}$ for each $v \in V^*$ (recall that $\lceil b \rceil +(\ell+1)qa < m$). Let $f$ be a minimal restriction function for $\mathcal{K}$ given by Lemma~\ref{Lemma:MinimalFProperties}(a). By Theorem~\ref{Theorem:RealTruthLambda} it suffices to show that $\Delta_f(\mathcal{K}) \geq 0$.

Since $|M_v| = \ell+1$ and $\sigma_{\ell}(M_v)=\ell m$ for each $v \in V$, it follows from Lemma~\ref{Lemma:MinimalFProperties}(a)(i) that $f(v) \in \{0,\ell+1\}$ for each $v \in V$. Let $V_i=f^{-1}(i)$ and $n_i=|V_i|$ for $i \in \{0,\ell+1\}$. We may assume $n_{\ell+1} < n$, for otherwise $\Delta^+_f(\mathcal{K})=\l\binom{n}{2}$ and hence $\Delta_f(\mathcal{K}) \geq 0$. We claim that
\begin{align*}
\Delta_f(\mathcal{K})&=\lambda\mbinom{n_{\ell+1}}{2}+(\ell+1)n_0n_{\ell+1}-\sum_{v \in V_{\ell+1}}\sigma(M_v) \\
&= \tfrac{1}{2}n_{\ell+1}(2\ell(n-1) + 2n - n_{\ell+1} -1)-\sum_{v \in V_{\ell+1}}\sigma(M_v)\\
&\geq \left\{
  \begin{array}{ll}
    \frac{1}{2}n_{\ell+1}(n-n_{\ell+1}-q) & \hbox{if $n_{\ell+1} \leq n-q$;} \\
    \frac{1}{2}(n-n_{\ell+1}-1)(n_{\ell+1}-(n-q)) & \hbox{if $n-q < n_{\ell+1} \leq n-1$.}
  \end{array}
\right.
\end{align*}
If this claim holds then $\Delta_f(\mathcal{K}) \geq 0$ as required, so it suffices to prove the claim. The second equality of the claim can be obtained by substituting $n_0=n-n_{\ell+1}$ and $\l=2\ell+1$. To see that the inequality holds note that $|V'|=q$, $\sigma(M_v) \leq (\ell+1)m$ for each $v \in V$, and $\sigma(M_v) \leq \ell m + b + (\ell+1)qa + \frac{1}{2}=\ell(n-1)+\frac{q}{2}$ for each $v \in V'$, and hence that
\[\sum_{v \in V_{\ell+1}}\sigma(M_v) \leq
\left\{
  \begin{array}{ll}
    n_{\ell+1}(\ell+1)m & \hbox{if $n_{\ell+1} \leq n-q$;} \\
    (n-q)(\ell+1)m+(n_{\ell+1}-(n-q))(\ell(n-1)+\frac{q}{2}) & \hbox{if $n-q < n_{\ell+1} \leq n-1$.}
  \end{array}\right.\]
Using this fact, the inequality in the claim can be established by routine calculation after applying the definition of $m$ and using $\l=2\ell+1$.

\textbf{`Only if' direction.} We do not retain any of the notation defined in the proof of the `if' direction. Suppose there is a star $\mathcal{K}$-decomposition, where $\mathcal{K}$ is $\lambda K_V$ equipped with some multisets $\{M_v:v \in V\}$ such that $\biguplus_{v \in V}M_v=M$.

Let $r=(\ell+1)n-q$. Let $\{V',V''\}$ be a partition of $V$ such that $|V'|=q$ and $\nu_m(M') \leq \ell q$ where $M'=\biguplus_{v \in V'} M_v$. Such a partition exists by pigeonhole arguments because $\nu_m(M)=r=(\ell+1)n-q$. Let $M''=\biguplus_{v \in V''} M_v$.

We will show that $M''= \{m^{[r-\ell q]}\}$. Note that, by the definitions of $M$ and $\{V',V''\}$, we have $\{m^{[r-\ell q]}\} \subseteq M''$. By Lemma~\ref{Lemma:partitionV}(a),
\begin{equation}\label{Equation:OddLambdaPartition1}
\sum_{x \in M''}(x-q) \leq \lambda\mbinom{n-q}{2}= (\ell + 1)(n - q)(m - q) = (r-\ell q)(m-q),
\end{equation}
where the first equality follows from our definition of $m$ and the second from our definition of $r$. So, because $\{m^{[r-\ell q]}\} \subseteq M''$ and each element of $M$ is greater than $q$, it must be that $M''=\{m^{[r-\ell q]}\}$ and the inequality in \eqref{Equation:OddLambdaPartition1} can be replaced with an equality.

Thus we can apply Lemma~\ref{Lemma:partitionV}(b) to obtain, for each $u \in V'$,
\begin{equation}\label{Equation:LambdaOddReductionOneSmall1}
\sigma(M_u) \leq \lambda(n-1)-|M''| = \ell(n + q -2)+q-1=\ell m + b + (\ell+1) qa  + (\ell+\tfrac{1}{2})(q-1),
\end{equation}
where the first equality follows using $|M''|=r-\ell q$ and the definition of $r$ and the second equality follows using the definitions of $b$ and $m$. Because $M''=\{m^{[r-\ell q]}\}$, we have $M'=\{m^{[\ell q]},(\ell+1)qa_1,\ldots,(\ell+1)qa_{3q}\} \uplus B$. Thus, using \eqref{Equation:LambdaOddReductionOneSmall1}, \eqref{Equation:OddLambdaFact1} and \eqref{Equation:OddLambdaFact2}, we can conclude successively that $\nu_m(M_v)=\ell$ and $\nu_{\{\lfloor b \rfloor,\lceil b \rceil\}}(M_v)=1$ for each $v \in V'$. It follows that $M_v=\{m^{[\ell]},b_v\} \uplus A_v$ for each $v \in V'$ where $\{b_v:v\in V\}=B$ and $\{A_v:v \in V'\}$ is a partition of $\{(\ell+1)qa_1,\ldots,(\ell+1)qa_{3q}\}$ into $q$ classes. So, because $\sigma(A_v) \equiv 0 \mod{(\ell+1)q}$ for each $v \in V'$ and  \mbox{$(\ell+1)q > (\ell+\tfrac{1}{2})(q-1) + \tfrac{1}{2}$}, it follows from \eqref{Equation:LambdaOddReductionOneSmall1} that $\sigma(A_v) = (\ell+1)qa$ for each $v \in V'$. The existence of $\{A_v:v \in V'\}$ implies there is a partition of $\{a_1,\ldots,a_{3q}\}$ into $q$ classes such that the elements of each class sum to $a$.
\end{proof}

In this $\lambda$ odd case we will use a greedy method to assign the specified star sizes to the vertices of the multigraph when proving that every instance of \textsc{$(\lambda,\alpha)$-star decomp} is feasible for $\alpha \leq \frac{\l}{\l+1}$. We now detail this method, and prove some basic properties of the assignment it produces. Let $m_1,\ldots,m_t$ be positive integers and $V$ be a set of $n$ vertices. A \emph{greedy assignment of $m_1,\ldots,m_t$ to multisets $\{M_v:v\in V\}$} is one produced according to the following iterative procedure. At each stage, take a largest unassigned element of $\{m_1,\ldots,m_t\}$ and assign it to a multiset $M_u$ such that the sum of the elements already assigned to $M_u$ is at most the sum of the elements already assigned to $M_v$ for each $v \in V$. Continue until all elements of $\{m_1,\ldots,m_t\}$ are assigned.

\begin{lemma}\label{Lemma:GreedyAllocationProperties}
Let $\l$ be a fixed odd integer. Let $n$, $m$ and $m_1,\ldots,m_t$ be positive integers such that $n > m \geq m_1 \geq \cdots \geq m_t$, $m_{t-1}+m_t > m > \frac{n}{2}$. Let $V$ be an index set of cardinality $n$. A greedy assignment $\{M_v:v \in V\}$ of $m_1,\ldots,m_t$ can be produced in polynomial time in $n$ and for any such assignment the following hold.
\begin{itemize}
    \item[\textup{(G1)}]
For each $v \in V$, $|M_v| \in \{\lfloor\frac{t}{n}\rfloor,\lceil\frac{t}{n}\rceil\}$.
    \item[\textup{(G2)}]
For any $u,v \in V$, $\sigma(M_v) \leq \sigma(M_u)+\min(M_v)$.
    \item[\textup{(G3)}]
For any $u,v \in V$ such that $|M_v| \geq 1$, $\sigma(M_u) \geq \frac{|M_v|-1}{|M_v|}\sigma(M_v)$.
    \item[\textup{(G4)}]
For any $u,v \in V$ such that $|M_v|=|M_u|$, $\sigma(M_v) \leq \sigma(M_u)+\max(M_v)-\min(M_u)$.
    \item[\textup{(G5)}]
For any $u,v \in V$ such that $|M_v|=|M_u|+1$, $\sigma(M_v) > \sigma(M_u)$.
\end{itemize}
\end{lemma}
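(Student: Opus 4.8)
The plan is to verify (G1)--(G5) one at a time, using the structure forced by processing the elements in non-increasing order. That a greedy assignment exists and can be built in polynomial time in $n$ is routine: the procedure runs for $t$ iterations, and each iteration just locates a multiset of currently minimum sum (by a linear scan over the $n$ multisets, say), so the whole thing is polynomial in $n$ and $t$, hence polynomial in $n$ in the settings where we use it (there $t\le m_1+\cdots+m_t$ is polynomial in $n$). Two elementary observations underlie everything else. First, since the elements are assigned in non-increasing order, within each $M_v$ the $i$th element assigned is exactly the $i$th largest element of $M_v$; in particular $\max(M_v)$ is the first element placed in $M_v$, $\min(M_v)$ the last, and $m_t$ is the last element placed overall. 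Second, for any $j\le t$ the process truncated after its first $j$ steps is itself a greedy assignment of $m_1,\ldots,m_j$, and these satisfy the hypotheses of the lemma (indeed $m_{j-1}+m_j\ge m_{t-1}+m_t>m$ as the $m_i$ are non-increasing); so any of (G1)--(G5), once proved, applies to every intermediate state. Note also that $m_{t-1}+m_t>m$ says exactly that $m_i+m_j>m$ for all $i\neq j$, and that it gives $m-m_{t-1}<m_t$.

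Properties (G2) and (G3) are immediate and do not use the size hypotheses. For (G2), assume $M_v\neq\emptyset$ (else the claim is trivial) and let $x$ be the last element placed in $M_v$, so $x=\min(M_v)$; then $\sigma(M_v)$ equals the sum of $M_v$ immediately before $x$ was placed, plus $x$, and at that moment $M_v$ had the least sum, so $\sigma(M_v)\le\sigma(M_u)+\min(M_v)$ for every $u$. For (G3), combine this with $\min(M_v)\le\sigma(M_v)/|M_v|$ to get $\sigma(M_u)\ge\sigma(M_v)-\min(M_v)\ge\tfrac{|M_v|-1}{|M_v|}\sigma(M_v)$.

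I would prove (G1), (G4) and (G5) together by strong induction on $t$; the case $t\le n$ is trivial since then each $|M_v|\le 1$. For (G4), write $M_v=\{v_1\ge\cdots\ge v_c\}$ and $M_u=\{u_1\ge\cdots\ge u_c\}$; it suffices to show $v_{i+1}\le u_i$ for $1\le i\le c-1$, for then summing yields $\sigma(M_v)-\max(M_v)\le\sigma(M_u)-\min(M_u)$, which is (G4). If instead $v_{i+1}>u_i$, then by the non-increasing order the $(i{+}1)$st element of $M_v$ was placed strictly before the $i$th element of $M_u$, so just before that $i$th element of $M_u$ is assigned, $M_v$ already holds at least $i+1$ elements while $M_u$ holds $i-1$; this contradicts (G1) applied to that (strictly shorter) intermediate state. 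For (G5), suppose $|M_v|=|M_u|+1$; we may take $M_u\neq\emptyset$. Let $M_z$ be the multiset that received $m_t$. If $z\notin\{u,v\}$, deleting $m_t$ gives a greedy assignment of $m_1,\ldots,m_{t-1}$ in which $M_v,M_u$ are unchanged with sizes still differing by $1$, so $\sigma(M_v)>\sigma(M_u)$ by the inductive (G5). If $z=u$, then in that shorter assignment $|M_v|-|M_u|=2$, contradicting the inductive (G1). If $z=v$, then in the shorter assignment $M_v\setminus\{m_t\}$ and $M_u$ have equal size, $M_v\setminus\{m_t\}$ has minimum sum among all multisets (that is why $m_t$ went there), and its least element is at least $m_{t-1}$ (it lies among $m_1,\ldots,m_{t-1}$); the inductive (G4) then gives $\sigma(M_u)\le\sigma(M_v)-m_t+\max(M_u)-\min(M_v\setminus\{m_t\})\le\sigma(M_v)-m_t+m-m_{t-1}<\sigma(M_v)$.

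Finally I would deduce (G1) for $t$ elements (equivalently, that the sizes $|M_v|$ differ pairwise by at most $1$, since they sum to $t$). By (G2), no multiset is empty while another has at least $2$ elements (if $M_u=\emptyset$ and $|M_v|\ge 2$ then $\sigma(M_v)\le\min(M_v)<\sigma(M_v)$), so if some $M_v$ is empty all sizes lie in $\{0,1\}$ and we are done. Otherwise let $M_z$ receive $m_t$; deleting $m_t$ gives a greedy assignment of $m_1,\ldots,m_{t-1}$ whose sizes all lie in $\{c_0,c_0+1\}$ for some $c_0$, by the inductive (G1). Restoring $m_t$ to $M_z$ can only break the bound if, in that shorter assignment, $|M_z|=c_0+1$ while some $M_{w'}$ has size $c_0$; but then $|M_z|=|M_{w'}|+1$, so $\sigma(M_z)>\sigma(M_{w'})$ by the inductive (G5), contradicting the fact that $M_z$ had minimum sum when $m_t$ was placed. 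Hence the sizes stay within $1$ of each other. The main obstacle here is exactly (G1): it cannot be proved by a one-step induction on the process, because a multiset with a few large elements and one with many small elements can have their sums in either order, so ``minimum sum'' does not translate to ``minimum size''. The way around this is to run (G1), (G4) and (G5) as a single induction on $t$, letting (G5) — which effectively compares the final state with the state one step earlier — carry the inductive step of (G1), while (G4), which needs only (G1) at strictly earlier times, supplies the estimate $m-m_{t-1}<m_t$ that makes (G5) go through.
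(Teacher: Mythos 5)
Your proof is correct and follows essentially the same route as the paper's: induction on $t$, with (G2)--(G3) proved directly, (G4) from the interleaving claim that the $(i+1)$st element placed in one multiset is at most the $i$th placed in another (justified via (G1) for prefixes), and (G1) obtained from the inductive (G5) applied to the state just before the last element is placed. The only minor difference is (G5) itself, which the paper derives directly from the same interleaving bound ($\sigma(M_u)-\max(M_u)\le\sigma_{|M_v|-2}(M_v)$, then $m_{t-1}+m_t>m$ and $m_1\le m$), whereas you get it by a case analysis on which multiset received the final element; both rest on the same facts and your version is equally valid.
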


\begin{proof}
Producing a greedy assignment clearly takes only polynomial time in $n$. To show that (G1)--(G5) hold, we proceed by induction on $t$. The result is obvious for $t=1$, so suppose it is true for $t \in\{1,\ldots,t'\}$ for some positive integer $t'$. We must show it also holds when integers $m_1 \geq \ldots \geq m_{t'+1}$ are assigned. Let $\{M'_v:v \in V\}$ be the multisets resulting from assigning $m_1,\ldots,m_{t'}$ and $\{M_v:v \in V\}$ be the multisets resulting from assigning $m_1,\ldots,m_{t'+1}$. We now establish (G1), (G2), (G3), (G4) and (G5) hold for $\{M_v:v \in V\}$.
\begin{itemize}
    \item[\textup{(G1)}]
Because (G1) holds for $t=t'$, we have $|M'_v| \in \{\lfloor\frac{t'}{n}\rfloor,\lceil\frac{t'}{n}\rceil\}$ for each $v \in V$. Let $M_w$ be the multiset to which the $(t'+1)$st integer is assigned. Because (G5) holds for $t=t'$, we have $\sigma(M'_v) > \sigma(M'_u)$ for any $u,v \in V$ such that $|M'_v|=|M'_u|+1$. So, because $\sigma(M'_w) \leq \sigma(M'_v)$ for each $v \in V$, we have $|M'_w|=\lfloor\frac{t'}{n}\rfloor$ and it follows that $|M_v| \in \{\lfloor\frac{t'+1}{n}\rfloor,\lceil\frac{t'+1}{n}\rceil\}$ for each $v \in V$.
    \item[\textup{(G2)}]
Suppose for a contradiction that $\sigma(M_v)-\min(M_v) > \sigma(M_u)$. Then, when the last integer was assigned to $M_v$, the sum of the integers already assigned to $M_v$ was greater than the sum of the integers already assigned to $M_u$ contradicting our greedy assignment method.
    \item[\textup{(G3)}]
We have $\sigma(M_v) \leq \sigma(M_u)+\min(M_v)$ by (G2). Thus, because $\min(M_v) \leq \frac{1}{|M_v|}\sigma(M_v)$, the result follows.
    \item[\textup{(G4)}]
Because (G1) holds for each $t \in \{1,\ldots,t'+1\}$ we have that, for each $i \in \{1,\ldots,|M_v|-1\}$, the $(i+1)$st integer assigned to $M_v$ is less than or equal to the $i$th integer assigned to $M_u$ (because it was assigned later). Thus $\sigma(M_v)-\max(M_v) \leq \sigma(M_u)-\min(M_u)$.
    \item[\textup{(G5)}]
Similarly, for each $i \in \{1,\ldots,|M_u|-1\}$, the $(i+1)$st integer assigned to $M_u$ is less than or equal to the $i$th integer assigned to $M_v$. Thus $\sigma(M_u)-\max(M_u) \leq \sigma_{|M_v|-2}(M_v)$. We have $\sigma(M_v) > \sigma_{|M_v|-2}(M_v)+m$ because $m_{t'}+m_{t'+1}>m$, and we have $\max(M_u) \leq m$ because $m_1 \leq m$. Thus $\sigma(M_u) < \sigma(M_v)$.\qedhere
\end{itemize}
\end{proof}

Our last lemma for this section shows that, when $\alpha \leq \frac{\lambda}{\lambda+1}$, every instance of \textsc{$(\lambda,\alpha)$-star decomp} is feasible. We show this by first greedily assigning (as in Lemma~\ref{Lemma:GreedyAllocationProperties}) the specified star sizes to the vertices of $\l K_n$. We then ``compress'' the resulting list at each vertex so as to reduce the number of possible restriction functions we need to consider. Finally we use Theorem~\ref{Theorem:RealTruthLambda} and Lemma~\ref{Lemma:MinimalFProperties} to establish the existence of the decomposition into the compressed sizes, and hence also of the desired decomposition. To achieve this we are forced to consider a number of cases.

\begin{lemma}\label{Lemma:LambdaOddExistence}
Let $\lambda \geq 3$ be an odd integer. For any positive integer $n$, if $M$ is a multiset of positive integers such that $\sigma(M)=\l\binom{n}{2}$ and \mbox{$\max(M) \leq  \frac{\lambda(n-1)+1}{\lambda+1}$}, then a decomposition of $\lambda K_n$ into stars of sizes given by the elements of $M$ exists and can be found in polynomial time in $n$.
\end{lemma}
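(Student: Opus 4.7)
The plan is to implement the strategy from Section~\ref{Section:Strategy}. Write $\ell=\frac{\lambda-1}{2}$ and let $m=\lfloor\frac{\lambda(n-1)+1}{\lambda+1}\rfloor$, so that $\max(M)\le m$. First, by applying Lemma~\ref{Lemma:bloodyObvious} in reverse---repeatedly merging the two smallest elements of $M$ whenever their sum is at most $m$---I may assume $m_{t-1}+m_t>m$, which is the final hypothesis required by Lemma~\ref{Lemma:GreedyAllocationProperties}. I then greedily assign the resulting elements over an $n$-vertex set $V$ to obtain multisets $\{M_v:v\in V\}$ satisfying properties (G1)--(G5).

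The next step is compression. For each $v$, I would replace $M_v$ by a multiset $M^*_v$ with $|M^*_v|\le\ell+2$, $\sigma(M^*_v)=\sigma(M_v)$, and $\sigma_i(M^*_v)\ge\sigma_i(M_v)$ for every $i$: roughly, take the first $\ell$ entries of $M^*_v$ to be $m$ and distribute the remaining weight $\sigma(M_v)-\ell m$ into at most two further elements, each of size at most $m$. Verifying that $\sigma(M_v)\le(\ell+2)m$ (so that such a compression fits in $\ell+2$ slots) uses (G2) together with $\sigma(M_v)\le\lfloor\lambda(n-1)/2\rfloor+\min(M_v)\le\lfloor\lambda(n-1)/2\rfloor+m$, and may require a short side calculation on borderline values of $(\lambda+1)m$ vs $\lambda(n-1)$. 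After compression, $\sigma_\ell(M^*_v)=\ell m$ whenever $|M^*_v|\ge\ell+1$. Let $\mathcal{K}^*$ be $\lambda K_V$ equipped with $\{M^*_v\}$. By Lemma~\ref{Lemma:multistarDecomp} combined with the compression inequalities $\sigma_i(M^*_v)\ge\sigma_i(M_v)$, any star $\mathcal{K}^*$-decomposition refines vertex-by-vertex into a decomposition of $\lambda K_n$ into stars of sizes given by $M$, so it suffices to produce a star $\mathcal{K}^*$-decomposition.

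By Theorem~\ref{Theorem:RealTruthLambda}, this amounts to showing $\Delta(\mathcal{K}^*)\ge 0$. I would pick a minimal restriction function $f$ from Lemma~\ref{Lemma:MinimalFProperties}(a), so $f(v)\in\{0,\ell,\ell+1,\ell+2\}$ with $f(v)=\ell$ permitted only when $\sigma_\ell(M^*_v)>\ell(n-1)-|f^{-1}(\ell+2)|$, and with clause (a)(ii) restricting $f(v)=\ell+1$ in certain cases. Writing $n_i=|f^{-1}(i)|$ and using the key identity $\sigma_\ell(M^*_v)=\ell m$, the activation condition for $f(v)=\ell$ becomes $n_{\ell+2}>\ell(n-m-1)$. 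A direct expansion of $\Delta_f(\mathcal{K}^*)=\Delta^+_f(\mathcal{K}^*)-\Delta^-_f(\mathcal{K}^*)$ then isolates a ``remainder'' contribution expressible in terms of $\sigma(M^*_v)-\ell m$ at vertices with $f(v)\ge\ell+1$, which can be bounded via (G4) and (G5) together with $\sum_v\sigma(M^*_v)=\lambda\binom{n}{2}$. Splitting cases on whether $n_\ell$, $n_{\ell+1}$, and $n_{\ell+2}$ vanish should yield $\Delta_f(\mathcal{K}^*)\ge 0$ in every case.

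I expect the main obstacle to be this final case analysis. The bound $\alpha'=\lambda/(\lambda+1)$ is sharp---Lemma~\ref{Lemma:LambdaOddReduction} furnishes infeasible instances just above it---so the slack in each inequality is tight, and the remainders supplied by (G4) and (G5) must precisely absorb the deficit created by vertices with $f(v)=\ell$. The polynomial-time claim follows because each ingredient (merging, greedy assignment, compression, solving the flow problem underlying Theorem~\ref{Theorem:RealTruthLambda}, and refining via Lemma~\ref{Lemma:multistarDecomp}) runs in polynomial time in $n$.
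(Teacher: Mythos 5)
Your outline tracks the paper's overall strategy — merge via Lemma~\ref{Lemma:bloodyObvious}, greedily assign via Lemma~\ref{Lemma:GreedyAllocationProperties}, compress, then combine Theorem~\ref{Theorem:RealTruthLambda} with Lemma~\ref{Lemma:MinimalFProperties} and Lemma~\ref{Lemma:dualThing}. But there is a genuine gap at the compression step. You assert that $\sigma(M_v)\leq(\ell+2)m$ holds for every $v$, justifying compression into at most $\ell+2$ slots, and you propose to close the ``borderline'' by comparing $(\lambda+1)m$ with $\lambda(n-1)$. That comparison does not go your way: since $m=\lfloor\frac{\lambda(n-1)+1}{\lambda+1}\rfloor$, one generally has $(\lambda+1)m\leq\lambda(n-1)$ (equality only when $\lambda+1$ divides $\lambda(n-1)+1$), so the bound $\lfloor\lambda(n-1)/2\rfloor+m\leq(\ell+2)m$ can fail. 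It really can fail: the paper's own proof has a separate Case 1 precisely to handle $\sigma(M_v)>(\ell+2)m$, and there it must use a different compression (into $k+1$ or $k+2$ slots with $k\geq\ell+1$, defined by $km<\sigma(M_v)\leq(k+2)m$ for all $v$, which one deduces from (G2)) and a different structural input, Lemma~\ref{Lemma:MinimalFProperties}(b), rather than part (a). That branch of the argument is not a corner case one can wave away; it requires its own calculation, and your plan has no mechanism for it.

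Two smaller concerns. First, your compression is under-specified in a way that matters: the paper's choice $y_v=\max\bigl(\sigma_{\ell+1}(M_v)-\ell m,\ \lceil\tfrac{1}{2}(\sigma(M_v)-\ell m)\rceil\bigr)$ is engineered so that either $M^*_v$ has its last two entries as near-equal as possible (triggering Lemma~\ref{Lemma:MinimalFProperties}(a)(ii), which rules out $f(v)=\ell+1$) or $\sigma_{\ell+1}(M^*_v)=\sigma_{\ell+1}(M_v)$; the latter identity is then used repeatedly to control $\Delta^-_f$. ``Distribute the remaining weight into at most two elements'' does not automatically give either property. Second, the final case analysis is not driven merely by which $n_i$ vanish: the critical split is whether $n_{\ell+2}>\ell(n-m-1)$ (which via part (a)(i) forces $V_\ell\cup V_{\ell+1}\cup V_{\ell+2}=V$) or not (forcing $V_0\cup V_{\ell+1}\cup V_{\ell+2}=V$), and in the latter subcase a further split on $|M_w|\geq\ell+2$ versus $|M_w|=\ell+1$ is needed, each using different properties among (G2)--(G5). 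Your sketch gestures at this but doesn't identify the correct dichotomies, and given that the bound $\alpha'=\lambda/(\lambda+1)$ is tight, the slack is exactly zero and the wrong case split will not close.
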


\begin{proof}
The result is obvious for $n \leq 4$, so we may assume that $n \geq 5$. Let $m=\lfloor\frac{\lambda(n-1)+1}{\lambda+1}\rfloor$ and $\ell=\frac{\l-1}{2}$.
By Lemma~\ref{Lemma:bloodyObvious} we may assume that $x+y>m$ for any distinct (but possibly equal) $x,y \in M$. Let $V$ be a set of $n$ vertices. By Lemma~\ref{Lemma:GreedyAllocationProperties}, a greedy assignment $\{M_v:v \in V\}$ of the elements of $M$ to multisets can be produced in polynomial time. We will first establish that $\sigma(M_v) > \ell m$ for each $v\in V$. If, to the contrary, $\sigma(M_w) \leq \ell m$ for some $w \in V$, then $\sigma(M_v) \leq (\ell+1)m$ for each $v\in V \setminus \{w\}$ by (G2). So $\sigma(M) \leq (n-1)(\ell+1)m+\ell m$ and, using $m \leq \frac{\l(n-1)+1}{\l+1}$, it can be seen that $\l\binom{n}{2}-\sigma(M) \geq \frac{\l-1}{2(\l+1)}(n-2)>0$, contradicting our hypotheses.

In each of two cases below we will define, for each $v \in V$, a ``compressed'' multiset $M^*_v$ of integers from $\{1,\ldots,m\}$ such that $\sigma(M^*_v)=\sigma(M_v)$ and $\sigma_i(M^*_v) \geq \sigma_i(M_v)$ for each $i \in \{1,\ldots,|M^*_v|\}$. As discussed in Section~\ref{Section:Strategy}, by Lemma~\ref{Lemma:multistarDecomp} it will suffice to find a star $\mathcal{K}$-decomposition where $\mathcal{K}$ is the multigraph $\l K_V$ equipped with the multisets $\{M^*_v:v \in V\}$.

For each case we will define $f$ to be a minimal restriction function for $\mathcal{K}$ given by Lemma~\ref{Lemma:MinimalFProperties}. For each nonnegative integer $i$, let $V_i=f^{-1}(i)$ and $n_i=|V_i|$. By Theorem~\ref{Theorem:RealTruthLambda}, it will suffice to show $\Delta_f(\mathcal{K}) \geq 0$ (note that Theorem~\ref{Theorem:RealTruthLambda} guarantees a polynomial time construction). Suppose for a contradiction that $\Delta_f(\mathcal{K}) <0$. In each case below we will obtain the required contradiction by applying Lemma~\ref{Lemma:dualThing} and obtaining an upper bound for $\Delta_f^-(\mathcal{K})$. Note that $f(v) \leq \ell$ for some $v \in V$, because otherwise $\Delta^+_f(\mathcal{K})=\l\binom{n}{2}$ contradicting $\Delta_f(\mathcal{K})<0$. Because $\sigma(M^*_v) = \sigma(M_v)$ for each $v \in V$, we will use $\sigma(M_v)$ in preference to $\sigma(M^*_v)$ for the sake of clean notation.

\textbf{Case 1.} Suppose that $\sigma(M_{v}) > (\ell+2)m$ for some $v \in V$. By (G2), we must have $km < \sigma(M_v) \leq (k+2)m$ for each $v \in V$ and some $k \geq \ell+1$. For each $v \in V$, let
\[M^*_v=\left\{
  \begin{array}{ll}
    \{m^{[k]},\sigma(M_v)-k m\} & \hbox{if $k m < \sigma(M_v) \leq (k+1) m$;} \\
    \{m^{[k+1]},\sigma(M_v)-(k+1) m\} & \hbox{if $(k+1) m < \sigma(M_v) \leq (k+2) m$.}
  \end{array}
\right.\]
Note $\{m^{[k]}\} \subseteq M^*_v$ for each $v \in V$. So, by Lemma~\ref{Lemma:MinimalFProperties}(b), we can take $f$ to be a minimal restriction function for $\mathcal{K}$ such that $f(v)=0$ or $f(v) \geq \ell+2$ for each $v \in V$. Thus, by Lemma~\ref{Lemma:dualThing}, because $\sum_{v \in V_0}\sigma(M_v) \leq \sum_{v \in V}(\sigma(M_v)-\sigma_{f(v)}(M_v))$, we have
\begin{equation}\label{Equation:LambdaOddNewCase}
\sum_{v \in V_0}\sigma(M_v) \leq \l\mbinom{n}{2}-\Delta^-_f(\mathcal{K})  < \l\mbinom{n_0}{2} + (\ell-1)n_0(n-n_0).
\end{equation}
It follows that $\sigma(M_w) < \frac{\l}{2}(n_0-1)+(\ell-1)(n-n_0)$ for some $w \in V_0$ and substituting $\ell=\frac{\l-1}{2}$ shows this latter expression is equal to $\frac{\l}{2}(n-1)-\frac{3}{2}(n-n_0)$. So $\sigma(M_v) \leq \frac{\l}{2}(n-1)-\frac{3}{2}(n-n_0)+m$ for each $v \in V$ by (G2). Thus, $\Delta^-_f(\mathcal{K}) \leq (n-n_0)(\frac{\l}{2}(n-1)-\frac{3}{2}(n-n_0)+m)$. Adding this to the second and third expressions in \eqref{Equation:LambdaOddNewCase}, using $\ell=\frac{\l-1}{2}$ and $\binom{n}{2}=\binom{n_0}{2}+\frac{1}{2}(n-n_0)(n+n_0-1)$, we obtain the contradiction
\[\l\mbinom{n}{2}  < \l\mbinom{n}{2}-\tfrac{1}{2}(n-n_0)(3n-2m) \leq \l\mbinom{n}{2}.\]

\textbf{Case 2.} Suppose that $\sigma(M_v) \leq (\ell+2)m$ for each $v \in V$. Recall that $\sigma(M_v) > \ell m$ for each $v \in V$. For each $v \in V$, let $y_v=\max(\sigma_{\ell+1}(M_v)-\ell m,\lceil\frac{1}{2}(\sigma(M_v)-\ell m)\rceil)$ and
\[M^*_v=\left\{
  \begin{array}{ll}
    \{m^{[\ell]},y_v\} & \hbox{if $\sigma(M_v)=\ell m +y_v$;} \\
    \{m^{[\ell]},y_v,\sigma(M_v)-\ell m -y_v\} & \hbox{if $\sigma(M_v) > \ell m +y_v$.}
  \end{array}
\right.\]
(Intuitively, $y_v$ is the smallest integer that ensures $\sigma_{\ell+1}(M^*_v) \geq \sigma_{\ell+1}(M_v)$ and $y_v \geq \sigma(M^*_v) - \sigma_{\ell+1}(M^*_v)$.) For each $v \in V$, either $M^*_v=\{m^{[\ell]},\lceil\frac{1}{2}(\sigma(M_v)-\ell m)\rceil,\lfloor\frac{1}{2}(\sigma(M_v)-\ell m)\rfloor\}$ or $\sigma_{\ell+1}(M^*_v)=\sigma_{\ell+1}(M_v)$. So by Lemma~\ref{Lemma:MinimalFProperties} we can take $f$ to be a minimal restriction function for $\mathcal{K}$ satisfying (a)(i) and (a)(ii) of Lemma~\ref{Lemma:MinimalFProperties}. Thus, because $f$ satisfies the latter of these, $\sigma_{\ell+1}(M^*_v)=\sigma_{\ell+1}(M_v)$ for each $v \in V_{\ell+1}$ and we will use $\sigma_{\ell+1}(M_v)$ in preference to $\sigma_{\ell+1}(M^*_v)$. We consider two cases according to the value of $n_{\ell+2}$.

\textbf{Case 2a.} Suppose that $n_{\ell+2} > \ell(n-m-1)$. Then $V_\ell \cup V_{\ell+1} \cup V_{\ell+2}  = V$ by Lemma~\ref{Lemma:MinimalFProperties}(a)(i) because $\sigma_\ell(M^*_v)=\ell m$ for each $v \in V$. By Lemma~\ref{Lemma:dualThing}, again using $\sigma_\ell(M^*_v)=\ell m$ for each $v \in V$, we have
\begin{equation}\label{Equation:LambdaOddExistence1}
\sum_{v \in V_{\ell}} (\sigma(M_v)-\ell m) \leq \l\mbinom{n}{2}-\Delta^-_f(\mathcal{K}) < \mbinom{n_{\ell}}{2}.
\end{equation}
So $\sigma(M_w)-\ell m < \frac{1}{2}(n_\ell-1)$ and hence $\sigma(M_w) < \ell m+\frac{1}{2}(n_\ell-1)$ for some $w \in V_{\ell}$. Thus, by (G2), $\sigma(M_v) < (\ell+1)m+\frac{1}{2}(n_\ell-1)$ for each $v \in V$ and hence $\Delta^-_f(\mathcal{K}) \leq  n_\ell \ell m +(n-n_{\ell})((\ell+1)m+\frac{1}{2}(n_\ell-1))$. Adding this to the second and third expression in \eqref{Equation:LambdaOddExistence1},
\begin{align*}
\l\mbinom{n}{2} &< \mbinom{n_\ell}{2}+n_\ell\ell m+(n-n_\ell)((\ell+1)m+\tfrac{1}{2}(n_\ell-1))\\
&=\tfrac{1}{2}n(n_\ell-1)+\tfrac{1}{2}m\bigl(n(\l+1) - 2n_\ell\bigr)\\
&\leq \l\mbinom{n}{2}-\mfrac{\l-1}{2(\l+1)}n_\ell(n-2) \leq \l\mbinom{n}{2},
\end{align*}
where the equality follows using $\ell=\frac{\l-1}{2}$ and the second inequality follows using $m \leq \frac{\l(n-1)+1}{\l+1}$.

\textbf{Case 2b.} Suppose that $0 \leq n_{\ell+2} \leq \ell(n-m-1)$. Then $V_0 \cup V_{\ell+1} \cup V_{\ell+2}  = V$ by Lemma~\ref{Lemma:MinimalFProperties}(a)(i) because $\sigma_\ell(M^*_v)=\ell m$ for each $v \in V$. Let $w$ be an element of $V_0$ such that $\sigma(M_w) \leq \sigma(M_v)$ for each $v \in V_0$. By Lemma~\ref{Lemma:dualThing} we have
\begin{equation}\label{Equation:LambdaOddExistence2}
\sum_{v \in V_0} \sigma(M_v) \leq \l\mbinom{n}{2}-\Delta^-_f(\mathcal{K}) < \lambda\mbinom{n_{0}}{2}+\ell n_0n_{\ell+1}+(\ell-1)n_0n_{\ell+2}.
\end{equation}
So, using $n_{\ell+1}=n-n_0-n_{\ell+2}$, $\sigma(M_w) < \frac{\l}{2}(n_0-1)+\ell n_{\ell+1}+(\ell-1)n_{\ell+2}=\ell(n-1)+\frac{1}{2}(n_0-1)- n_{\ell+2}$. We will use this fact often. Also, adding $\Delta^-_f(\mathcal{K})$ to the second and third expression in \eqref{Equation:LambdaOddExistence2},
\begin{equation}\label{Equation:LambdaOddExistence3}
\l\mbinom{n}{2} < \lambda\mbinom{n_{0}}{2}+\ell n_0n_{\ell+1}+(\ell-1)n_0n_{\ell+2} + \sum_{v \in  V_{\ell+1}} \sigma_{\ell+1}(M_v)+\sum_{v \in V_{\ell+2}} \sigma(M_v).
\end{equation}
We now consider two subcases according to whether $|M_w| = \ell +1$.

\textbf{Case 2b(i).} Suppose that $|M_w| \geq \ell +2$. Then $\sigma_{\ell+1}(M_v) \leq \sigma(M_w)$ for each $v \in V_{\ell+1}$ for otherwise (G2) or (G5) would have been violated immediately after $M_w$ was assigned its $(\ell+2)$nd element. By (G3), for each $v \in V_{\ell+2}$, we have that $\sigma(M_v) \leq \frac{\ell+2}{\ell+1}\sigma(M_w)$ because $|M_v| \geq \ell+2$. Thus, from \eqref{Equation:LambdaOddExistence3},
\begin{align*}
\l\mbinom{n}{2} &< \lambda\mbinom{n_{0}}{2}+\ell n_0n_{\ell+1} + (\ell-1)n_0n_{\ell+2} + n_{\ell+1}\sigma(M_w) + \tfrac{\ell+2}{\ell+1}n_{\ell+2}\sigma(M_w) \nonumber\\
&= \mfrac{n_0}{2} \Bigl(n_0 - 2n_{\ell+2} + (\l - 1) (n - 1) - 1\Bigr) +
 \left(n - n_0 + \mfrac{2n_{\ell+2}}{\l + 1}\right)\sigma(M_w)\nonumber\\
&< \left(\mfrac{n}{2}+\mfrac{n_{\ell+2}}{\l+1}\right)\left((\l-1)(n-1)+n_0-2n_{\ell+2}-1\right) \\
&\leq \l\mbinom{n}{2} - \mfrac{n_{\ell+2}}{(\l+1)}(\ell(n+2)+2n+3n_{\ell+2}+1) \leq \l\mbinom{n}{2},
\end{align*}
where the equality is obtained using $n_{\ell+1}=n-n_0-n_{\ell+2}$ and $\ell=\frac{\l-1}{2}$, the second inequality is obtained using $\sigma(M_w) < \ell(n-1)+\frac{1}{2}(n_0-1)- n_{\ell+2}$ and $\ell=\frac{\l-1}{2}$, and the third inequality is obtained using $n_0 \leq n-n_{\ell+2}$.

\textbf{Case 2b(ii).} Suppose that $|M_w| = \ell +1$. Let $s=\min(M_w)$. Then, for each $v \in V_{\ell+2}$, $\sigma(M_v) \leq \sigma(M_w)+\min(M_v)$ by (G2) and $\min(M_v) \leq s$ because $|M_v| > |M_w|$.
Also, $\sigma_{\ell+1}(M_v) \leq \sigma(M_w)+m-s$ for each $v \in V_{\ell+1}$ using (G4).  Thus, from \eqref{Equation:LambdaOddExistence3},
\begin{align}
\l\mbinom{n}{2} &< \lambda\mbinom{n_{0}}{2}+\ell n_0n_{\ell+1} + (\ell-1)n_0n_{\ell+2} + n_{\ell+1}(\sigma(M_w)+m-s) + n_{\ell+2}(\sigma(M_w)+s) \nonumber \\
&= s(n_0+2n_{\ell+2}-n) + m(n-n_0-n_{\ell+2})+\sigma(M_w)(n-n_0)+n_0(\ell(n-1)+\tfrac{n_0-1}{2}-n_{\ell+2}), \label{Equation:LambdaOddExistence4}
\end{align}
where the equality is obtained using $n_{\ell+1}=n-n_0-n_{\ell+2}$. We will obtain a contradiction from \eqref{Equation:LambdaOddExistence4}. The sign of $n_0+2n_{\ell+2}-n$ determines whether we require an upper or lower bound for $s$.

If $n_0+2n_{\ell+2} \geq n$, then using first $s \leq \frac{1}{\ell+1}\sigma(M_w)$ and next $\sigma(M_w) < \ell(n-1)+\frac{1}{2}(n_0-1)- n_{\ell+2}$ and $m \leq \frac{\l(n-1)+1}{\l+1}$, we have from \eqref{Equation:LambdaOddExistence4} that
\begin{align*}
\l\mbinom{n}{2} &< \mfrac{1}{2(\l+1)}\left(n(\l^2(n-1)+n-\l+2)+n_0((\l-3)n+2n_0-2)-2n_{\ell+2}(n+\l+4n_{\ell+2}+1)\right)\\
&\leq \l\mbinom{n}{2}-\mfrac{n-n_0}{2(\l+1)}(\l(n+1)+2n-1) \leq \l\mbinom{n}{2},
\end{align*}
where the second inequality is obtained by recalling that $n_{\ell+2} \geq \tfrac{1}{2}(n-n_0)$ and hence expression is maximised when $n_{\ell+2} = \tfrac{1}{2}(n-n_0)$.

If $n_0+2n_{\ell+2} <n$, then using first $s \geq \sigma(M_w)-\ell m$ and next $\sigma(M_w) < \ell(n-1)+\frac{1}{2}(n_0-1)- n_{\ell+2}$, we have from \eqref{Equation:LambdaOddExistence4} that
\begin{align*}
\l\mbinom{n}{2} &<\tfrac{1}{2}m(\l(n-n_0-2n_{\ell+2})+n-n_0)+\tfrac{1}{2}(n_0+2n_{\ell+2})((\l-1)(n-1)+n_0-2n_{\ell+2}-1)\\
&\leq \l \mbinom{n}{2} - n_{\ell+2}(2n_{\ell+2} + \tfrac{n+2\l}{\l + 1})-\tfrac{1}{2}(n_0-1)(n-n_0) < \l \mbinom{n}{2},
\end{align*}
where the second inequality follows using $m \leq \frac{\l(n-1)+1}{\l+1}$ and the third follows using $n_0 \geq 1$ (recall that $f(v) \leq \ell$ for some $v \in V$).
\end{proof}

\begin{proof}[\textbf{\textup{Proof of Theorem~\ref{Theorem:NPComplete} when $\lambda$ is odd}}] If $\alpha \leq \frac{\l}{\l+1}$, Lemma~\ref{Lemma:LambdaOddExistence} shows that every instance of \textsc{$(\lambda,\alpha)$-star decomp} is feasible and that the required decompositions can be constructed in polynomial time. If $\alpha > \frac{\l}{\l+1}$ and $\{a_1,\ldots,a_{3q}\}$ is an instance of \textsc{3-partition}, then we can apply Lemma~\ref{Lemma:LambdaOddReduction}, with $n$ chosen to be polynomial in $a_1+\cdots+a_{3q}$ but sufficiently large that $m<\alpha(n-1)$, in order to reduce the instance of \textsc{3-partition} to an instance of \textsc{$(\lambda,\alpha)$-star decomp}.
\end{proof}

\section{Proof of Theorem~\ref{Theorem:NPComplete} when \texorpdfstring{$\lambda$}{lambda} is even}\label{Section:LambdaEven}

We begin with a result which guarantees the existence of a minimal restriction function with certain properties.

\begin{lemma}\label{Lemma:MinimalFProperties1LambdaEven}
Let $n$ and $\lambda$ be positive integers such that $\lambda$ is even, let $\ell=\frac{\l}{2}$, and let $V$ be a set of $n$ vertices. Let $\mathcal{K}$ be the multigraph $\lambda K_V$ equipped with multisets $\{M_v:v\in V\}$ of integers from $\{1,\ldots,n-1\}$ such that $\sum_{v \in V}\sigma(M_v)=\l\binom{n}{2}$.
\begin{itemize}
    \item[\textup{(a)}]
If $|M_v| \leq \ell+1$ for each $v \in V$, then there is a minimal restriction function $f$ for $\mathcal{K}$ such that, for each $v \in V$,
\begin{itemize}
    \item[\textup{(i)}]
$f(v) \in \left\{
  \begin{array}{ll}
    \{0,\ell+1\} & \hbox{if $\sigma_{\ell}(M_v) \leq \ell(n-1)-|f^{-1}(\ell+1)|$;} \\
    \{\ell,\ell+1\} & \hbox{if $\sigma_{\ell}(M_v) > \ell(n-1)-|f^{-1}(\ell+1)|$;}
  \end{array}
\right.$
    \item[\textup{(ii)}]
$f(v) \neq \ell$ if $|M_v|=\ell+1$ and $\min(M_v) = \lfloor\frac{1}{2}(\sigma(M_v)-\sigma_{\ell-1}(M_v))\rfloor$.
\end{itemize}
    \item[\textup{(b)}]
If there are positive integers $k \geq \ell$ and $m < n$ such that, for each $v \in V$, $|M_v| \leq k+2$ and $\sigma_{i}(M_v)=i m$ for $i \in \{0,\ldots,k\}$, then there is a minimal restriction function $f$ for $\mathcal{K}$ such that $f(v)=0$ or $f(v) \geq \ell+1$ for each $v \in V$.
\end{itemize}
\end{lemma}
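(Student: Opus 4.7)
The plan is to mirror the proof of Lemma~\ref{Lemma:MinimalFProperties} with the obvious parity shift, since here $\lambda = 2\ell$ rather than $2\ell+1$ and so the cap in the $\min(\cdot,\lambda)$ expressions activates one step earlier. Let $f$ be a minimal restriction function for $\mathcal{K}$ that first maximises $|f^{-1}(0)|$ and, subject to that, minimises $|f^{-1}(\ell)|$. Write $n_i = |f^{-1}(i)|$ throughout.

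For part (a), fix any $w \in V$ and let $f_i$ denote the restriction function that agrees with $f$ off $w$ and has $f_i(w) = i$. Because $|M_v| \leq \ell + 1$ for every $v$, the cap $\min(i + f(v), 2\ell)$ only activates when $i = \ell$ with $f(v) = \ell+1$ (loss of $1$) or when $i = \ell+1$ with $f(v) \in \{\ell, \ell+1\}$ (loss of $1$ or $2$). A routine edge-by-edge count therefore yields $\Delta^+_{f_i}(\mathcal{K}) - \Delta^+_{f_0}(\mathcal{K}) = i(n-1)$ for $i \leq \ell - 1$, and $\Delta^+_{f_\ell}(\mathcal{K}) - \Delta^+_{f_0}(\mathcal{K}) = \ell(n-1) - n_{\ell+1}$ whenever $f(w) \in \{0, \ell\}$. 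Combined with $\Delta^-_{f_i}(\mathcal{K}) - \Delta^-_{f_0}(\mathcal{K}) = \sigma_i(M_w) \leq i(n-1)$ for $i \leq \ell-1$, the maximality of $|f^{-1}(0)|$ rules out $f(w) \in \{1, \ldots, \ell-1\}$, giving $f(v) \in \{0, \ell, \ell+1\}$ for every $v$. The threshold in (a)(i) then comes directly from comparing $f_0$ with $f_\ell$: if $\sigma_\ell(M_w) \leq \ell(n-1) - n_{\ell+1}$, then $\Delta_{f_0}(\mathcal{K}) \leq \Delta_{f_\ell}(\mathcal{K})$ and maximality of $|f^{-1}(0)|$ forces $f(w) \neq \ell$; if the strict reverse inequality holds, then $\Delta_{f_\ell}(\mathcal{K}) < \Delta_{f_0}(\mathcal{K})$, so minimality of $\Delta_f$ forces $f(w) \neq 0$. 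For (a)(ii), observe that under $|M_w| = \ell+1$ the hypothesis $\min(M_w) = \lfloor\tfrac{1}{2}(\sigma(M_w) - \sigma_{\ell-1}(M_w))\rfloor$ is equivalent to the last two entries of $M_w$ differing by at most one; if $f(w) = \ell$, Lemma~\ref{Lemma:RestrictionRestriction}(a) or (b) then produces a minimal restriction function with the value at $w$ changed to $\ell-1$ or $\ell+1$, either of which violates the lexicographic choice of $f$.

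For part (b), the hypothesis $\sigma_i(M_v) = im$ for $i \leq k$ forces the top $k$ entries of each $M_v$ to equal $m$; iterated use of Lemma~\ref{Lemma:RestrictionRestriction}(a) together with the maximality of $|f^{-1}(0)|$ then gives $f(v) \in \{0, k, k+1, k+2\}$ for each $v$. When $k \geq \ell + 1$ we are done, so assume $k = \ell$. Some vertex $w_1$ must satisfy $f(w_1) = 0$, since otherwise $\Delta^+_f(\mathcal{K}) = \lambda\binom{n}{2}$ would propagate to the contradiction that $f$ is uniformly $0$, exactly as in the odd-case proof. A contribution count now gives $\Delta^+_{f_\ell}(\mathcal{K}) - \Delta^+_{f_0}(\mathcal{K}) = \ell(n-1) - n_{\ell+1} - 2n_{\ell+2}$, independently of whether $f(w) = 0$ or $f(w) = \ell$, because neither of these values lies in the capped range $\{\ell+1, \ell+2\}$; this is where the even case becomes cleaner than the odd, as no $\delta$-correction survives. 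Applying this identity at $w_1$ and invoking the maximality of $|f^{-1}(0)|$ yields $\ell(n - m - 1) \geq n_{\ell+1} + 2n_{\ell+2}$; if some $w_2$ satisfied $f(w_2) = \ell$, the same identity at $w_2$ then gives $\Delta_{f_0}(\mathcal{K}) \leq \Delta_{f_\ell}(\mathcal{K})$, so resetting $f(w_2)$ to $0$ yields another minimal restriction function with strictly larger $|f^{-1}(0)|$, contradicting the choice of $f$.

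The main delicacy is the bookkeeping of which edges get capped by $\lambda = 2\ell$; because this cap activates one step earlier than in the $\lambda$-odd setting, the threshold constants and the enumerated value sets for $f(v)$ shift by one throughout, but the overall structural flow (select a doubly-lexicographically-extreme minimal $f$, then read off its support from a contribution count) is identical to the odd-case proof.
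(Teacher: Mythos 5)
Your proof follows the paper's argument precisely: you choose the same doubly-extremal minimal restriction function (maximise $|f^{-1}(0)|$, then minimise $|f^{-1}(\ell)|$), use the same $\Delta^+_{f_i}$ versus $\Delta^-_{f_i}$ accounting to rule out values $1,\ldots,\ell-1$ and to read off the threshold in (a)(i), invoke Lemma~\ref{Lemma:RestrictionRestriction} in the same way for (a)(ii), and run the same two-step argument for (b) (first show $\ell(n-m-1)\geq d$ via a vertex with $f$-value $0$, then derive a contradiction from any vertex with $f$-value $\ell$). The one slip is notational: your formula $\Delta^+_{f_\ell}(\mathcal{K})-\Delta^+_{f_0}(\mathcal{K})=\ell(n-1)-n_{\ell+1}-2n_{\ell+2}$ in part (b) is only correct for $\ell\geq 2$, since when $\ell=1$ a vertex with $f(v)=\ell+2$ contributes a loss of $\min(\ell,2)=1$ rather than $2$ (the paper writes this uniformly as $d=\sum_{j\in\{1,2\}}\min(\ell,j)\,n_{\ell+j}$), but as your argument uses this quantity only as a constant independent of the chosen vertex $w$, the reasoning is unaffected.
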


\begin{proof}
Of all the minimal restriction functions for $\mathcal{K}$, let $f$ be one such that $|f^{-1}(0)|$ is maximised and, subject to this, $|f^{-1}(\ell)|$ is minimised. Let $n_i=|f^{-1}(i)|$ for each nonnegative integer $i$.

We first prove (a). Suppose in accordance with (a) that $|M_v| \leq \ell+1$ for each $v \in V$. Suppose there is a vertex $w \in V$ such that $f(w) \in \{0,\ldots,\ell\}$. For each $i \in \{0,\ldots,\ell\}$, let $f_i$ be the restriction function for $\mathcal{K}$ defined by $f_i(v)=f(v)$ for $v \in V \setminus \{w\}$ and $f_i(w)=i$. Then $\Delta^-_{f_i}(\mathcal{K})=\Delta^-_{f_0}(\mathcal{K})+\sigma_i(M_w)$ for each $i \in \{0,\ldots,\ell\}$. Because $f(v) \leq |M_v| \leq \ell+1$ for each $v \in V$, $\Delta^+_{f_i}(\mathcal{K})=\Delta^+_{f_0}(\mathcal{K})+i(n-1)$ for each $i \in \{0,\ldots,\ell-1\}$ and $\Delta^+_{f_\ell}(\mathcal{K})=\Delta^+_{f_0}(\mathcal{K})+\ell(n-1)-n_{\ell+1}$. Thus,
\[\Delta_{f_i}(\mathcal{K}) = \left\{
  \begin{array}{ll}
    \Delta_{f_0}(\mathcal{K}) + i(n-1) - \sigma_i(M_w) & \hbox{if $i \in \{0,\ldots,\ell-1\}$;} \\
    \Delta_{f_0}(\mathcal{K}) + \ell(n-1) - n_{\ell+1} - \sigma_\ell(M_w) & \hbox{if $i=\ell$.}
  \end{array}
\right.\]
So, for each $i \in \{1,\ldots,\ell-1\}$, $\Delta_{f_0}(\mathcal{K}) \leq \Delta_{f_i}(\mathcal{K})$ because $\sigma_{i}(M_w) \leq i(n-1)$. Furthermore, $\Delta_{f_\ell}(\mathcal{K}) < \Delta_{f_0}(\mathcal{K})$ if and only if $\sigma_{\ell}(M_w) > \ell(n-1)-n_{\ell+1}$. In view of our definition of $f$, this establishes (a)(i).

We now prove (a)(ii). Suppose further, for a contradiction, that $f(w) =\ell$ and that $M_w=\{m_1,\ldots,m_{\ell+1}\}$ where $m_1 \geq \cdots \geq m_{\ell+1}$ and $m_{\ell+1} = \lfloor\frac{1}{2}(\sigma(M_w)-\sigma_{\ell-1}(M_w))\rfloor$. Then $m_{\ell} = \lceil\frac{1}{2}(\sigma(M_w)-\sigma_{\ell-1}(M_w))\rceil$ and hence $m_{\ell+1} \in \{m_{\ell}-1,m_\ell\}$. So by Lemma~\ref{Lemma:RestrictionRestriction}, one of $f_{\ell-1}$ or $f_{\ell+1}$ is also a minimal restriction function for $\mathcal{K}$, contradicting our choice of $f$. This establishes (a)(ii).

Finally we prove (b). Suppose in accordance with (b) that there are positive integers $k \geq \ell$ and $m < n$ such that, for each $v \in V$, $|M_v| \leq k+2$ and $\sigma_{i}(M)=i m$ for $i \in \{0,\ldots,k\}$. Note that $f(v) \in \{0,k,k+1,k+2\}$ for each $v \in V$ by Lemma~\ref{Lemma:RestrictionRestriction}(a). So (b) follows immediately if $k \geq \ell+1$ and it suffices to assume $k = \ell$ and prove that $f(v) \neq \ell$ for each $v \in V$.

Suppose there is a vertex $w \in V$ such that $f(w) \in \{0,\ell\}$. For each $i \in \{0,\ell\}$, let $f_i$ be the restriction function for $\mathcal{K}$ defined by $f_i(v)=f(v)$ for $v \in V \setminus \{w\}$ and $f_i(w)=i$. Then $\Delta^-_{f_\ell}(\mathcal{K}) = \Delta^-_{f_0}(\mathcal{K})+\sigma_\ell(M_w) = \Delta^-_{f_0}(\mathcal{K})+\ell m$ and, because $f(v) \leq |M_v| \leq \ell+2$ for each $v \in V$, $\Delta^+_{f_\ell}(\mathcal{K}) = \Delta^+_{f_0}(\mathcal{K})+\ell(n-1)-d$, where $d=\sum_{j\in \{1,2\}}\min(\ell,j)n_{\ell+j}$. Thus,
\[\Delta_{f_\ell}(\mathcal{K}) = \Delta_{f_0}(\mathcal{K}) + \ell(n-m-1) - d.\]
 So if $\ell (n-m-1) < d$, then our definition of $f$ would imply that $f(v) \geq \ell$ for all $v \in V$ and we could conclude successively that $\Delta^+_f(\mathcal{K})=\l\binom{n}{2}$, that $\Delta_f(\mathcal{K})=0$, and the contradiction that $f$ is uniformly 0. Thus it must be that $\ell (n-m-1) \geq d$ and hence by our definition of $f$ that $f(v) \neq \ell$ for each $v \in V$.
\end{proof}

In Lemma~\ref{Lemma:LambdaEvenNPComplete} we will establish that \textsc{$(\lambda,\alpha)$-star decomp} is $\mathsf{NP}$-complete for $\lambda$ even and  $\alpha>1-\tfrac{2}{\l}(3-2\sqrt{2})$. The bulk of this work is accomplished in Lemma~\ref{Lemma:LambdaEvenReduction} which allows us to reduce an instance of \textsc{3-partition} to an instance of \textsc{$(\lambda,\alpha)$-star decomp} provided we can find suitable integers $n$, $m$ and $r$.

\begin{lemma}\label{Lemma:LambdaEvenReduction}
Let $\lambda$ be a positive even integer, let $\{a_1,\ldots,a_{3q}\}$ be an instance of \textsc{3-partition}, let $\ell=\frac{\l}{2}$, and let $a=\frac{1}{q}(a_1+\cdots+a_{3q})$. Suppose there are positive integers $n$, $m$ and $r$ such that $\ell q < r < n-q$, $q < m < n-1$ and
\begin{align*}
  c &= \mfrac{\ell(n-q)(n-q-1)-r(m-q)}{(\ell+1)(n-q)-2r}+q \quad\mbox{and} \\[1mm]
  b &= (\ell-1)(n-c-1)+r+q-c-1-(\ell+1)qa
\end{align*}
are integers satisfying $2c>m+q$, $\ell(c-q) > (\ell-1)(m-q)$, $m>c+b+(\ell+1)qa$ and $b>(\ell+1)qa+\ell(q-1)$. Then there is a decomposition of $\lambda K_n$ into stars of sizes $\{m^{[r]},c^{[(\ell+1)n-2r-q]},b^{[q]},(\ell+1)qa_1,\ldots,(\ell+1)qa_{3q}\}$ if and only if $\{a_1,\ldots,a_{3q}\}$ is a feasible instance of \textsc{3-partition}.
\end{lemma}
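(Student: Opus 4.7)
The approach closely parallels the proof of Lemma~\ref{Lemma:LambdaOddReduction}. We aim to show that in any decomposition of $\lambda K_n$ into the specified star sizes, there is a distinguished set $V' \subseteq V$ of size $q$ such that each $u \in V'$ centres exactly one $b$-star, $\ell$ stars of size $c$, and a three-element submultiset $A_u$ of $\{(\ell+1)qa_1,\ldots,(\ell+1)qa_{3q}\}$ with $\sigma(A_u) = (\ell+1)qa$, while the remaining $n-q$ vertices of $V'' = V \setminus V'$ centre only $m$- and $c$-stars. Feasibility of 3-partition is then equivalent to the existence of such a decomposition. The four stated inequalities together with the numerical definitions of $c$ and $b$ are engineered to enforce this rigid structure.

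For the ``only if'' direction, suppose a decomposition exists with vertex multisets $\{M_v:v\in V\}$. Since there are only $r < n-q$ $m$-stars in total, the number of vertices centring an $m$-star is at most $r$, so we can choose $V' \subseteq V$ with $|V'| = q$ such that no vertex of $V'$ centres an $m$-star, and set $V'' = V \setminus V'$. Applying Lemma~\ref{Lemma:partitionV}(a) to this partition gives $\sum_{x \in M''}(x-q) \leq \lambda\binom{n-q}{2}$, where $M'' = \biguplus_{v \in V''}M_v$. The identity $(c-q)D = \ell(n-q)(n-q-1) - r(m-q)$, where $D = (\ell+1)(n-q)-2r$, shows that the choice $M'' = \{m^{[r]}, c^{[D]}\}$ makes this bound tight, and the strict inequality $2c > m+q$ together with the fact that $x > q$ for every $x \in M$ allows one to rule out alternative forms of $M''$ that would exceed the bound. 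Once $M'' = \{m^{[r]}, c^{[D]}\}$ is established, Lemma~\ref{Lemma:partitionV}(b) yields $\sigma(M_u) \leq \lambda(n-1)-|M''| = (\ell-1)(n-1)+(\ell+1)(q-1)+r$ for each $u \in V'$, so the slack above the expected value $(\ell-1)(n-1)+r+q-1$ is exactly $\ell(q-1)$. Combining this with the hypotheses $m > c + b + (\ell+1)qa$ and $b > (\ell+1)qa + \ell(q-1)$, and noting that $\sigma(A_u) \equiv 0 \mod{(\ell+1)q}$, an exchange argument (analogous to the one in Lemma~\ref{Lemma:LambdaOddReduction}) forces $M_u = \{c^{[\ell]}, b\} \uplus A_u$ with $\sigma(A_u) = (\ell+1)qa$ for each $u \in V'$, giving the desired 3-partition.

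For the ``if'' direction, given a valid 3-partition $\{A_1,\ldots,A_q\}$, fix $V' = \{u_1,\ldots,u_q\}$, assign $M_{u_i} = \{c^{[\ell]}, b\} \uplus A_i$ to each $u_i$, and distribute the $r$ $m$-stars and $D$ $c$-stars among the vertices of $V''$ so that $|M_v|$ is as uniform as possible. By Lemma~\ref{Lemma:bloodyObvious} it suffices to find a star $\mathcal{K}$-decomposition, where $\mathcal{K}$ is $\lambda K_V$ equipped with compressed multisets $M^*_v$ satisfying $\sigma(M^*_v) = \sigma(M_v)$ and $\sigma_i(M^*_v) \geq \sigma_i(M_v)$ for all $i$. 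By Theorem~\ref{Theorem:RealTruthLambda} this reduces to showing $\Delta(\mathcal{K}) \geq 0$. Selecting a minimal restriction function $f$ via Lemma~\ref{Lemma:MinimalFProperties1LambdaEven}(a) confines $f(v)$ to $\{0, \ell, \ell+1\}$ with additional restrictions, and a case analysis of $\Delta_f(\mathcal{K})$ in terms of the fibre sizes $n_0, n_\ell, n_{\ell+1}$, using the inequality $\ell(c-q) > (\ell-1)(m-q)$ to relate $c$ to $m$, yields the required nonnegativity.

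The main obstacle is twofold. In the ``only if'' direction, the most delicate point is ruling out non-trivial alternative forms of $M''$ when applying Lemma~\ref{Lemma:partitionV}(a): one must carefully compare how $c$-, $b$-, and $(\ell+1)qa_j$-stars contribute to $\sum_{x \in M''}(x-q)$, and the inequality $2c > m+q$ is what makes $c$-stars strictly more efficient than their alternatives for filling $M''$ up to the allowed bound. In the ``if'' direction, verifying $\Delta_f(\mathcal{K}) \geq 0$ will likely require a case analysis on $(n_0, n_\ell, n_{\ell+1})$, and careful construction of the compressed multisets $M^*_v$ is needed so that clause (ii) of Lemma~\ref{Lemma:MinimalFProperties1LambdaEven}(a) excludes the problematic restriction functions where $f(v) = \ell$ on a vertex with $|M_v| = \ell+1$, in the spirit of the case analysis in the proof of Lemma~\ref{Lemma:LambdaOddReduction}.
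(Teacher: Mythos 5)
Your high-level plan mirrors the paper (apply Lemma~\ref{Lemma:partitionV} to pin down $M''$, then Lemma~\ref{Lemma:partitionV}(b) to constrain $\sigma(M_u)$ for $u\in V'$, and in the other direction use compression together with Lemma~\ref{Lemma:MinimalFProperties1LambdaEven} and Theorem~\ref{Theorem:RealTruthLambda}), but there is a genuine gap in your `only if' direction, and it is exactly at the point you yourself flag as delicate.

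You choose $V'$ to be any $q$ vertices that centre no $m$-star, and then claim that Lemma~\ref{Lemma:partitionV}(a) together with $2c>m+q$ lets you ``rule out alternative forms of $M''$ that would exceed the bound.'' But Lemma~\ref{Lemma:partitionV}(a) gives only an \emph{upper} bound on $\sum_{x\in M''}(x-q)$; it cannot exclude configurations in which that sum falls strictly \emph{below} $\lambda\binom{n-q}{2}$. With your choice of $V'$ nothing stops a vertex $v_0\in V'$ from centring $\ell+1$ or more $c$-stars (so that $\nu_c(M'')<(\ell+1)(n-q)-2r$), and since $b$ and each $(\ell+1)qa_i$ are smaller than $c$, the resulting $\sum_{x\in M''}(x-q)$ is even smaller --- i.e.\ safely inside the bound, not outside it. So your argument does not force $M''=\{m^{[r]},c^{[D]}\}$, and without that you cannot invoke Lemma~\ref{Lemma:partitionV}(b) or carry out the exchange argument. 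The paper avoids this by choosing $V'$ differently: it takes the $q$ vertices with the smallest values of $\nu_{\{m,c\}}(M_v)$, breaking ties by $\nu_m(M_v)$, and then works with the auxiliary quantity $\xi(S)=(m-q)\nu_m(S)+(c-q)\nu_c(S)$. Combining $\xi(M'')\leq\lambda\binom{n-q}{2}$ with the hypotheses $\ell(c-q)>(\ell-1)(m-q)$, $2c>m+q$, and $m>c$, it first establishes $\xi(M_v)=\ell(c-q)$ and hence $(\nu_m(M_v),\nu_c(M_v))=(0,\ell)$ for every $v\in V'$, and only then concludes $M''=\{m^{[r]},c^{[(\ell+1)(n-q)-2r]}\}$. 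That specific choice of $V'$ and the $\xi$-potential are the key new idea your sketch is missing.

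One smaller point: in the `if' direction you say the inequality $\ell(c-q)>(\ell-1)(m-q)$ is what delivers $\Delta_f(\mathcal{K})\geq 0$. In fact the paper's `if' direction does not use that hypothesis at all; it relies on $m<n-1$, $m>c$ (which follows from $m>c+b+(\ell+1)qa$), and the identity defining $c$ (which makes $\Delta_f(\mathcal{K})=0$ exactly when $n_0=q$). The hypothesis $\ell(c-q)>(\ell-1)(m-q)$ is used only in the `only if' direction's $\xi$-argument that you are missing.
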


\begin{proof}
Let $M$ be the multiset $\{m^{[r]},c^{[(\ell+1)n-2r-q]},b^{[q]},(\ell+1)qa_1,\ldots,(\ell+1)qa_{3q}\}$ and let $V$ be a set of $n$ vertices. Note that
\[\sigma(M)=mr+ c((\ell+1)n-2r-q)+bq+(\ell+1)q^2a=\lambda\mbinom{n}{2},\]
where the second equality follows by first applying the definition of $b$ and then applying the definition of $c$.

Our proof strategy is similar to the one we employed in the proof of Lemma~\ref{Lemma:LambdaOddReduction}. We will show that a decomposition of $\lambda K_n$ into stars of sizes given by $M$ exists if and only if $n-r-q$ vertices each have stars of sizes $\{c^{[\ell+1]}\}$ centred at them, $r$ vertices each have stars of sizes $\{m,c^{[\ell-1]}\}$ centred at them, and the remaining $q$ vertices each have stars of sizes $\{c^{[\ell]},b\} \uplus A_v$ centred at them, where $A_v$ is a subset of $\{(\ell+1)qa_1,\ldots,(\ell+1)qa_{3q}\}$ with $\sigma(A_v)=(\ell+1)qa$. The values of $m$, $c$ and $b$ and the multiset $M$ have been carefully chosen to ensure that this is the case. It is then not too hard to show that $\{(\ell+1)qa_1,\ldots,(\ell+1)qa_{3q}\}$ can be partitioned into $q$ such sets $A_v$ if and only if $\{a_1,\ldots,a_{3q}\}$ is a feasible instance of \textsc{3-partition}.

\textbf{`If' direction.} Suppose that $\{a_1,\ldots,a_{3q}\}$ is a feasible instance of \textsc{3-partition}. Then clearly there is a partition $\{A_1,\ldots,A_q\}$ of $\{(\ell+1)qa_1,\ldots,(\ell+1)qa_{3q}\}$ such that $\sigma(A_i)=(\ell+1)qa$ for each $i \in \{1,\ldots,q\}$. Let $\{V',V'',V'''\}$ be a partition of $V$ such that $|V'|=q$, $|V''|=r$ and $|V'''|=n-r-q$. By Lemma~\ref{Lemma:bloodyObvious}, it will suffice to show that there is a star $\mathcal{K}$-decomposition where $\mathcal{K}$ is the multigraph $\l K_V$ equipped with multisets $\{M_v:v \in V\}$ such that $M_v=\{c^{[\ell+1]}\}$ for $v \in V'''$, $M_v=\{m,c^{[\ell-1]}\}$ for $v \in V''$, $M_v=\{c^{[\ell-1]},c+b+(\ell+1)qa\}$ for $v \in V'$ (note that $c+b+(\ell+1)qa<m$ by our hypotheses). Let $f$ be a minimal restriction function for $\mathcal{K}$ given by Lemma~\ref{Lemma:MinimalFProperties1LambdaEven}(a) and let $V_i=f^{-1}(i)$ and $n_i=|V_i|$ for $i \in \{0,\ell,\ell+1\}$. By Theorem~\ref{Theorem:RealTruthLambda} it suffices to show that $\Delta_f(\mathcal{K}) \geq 0$. We may assume that $n_0 \geq 1$, for otherwise $\Delta^+_f(\mathcal{K})=\l\binom{n}{2}$ and hence $\Delta_f(\mathcal{K}) \geq 0$.

Now $\Delta^+_f(\mathcal{K})=\l\binom{n-n_0}{2}+\ell n_0n_\ell+(\ell+1) n_0n_{\ell+1}$. Note $V_{\ell+1} \subseteq V'''$ because $|M_v| = \ell$ for $v \in V' \cup V''$. By Lemma~\ref{Lemma:MinimalFProperties1LambdaEven}(a)(ii) it can be seen that $V_\ell \cap V''' = \emptyset$.
Further, $V' \subseteq V_0$ by Lemma~\ref{Lemma:MinimalFProperties1LambdaEven}(a)(i), because $n_{\ell+1} \leq |V'''| = n-r-q$ and $\ell c+b+(\ell+1)qa = \ell(n-1)-(n-r-q)$ by the definition of $b$. Thus we see that $V_\ell \subseteq V''$, in addition to $V_{\ell+1} \subseteq V'''$.
So, since $\sigma(M_v)=m+(\ell-1)c$ for each $v\in V''$ and $\sigma(M_v) = (\ell+1)c$ for each $v \in V'''$, we have
\[\Delta^-_f(\mathcal{K}) = n_\ell (m+(\ell-1)c) +n_{\ell+1}(\ell+1)c.\]
From this, our expression for $\Delta^+_f(\mathcal{K})$, and the fact that $n_\ell=n-n_{0}-n_{\ell+1}$ we see that
\begin{equation}\label{Equation:LambdaEvenReductionIfDir}
\Delta_f(\mathcal{K}) = (n-n_0)(\ell(n-c-1)-m+c)+n_{\ell+1}(n_0+m-2c ).
\end{equation}
We will use \eqref{Equation:LambdaEvenReductionIfDir} to show that $\Delta_f(\mathcal{K}) \geq 0$, considering two cases according to the value of $n_{\ell+1}$.

If $n_{\ell+1} \leq \ell(n-c-1)-m+c$, then $V_\ell=\emptyset$ by Lemma~\ref{Lemma:MinimalFProperties1LambdaEven}(a)(i), so $n_{\ell+1}=n-n_0$ and from \eqref{Equation:LambdaEvenReductionIfDir} we have
\[\Delta_f(\mathcal{K})\geq (n-n_0)(\ell(n-c-1)+n_0-c).\]
Thus $\Delta_f(\mathcal{K})\geq 0$ because our assumption that $n_{\ell+1} \leq \ell(n-c-1)-m+c$, together with $n_{\ell+1}=n-n_0$, implies that $n+m-c \leq \ell(n-c-1)+n_0$ and hence that $\ell(n-c-1)+n_0-c \geq n+m-2c > 0$.

If, on the other hand, $n_{\ell+1} >\ell(n-c-1)-m+c $, then $V_\ell=V''$ by Lemma~\ref{Lemma:MinimalFProperties1LambdaEven}(a)(i), so $n_{\ell+1}=n-r-n_0$ and from \eqref{Equation:LambdaEvenReductionIfDir} we have
\[\Delta_f(\mathcal{K}) \geq (n - n_0)(\ell(n - c - 1)+n_0+r-c) - r(n + m - 2c).\]
Because $q \leq n_0 \leq n-r$, the right hand expression is minimised either when $n_0=q$ or when $n_0=n-r$. When $n_0=q$, the expression is equal to $0$ by the definition of $c$. When $n_0=n-r$, the expression is $r((\ell-1)(n-c-1)+n-m-1)$, which is nonnegative because $n-1 > m > c$.

\textbf{`Only if' direction.} We do not retain any of the notation defined in the proof of the `if' direction. Suppose there is a star $\mathcal{K}$-decomposition, where $\mathcal{K}$ is $\lambda K_V$ equipped with some multisets $\{M_v:v \in V\}$ such that $\biguplus_{v \in V}M_v=M$.

Let $\{V',V''\}$ be a partition of $V$ such that $|V'|=q$ and, for all $v \in V'$ and $u \in V''$,  either $\nu_{\{m,c\}}(M_{v}) < \nu_{\{m,c\}}(M_{u})$ or $\nu_{\{m,c\}}(M_{v}) = \nu_{\{m,c\}}(M_{u})$ and $\nu_{m}(M_{v}) \leq \nu_{m}(M_{u})$. Let $M'=\biguplus_{v \in V'}M_{v}$, $M''=\biguplus_{u \in V''}M_{u}$ and, for a multiset $S$ of positive integers, abbreviate $(m-q)\nu_m(S)+(c-q)\nu_c(S)$ to $\xi(S)$. By Lemma~\ref{Lemma:partitionV}(a) we have
\begin{equation}\label{Equation:LambdaEvenReductionL6b}
\xi(M'') \leq \sum_{x \in M''}(x-q) \leq \l\mbinom{n-q}{2}=(m-q)r+(c-q)((\ell+1)(n-q)-2r),
\end{equation}
where the equality follows by the definition of $c$.

We will show that $\xi(M_v)=\ell(c-q)$ for each $v \in V'$. Suppose otherwise. Because $\nu_m(M')=r-\nu_m(M'')$ and $\nu_c(M')=(\ell+1)n-2r-q-\nu_c(M'')$, \eqref{Equation:LambdaEvenReductionL6b} implies that $\xi(M') \geq \ell q(c-q)$. So $\xi(M_{v_0}) > \ell(c-q)$ for some $v_0 \in V'$ and hence either $\nu_{\{m,c\}}(M_{v_0}) \geq \ell+1$ or $\nu_{\{m,c\}}(M_{v_0}) = \ell$ and $\nu_{m}(M_{v_0}) \geq 1$. Then, by the definition of $\{V',V''\}$, for each $u \in V''$ either $\nu_{\{m,c\}}(M_u) \geq \ell+1$ or $\nu_{\{m,c\}}(M_u) = \ell$ and $\nu_{m}(M_u) \geq 1$. Furthermore, the latter applies for strictly fewer than $r$ vertices $u \in V''$, for otherwise we would necessarily have $\{m^{[r]},c^{[(\ell+1)(n-q)-2r]}\} \subseteq M''$ and hence $\nu_m(M')=0$ and $\nu_c(M') \leq \ell q$, contradicting $\xi(M') > \ell q(c-q)$. Thus, $\xi(M_u) \geq \xi(\{m,c^{[\ell-1]}\})$ for each $u \in V''$ and $\xi(M_u) \geq \xi(\{c^{[\ell+1]}\})$ for strictly more than $n-r-q$ vertices $u \in V''$ (note that our hypothesis $2c>m+q$ implies $\xi(\{m,c^{[\ell-1]}\}) < \xi(\{c^{[\ell+1]}\})$). Thus,
\[\xi(M'') > r\xi(\{m,c^{[\ell-1]}\})+(n-r-q)\xi(\{c^{[\ell+1]}\}) = \l\mbinom{n-q}{2},\]
where the equality is obtained by applying the definitions of $\xi$ and $c$. Hence we have a contradiction to \eqref{Equation:LambdaEvenReductionL6b} and it is indeed the case that $\xi(M_v)=\ell(c-q)$ for each $v \in V'$. Further, for each $v \in V'$, we have $\nu_m(M_v)+\nu_c(M_v) \geq \ell$ for otherwise $\nu_m(M_v)+\nu_c(M_v) \leq \ell-1$ and $\xi(M_v) \leq (\ell-1)(m-q) < \ell(c-q)$ where the second inequality is one of our hypotheses. Thus, because $m>c$, for each $v \in V'$ we in fact have $(\nu_m(M_v),\nu_c(M_v))=(0,\ell)$.

From this it follows that $(\nu_m(M''),\nu_c(M''))=(r,(\ell+1)(n-q)-2r)$. Thus, because each element of $M$ is greater than $q$, equality holds throughout \eqref{Equation:LambdaEvenReductionL6b} and $M''=\{m^{[r]},c^{[(\ell+1)(n-q)-2r]}\}$. So, by Lemma~\ref{Lemma:partitionV}(b), for each $v \in V'$,
\begin{equation}\label{Equation:LambdaEvenReductionOneSmall2}
\sigma(M_{v}) \leq \lambda(n-1)-|M''|=\ell c + b + (\ell+1)qa + \ell(q-1),
\end{equation}
where the equality follows using $|M''|=(\ell+1)(n-q)-r$ and the definition of $b$.

Because $M''=\{m^{[r]},c^{[(\ell+1)(n-q)-2r]}\}$, $M'=\{c^{[\ell q]},b^{[q]},(\ell+1)qa_1,\ldots,(\ell+1)qa_{3q}\}$. We have already seen that $(\nu_m(M_v),\nu_c(M_v))=(0,\ell)$ for each $v \in V'$. So, using \eqref{Equation:LambdaEvenReductionOneSmall2} and the fact that $b > (\ell+1)q a  + \ell(q-1)$, we can conclude that $\nu_b(M_v)=1$ for each $v \in V'$. It follows that $M_v=\{c^{[\ell]},b\} \uplus A_v$ for each $v \in V'$ where $\{A_v:v \in V'\}$ is a partition of $\{(\ell+1)qa_1,\ldots,(\ell+1)qa_{3q}\}$ into $q$ classes. So, because $\sigma(A_v) \equiv 0 \mod{(\ell+1)q}$ for each $v \in V'$ and $(\ell+1)q>\ell(q-1)$, it follows from \eqref{Equation:LambdaEvenReductionOneSmall2} that $\sigma(A_v) = (\ell+1)qa$ for each $v \in V'$. The existence of $\{A_v:v \in V'\}$ implies there is a partition of $\{a_1,\ldots,a_{3q}\}$ into $q$ classes such that the elements of each class sum to $a$.
\end{proof}

\begin{lemma}\label{Lemma:LambdaEvenNPComplete}
Let $\l$ be a positive even integer. For each $\alpha>1-\tfrac{2}{\l}(3-2\sqrt{2})$, \textsc{$(\lambda,\alpha)$-star decomp} is $\mathsf{NP}$-complete.
\end{lemma}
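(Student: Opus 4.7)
First, \textsc{$(\lambda,\alpha)$-star decomp} belongs to $\mathsf{NP}$ because a proposed decomposition can be verified in polynomial time. For $\mathsf{NP}$-hardness, I would reduce from \textsc{3-partition} using Lemma~\ref{Lemma:LambdaEvenReduction}. Given an instance $\{a_1,\ldots,a_{3q}\}$ of \textsc{3-partition} with common sum $a$, the task reduces to exhibiting integers $n$, $m$, $r$ satisfying all the hypotheses of that lemma, with $m \leq \alpha(n-1)$ and $n$ polynomial in $qa$. Since \textsc{3-partition} is $\mathsf{NP}$-complete in the strong sense, this will yield a polynomial-time reduction.

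The technical heart of the argument is an asymptotic analysis in $n$. Setting $\beta = m/n$, $\rho = r/n$, and $\gamma = c/n$, the defining formula for $c$ gives $\gamma = \frac{\ell - \rho\beta}{\ell + 1 - 2\rho}$ to leading order, and the hypotheses of Lemma~\ref{Lemma:LambdaEvenReduction} translate into asymptotic constraints on $(\beta, \rho)$. The most delicate is the lower bound $b > (\ell+1)qa + \ell(q-1)$, which in the limit simplifies to $\ell\gamma < \ell - 1 + \rho$. Substituting the expression for $\gamma$ and clearing denominators yields the quadratic inequality
\[
2\rho^2 - (\ell\beta - \ell + 3)\rho + 1 < 0.
\]
Its discriminant $(\ell\beta - \ell + 3)^2 - 8$ is positive precisely when $\beta > 1 - \frac{3-2\sqrt{2}}{\ell} = \alpha'$, and at the threshold it has a unique double root at $\rho = \sqrt{2}/2$. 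This is the origin of the stated value of $\alpha'$. For any $\alpha > \alpha'$, fixing $\beta$ with $\alpha' < \beta < \alpha$ opens up an interval of admissible $\rho$ clustered near $\sqrt{2}/2$.

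The remaining three asymptotic conditions ($2c > m + q$; $\ell(c-q) > (\ell-1)(m-q)$; and $m > c + b + (\ell+1)qa$) are weaker in this regime and can be verified by direct substitution to hold on a nonempty open neighbourhood of $(\beta, \sqrt{2}/2)$. Having fixed suitable values of $\beta$ and $\rho$, I would then choose $n$ in a suitable arithmetic progression and set $m$ and $r$ close to $\beta n$ and $\rho n$ so that the formulas for $c$ and $b$ yield integer values. Since every constraint is a strict open condition and every integrality correction shifts the parameters by $O(1)$, such adjustments preserve all hypotheses for $n$ sufficiently large, with $n$ polynomial in $qa$. The principal obstacle is the integrality bookkeeping---arranging the residue classes of $n$, $m$, and $r$ so that $(\ell+1)(n-q) - 2r$ divides the numerator of the defining fraction for $c$---but this can be handled by restricting $n$ to a single congruence class modulo an integer depending only on $\ell$, $\beta$, $\rho$, and $q$.
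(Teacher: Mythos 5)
Your reduction from \textsc{3-partition} via Lemma~\ref{Lemma:LambdaEvenReduction} and your asymptotic analysis are both correct and in fact identical in spirit to what the paper does. In particular, the identification of $b > (\ell+1)qa + \ell(q-1)$ as the binding constraint, the reduction to $\ell\gamma < \ell - 1 + \rho$, and the quadratic $2\rho^2 - (\ell\beta - \ell + 3)\rho + 1 < 0$ whose discriminant vanishes precisely at $\beta = \alpha'$ with double root $\rho = \tfrac{\sqrt{2}}{2}$ is exactly the right calculation, and it cleanly explains where $\alpha'$ comes from.

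However, there is a genuine gap in the integrality step, which is in fact the technical heart of the paper's proof. You claim that ``every integrality correction shifts the parameters by $O(1)$'' and that the divisibility requirement for $c$ ``can be handled by restricting $n$ to a single congruence class modulo an integer depending only on $\ell$, $\beta$, $\rho$, and $q$.'' Neither claim is justified, and the second is not true as stated: the divisibility condition is $(\ell+1)(n-q) - 2r \mid \ell(n-q)(n-q-1) - r(m-q)$, which, viewed as a congruence condition on $m$ for given $n$ and $r$, has a modulus that grows like $\Theta(n)$. The set of admissible $m$ (for a fixed $n,r$) is a residue class modulo $\bigl((\ell+1)(n-q)-2r\bigr)/\gcd\!\bigl(r,(\ell+1)(n-q)-2r\bigr)$, and unless you control that $\gcd$, the solutions may be spaced $\Theta(n)$ apart, so you cannot guarantee a solution with $m - \alpha'(n-1) = o(n)$ and simultaneously $m - \alpha'(n-1) > 2\sqrt{2}(\ell+1)q(a+2)$. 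This is why the paper's proof selects $n$ so that $n-q$ has a divisor $x$ of size roughly $\sqrt{n}$, takes $r = px$ with $p$ prime close to $\frac{1}{x\sqrt{2}}(n-1)$ (invoking prime-gap results to bound $p$), and only then solves a Diophantine equation for $m$: the primality of $p$ and the common factor $x$ together force the relevant $\gcd$ to divide the right-hand side, and the reduced modulus is $o(n)$, so a valid $m$ exists in the required range. Your proposal does not reproduce this argument, nor does it supply an alternative, so the reduction is not yet complete.
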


\begin{proof}
Let $\{a_1,\ldots,a_{3q}\}$ be an instance of \textsc{3-partition}. We will use Lemma~\ref{Lemma:LambdaEvenReduction} to reduce this instance to an instance of \textsc{$(\lambda,\alpha)$-star decomp}. Let $a=\frac{1}{q}(a_1+\cdots+a_{3q})$, $\alpha'=1-\tfrac{2}{\l}(3-2\sqrt{2})$, and $\ell=\frac{\l}{2}$.  It suffices to find integers $n$, $m$ and $r$ that satisfy the conditions of Lemma~\ref{Lemma:LambdaEvenReduction} and such that $m \leq \alpha(n-1)$ and $n$ is polynomial in $qa$.

We will first select integers $n$, $m$ and $r$ such that $n \gg (\ell+1)qa$, $n$ is polynomial in $qa$, $c$ (as defined as in Lemma~\ref{Lemma:LambdaEvenReduction}) is an integer, and
\begin{align*}
  r &= \tfrac{1}{\sqrt{2}}(n-1)+r'\\[1mm]
  m &= \alpha'(n-1) + m'
\end{align*}
for some $r'$ and $m'$ such that $r'= o(n)$, $m' = o(n)$, $m' \geq 2r' \geq 0$ and $m'>2\sqrt{2}(\ell+1)q (a+2)$. We proceed as follows.

\begin{description}
    \item[$\bm{n}$ and $\bm{x}$:]
Select $n \gg (\ell+1)qa$ so that $n$ is polynomial in $qa$ and $n-q$ has a divisor $x$ such that $x=o(n)$ and $\frac{n-q}{x}=o(n)$.
    \item[$\bm{p}$ and $\bm{r}$:]
Now let $p$ be the smallest prime such that $p \geq \tfrac{1}{x\sqrt{2}}(n-1)$ and note that $p \gg \ell$ because $x=o(n)$. Results on prime gaps imply that $p=\tfrac{1}{x\sqrt{2}}(n-1)+o(\frac{n}{x})$ (see \cite{BakHarPin}, for example). Choose $r=px$ and note this means $r'=o(n)$.
    \item[$\bm{m}$ (and hence $\bm{m'}$):]
We will now select an integer $m$ (and hence also a real $m'$) such that $m'>2r'$, $m'>2\sqrt{2}(\ell+1)q (a+2)$, $m'=o(n)$ and $z$ divides the integer $\frac{\ell(n-q)(n-q-1)}{x}-\frac{r(m-q)}{x}$ where $z$ is the integer $\frac{(\ell+1)(n-q)-2r}{x}$. Note this last fact will ensure that $(\ell+1)(n-q)-2r$ divides $\ell(n-q)(n-q-1)-r(m-q)$ and hence that $c$ (as defined as in Lemma~\ref{Lemma:LambdaEvenReduction}) is an integer. Noting $p=\frac{r}{x}$, we will be able to select such an $m$ provided that $\gcd(z,p)$ divides $\frac{\ell(n-q)(n-q-1)}{x}$ because then the diophantine equation $\beta z + \gamma p=\frac{\ell(n-q)(n-q-1)}{x}$ will have solutions for $\beta$ and $\gamma$ and we will be able to set $m=\gamma+q$ for an appropriately chosen solution $\gamma$. Observe that $\gcd(z,p) \in \{1,p\}$ since $p$ is prime. To see that $\gcd(z,p)$ divides $\frac{\ell(n-q)(n-q-1)}{x}$, note that $\gcd(z,p)$ must divide $\frac{(\ell+1)(n-q)}{x}$ since $\frac{(\ell+1)(n-q)}{x}=z+2p$. But then $\gcd(z,p)$ must in fact divide $\frac{n-q}{x}$, because $\gcd(\ell+1,p)=1$ since $p\gg\ell+1$ is prime.
    \item[$\bm{c}$ and $\bm{b}$:]
Choose $c$ and then $b$ according to their definitions in Lemma~\ref{Lemma:LambdaEvenReduction}.
\end{description}
So we can indeed select integers with the properties we claimed. Note that $m \leq \alpha(n-1)$ because $m=\alpha'(n-1)+o(n)$ and $\alpha>\alpha'$.

Now, $r=\tfrac{1}{\sqrt{2}}(n-1)+o(n)$, $m =\alpha'(n-1) + o(n)$ and it can be calculated that
\[c=\left(1-\mfrac{2-\sqrt{2}}{2\ell}\right)(n-1)+o(n).\]
From this, it is routine to check that $2c>m+q$, $\ell(c-q)>(\ell-1)(m-q)$ and $m>c+b+(\ell+1)qa$.

Finally, using the definitions of $m$ and $r$, we have $b>(\ell+1)qa+\ell(q-1)$ because
\begin{align*}
b-(\ell+1)qa-\ell(q-1)&=(\ell-1)(n-c-1)+r+q-1-c-2(\ell+1)qa-\ell(q-1)\\
	&= \mfrac{\ell\sqrt{2}m'+q(4-3\sqrt{2})}{2(\ell+1-\sqrt{2})}-\tfrac{2-\sqrt{2}}{2}-2(\ell+1)qa-\ell(q-1) \\
&\qquad+\mfrac{2r'((\ell+1)(\ell m'-2r')+2\sqrt{2}r')}{2(\ell+1-\sqrt{2})^2n}+o(1)
\end{align*}
and this can be seen to be positive using $m' \geq 2r' \geq 0$ and $m'>2\sqrt{2}(\ell+1)q (a+2)$.
\end{proof}

In this $\lambda$ even case, the greedy assignment method that we used for $\lambda$ odd will not suffice for establishing that every instance of \textsc{$(\lambda,\alpha)$-star decomp} is feasible when $\alpha \leq 1-\tfrac{2}{\l}(3-2\sqrt{2})$. Instead, we now introduce an alternative assignment method and, in Lemma~\ref{Lemma:GoodAllocationProperties}, establish some of its properties.

Let $m_1,\ldots,m_t$ be positive integers and $V$ be a set of $n$ vertices. An \emph{equitable assignment of $m_1,\ldots,m_t$ to multisets $\{M_v:v\in V\}$} is one for which
\begin{itemize}
    \item[(E1)]
for each $v \in V$, $|M_v| \in \{\lfloor\frac{t}{n}\rfloor,\lceil\frac{t}{n}\rceil\}$; and
    \item[(E2)]
for any $u,v \in V$ such that $\sigma(M_u) < \sigma(M_v)$ and any elements $x \in M_u$ and $y \in M_v$ such that $x < y$,  $\sigma(M_v) - \sigma(M_u) \leq y-x$.
\end{itemize}

\begin{lemma}\label{Lemma:GoodAllocationProperties}
Let $\l$ be a fixed even integer. Let $n$, $m$ and $m_1,\ldots,m_t$ be positive integers such that $n > m \geq m_1 \geq \cdots \geq m_t$, $m_{t-1}+m_t > m > \frac{n}{2}$, and $m_1+\cdots+m_t=\l\binom{n}{2}$. Let $V$ be an index set of cardinality $n$. An equitable assignment $\{M_v:v \in V\}$ of $m_1,\ldots,m_t$ can be found in polynomial time in $n$ and for any such assignment the following hold.
\begin{itemize}
    \item[\textup{(E3)}]
For any $u,v \in V$ and any element $x \in M_u$, $\sigma(M_v) \leq \max(\lceil\frac{t}{n}\rceil x,\sigma(M_u)+m-x)$.
    \item[\textup{(E4)}]
For any $u,v \in V$, $\sigma(M_v) \leq \sigma(M_u)+m$.
    \item[\textup{(E5)}]
For any $u,v \in V$ such that $|M_v| \geq 2$, then  $\sigma(M_v) \leq \frac{|M_v|}{|M_v|-1}\sigma(M_u)$.
\end{itemize}
\end{lemma}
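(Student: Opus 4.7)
To obtain an equitable assignment in polynomial time, start with any assignment satisfying (E1)---for instance, the round-robin assignment that sends $m_i$ to the multiset indexed by $((i-1) \bmod n) + 1$---and iteratively repair violations of (E2). While some $u,v \in V$, $x \in M_u$ and $y \in M_v$ satisfy $\sigma(M_u)<\sigma(M_v)$, $x<y$ and $\sigma(M_v)-\sigma(M_u)>y-x$, swap $x$ with $y$, sending $M_u \to (M_u \setminus \{x\}) \uplus \{y\}$ and $M_v \to (M_v \setminus \{y\}) \uplus \{x\}$. This preserves (E1), and a direct expansion of squares shows that the potential $\sum_{w \in V}\sigma(M_w)^2$ strictly decreases by $2(y-x)(\sigma(M_v)-\sigma(M_u)-(y-x)) \geq 2$ at each swap. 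Since the potential is a nonnegative integer bounded above polynomially in $n$ (using $\sigma(M_w) \leq \lambda\binom{n}{2}$), the procedure terminates in polynomially many iterations, each of which can be executed in polynomial time.

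Now fix any equitable assignment. Properties (E3) and (E4) follow from short case analyses using (E1) and (E2). For (E3), fix $u,v$ and $x \in M_u$. If $\sigma(M_v) \leq \sigma(M_u)$ then $\sigma(M_v) \leq \sigma(M_u) + m - x$ since $x \leq m$; otherwise, either some $y \in M_v$ exceeds $x$ and (E2) gives $\sigma(M_v) - \sigma(M_u) \leq y - x \leq m - x$, or every $y \in M_v$ satisfies $y \leq x$ and $\sigma(M_v) \leq |M_v|\, x \leq \lceil t/n \rceil x$ by (E1). For (E4), apply the same dichotomy with $x=\min(M_u)$: if some element of $M_v$ exceeds $\min(M_u)$ then (E2) yields $\sigma(M_v) - \sigma(M_u) \leq m-1 < m$; otherwise $\max(M_v) \leq \min(M_u)$, which combined with $\sigma(M_v) > \sigma(M_u)$ forces $|M_v|=|M_u|+1$, giving $\sigma(M_v) \leq (|M_u|+1)\min(M_u) \leq \sigma(M_u)+m$.

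Property (E5) is the main obstacle. Write $k=|M_v| \geq 2$, $\mu_0 = \min(M_u)$ and $\Delta = \sigma(M_v) - \sigma(M_u)$, and assume $\Delta > 0$. Three sub-cases arise. If every $y \in M_v$ exceeds $\mu_0$, summing the $k$ inequalities $\Delta \leq y_i - \mu_0$ supplied by (E2) gives $k\Delta \leq \sigma(M_v) - k\mu_0$, whence $(k-1)\sigma(M_v) \leq k\sigma(M_u) - k\mu_0 \leq k\sigma(M_u)$. If every $y \in M_v$ is at most $\mu_0$, then $\sigma(M_v) \leq k\mu_0$ while (E1) gives $\sigma(M_u) \geq |M_u|\mu_0 \geq (k-1)\mu_0$, again yielding $(k-1)\sigma(M_v) \leq k\sigma(M_u)$. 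In the remaining mixed case, with $\min(M_v) \leq \mu_0$ and some element of $M_v$ exceeding $\mu_0$, the hypothesis $m_{t-1}+m_t > m$ becomes decisive: since $\min(M_v)$ and $\mu_0$ are distinct entries of the parent multiset, $\min(M_v)+\mu_0 > m$, and combined with $\min(M_v) \leq \mu_0$ this forces $\mu_0 > m/2$. Then $\sigma(M_u) \geq (k-1)\mu_0 > (k-1)m/2$, while (E2) applied to any $y \in M_v$ with $y > \mu_0$ gives $\Delta \leq y - \mu_0 \leq m - \mu_0 < m/2$. Hence $(k-1)\Delta < \sigma(M_u)$, so $(k-1)\sigma(M_v) = (k-1)\sigma(M_u)+(k-1)\Delta < k\sigma(M_u)$. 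The key step is recognising that the hypothesis $m_{t-1}+m_t > m$ translates into the bound $\mu_0 > m/2$ exactly when the mixed case is active, which provides just enough slack to close (E5).
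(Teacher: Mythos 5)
Your proof is correct and takes essentially the same approach as the paper: the potential-decreasing swap argument for existence and polynomial time is the paper's argument (the paper minimises the variance of the $\sigma(M_v)$, which differs from your $\sum_v\sigma(M_v)^2$ only by an additive constant), and (E3) is argued identically. The remaining differences are cosmetic --- you prove (E4) directly with a small case analysis whereas the paper deduces it as a corollary of (E3), and for (E5) you split on whether all, none, or some elements of $M_v$ exceed $\min(M_u)$ (with the pleasant trick of summing (E2) over all of $M_v$ in the first case) rather than the paper's split on $\min(M_u)$ versus $m/2$ and $\max(M_v)$ --- but both rely on the same ingredients: (E1), (E2), and the observation that $m_{t-1}+m_t>m$ forces any two distinct parent entries to sum to more than $m$.
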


\begin{proof}
We first show that an equitable assignment exists and can be found in polynomial time. Begin with any assignment $\{M_v:v \in V\}$ satisfying (E1). Clearly the variance $z=\sum_{v \in V}(\sigma(M_v)-\frac{\l}{2}(n-1))^2$ of the sums of the multisets is an integer which is polynomial in $n$. Because $m_{t-1}+m_t > \frac{n}{2}$ implies that $t$ is linear in $n$, it can be checked in quadratic time in $n$ whether the assignment satisfies (E2). If it does not, there are $u,v \in V$ such that $\sigma(M_u) < \sigma(M_v)$ and elements $x \in M_u$ and $y \in M_v$ such that $x < y$ and $\sigma(M_v) - \sigma(M_u) > y-x$, and we can replace the initial assignment with the assignment $\{M'_v:v \in V\}$ obtained from $\{M_v:v \in V\}$ by exchanging an element of $M_u$ equal to $x$ with an element of $M_v$ equal to $y$. The new assignment satisfies (E1) and the variance of the sums of the multisets of this new assignment is an integer strictly smaller than $z$ because
\[|\sigma(M'_v)-\sigma(M'_u)|=|\sigma(M_v)-\sigma(M_u)-2(y-x)|<|\sigma(M_v)-\sigma(M_u)|.\]
Thus by iterating this process we will obtain an equitable assignment in polynomial time in $n$.

We now show that (E3), (E4) and (E5) hold for any equitable assignment $\{M_v:v \in V\}$ of $m_1,\ldots,m_t$.
\begin{itemize}
    \item[\textup{(E3)}]
We may assume that $\sigma(M_u) < \sigma(M_v)$ for otherwise it is clear that $\sigma(M_v) \leq \sigma(M_u)+m-x$. If $\max(M_v) \leq x$, then
$\sigma(M_v) \leq |M_v|x \leq \lceil\frac{t}{n}\rceil x$.
Otherwise, $\max(M_v) \geq x$ and, by (E2),
\[\sigma(M_v) \leq \sigma(M_u)+\max(M_v)-x \leq \sigma(M_u)+m-x.\]
    \item[\textup{(E4)}]
Let $x=\min(M_u)$ and note that $x \leq \frac{1}{|M_u|}\sigma(M_u)$. By (E3), $\sigma(M_v) \leq \max(\lceil\frac{t}{n}\rceil x,\sigma(M_u)+m-x)$. We have $\lceil\frac{t}{n}\rceil x \leq (|M_u|+1)x \leq \sigma(M_u)+m$ and, clearly, $\sigma(M_u)+m-x<\sigma(M_u)+m$.
    \item[\textup{(E5)}]
We may assume that $\sigma(M_u) < \sigma(M_v)$ for otherwise (E5) follows immediately. Let $x=\min(M_u)$ and note that $x \leq \frac{1}{|M_u|}\sigma(M_u)$. If $x \geq \max(M_v)$, then
\[\sigma(M_v) \leq |M_v|x \leq \tfrac{|M_v|}{|M_u|}\sigma(M_u) \leq \tfrac{|M_v|}{|M_v|-1}\sigma(M_u).\]
If $\frac{m}{2} < x < \max(M_v)$, then $\max(M_v)-x < x$ and so by (E2),
\[\sigma(M_v) \leq \sigma(M_u) + \max(M_v)-x < \sigma(M_u) + x \leq \tfrac{|M_u|+1}{|M_u|}\sigma(M_u) \leq \tfrac{|M_v|}{|M_v|-1}\sigma(M_u).\]
If $x \leq \frac{m}{2}$, then $x < \min(M_v)$ because $x + \min(M_v) > m$. So by (E2)
\[\sigma(M_v) \leq \sigma(M_u) + \min(M_v) - x < \sigma(M_u) + \tfrac{1}{|M_v|}\sigma(M_v)\]
and, rearranging, we have $\sigma(M_v) < \frac{|M_v|}{|M_v|-1}\sigma(M_u)$. \qedhere
\end{itemize}
\end{proof}

As an example of when greedy assignment would be insufficient, consider attempting to decompose $4K_{100}$ into stars of sizes $\{90^{[166]},73^{[36]},72^{[31]}\}$. Greedy assignment would result in multisets $\{M_v:v \in V\}$ such that $M_u=\{90,73\}$ for some $u \in V$ and either $M_v=\{90^{[2]}\}$ or $|M_v| \geq 3$ for each $v \in V \setminus \{u\}$. The restriction function that takes the value $0$ at $u$ and the value $|M_v|$ at each $v \in V \setminus \{u\}$ shows that a decomposition with this assignment does not exist. However, because $90 \leq \alpha'(n-1)$, we will see in Lemma~\ref{Lemma:LambdaEvenExistence} below that a decomposition of $4K_{100}$ into stars of the prescribed sizes does indeed exist when the star sizes are assigned equitably.

We are now ready to show that, when $\alpha\leq 1-\tfrac{2}{\l}(3-2\sqrt{2})$, every instance of \textsc{$(\lambda,\alpha)$-star decomp} is feasible. We show this by first equitably assigning (as in Lemma~\ref{Lemma:GoodAllocationProperties}) the specified star sizes to the vertices of $\l K_n$. We then ``compress'' the resulting list at each vertex so as to reduce the number of possible restriction functions we need to consider. Finally we use Theorem~\ref{Theorem:RealTruthLambda} and Lemma~\ref{Lemma:MinimalFProperties1LambdaEven} to show the existence of a decomposition into the compressed sizes, and hence also of the desired decomposition. Again, we are forced to consider a number of cases.

\begin{lemma}\label{Lemma:LambdaEvenExistence}
Let $\l$ be a positive even integer and let $\alpha'=1 - \frac{2}{\lambda}(3-2\sqrt{2})$. For each positive integer $n$, if $M$ is a multiset of integers such that $\sigma(M)=\l\binom{n}{2}$ and \mbox{$\max(M) \leq  \alpha'(n-1)$}, then a decomposition of $\lambda K_n$ into stars of sizes given by the elements of $M$ exists and can be found in polynomial time in $n$.
\end{lemma}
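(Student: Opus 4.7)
My plan is to follow the high-level strategy outlined in Section~\ref{Section:Strategy} and mirror the argument of Lemma~\ref{Lemma:LambdaOddExistence}, with the greedy assignment replaced by the equitable assignment supplied by Lemma~\ref{Lemma:GoodAllocationProperties} and with Lemma~\ref{Lemma:MinimalFProperties} replaced by Lemma~\ref{Lemma:MinimalFProperties1LambdaEven}. First I would dispose of bounded $n$ directly, set $m=\lfloor\alpha'(n-1)\rfloor$ and $\ell=\lambda/2$, and use Lemma~\ref{Lemma:bloodyObvious} to assume that $x+y>m$ for any two (possibly equal) distinct $x,y\in M$. Letting $V$ be an $n$-set, I would construct in polynomial time an equitable assignment $\{M_v:v\in V\}$ of $M$ and then verify that $\sigma(M_v)>(\ell-1)m$ for each $v\in V$ by combining (E4) with $\sigma(M)=\lambda\binom{n}{2}$ and $m\leq\alpha'(n-1)$; without this lower bound the compression step below is not well defined.

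I would then split into two principal cases according to whether some vertex has $\sigma(M_v)>(\ell+1)m$. In the large case, (E4) forces $\sigma(M_v)\in(km,(k+2)m]$ for a common $k\geq\ell$, and I would compress $M_v$ to $M^*_v=\{m^{[k]},\sigma(M_v)-km\}$ or $\{m^{[k+1]},\sigma(M_v)-(k+1)m\}$ according to which sub-interval applies, so that $\{m^{[k]}\}\subseteq M^*_v$. Lemma~\ref{Lemma:MinimalFProperties1LambdaEven}(b) then supplies a minimal restriction function $f$ with $f(v)=0$ or $f(v)\geq\ell+1$ for each $v$, and a short computation using Lemma~\ref{Lemma:dualThing} and $m\leq\alpha'(n-1)$ should force $\Delta_f(\mathcal{K})\geq 0$, parallel to Case~1 of Lemma~\ref{Lemma:LambdaOddExistence}.

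In the small case $\sigma(M_v)\leq(\ell+1)m$ for every $v$, and I would compress $M_v$ into $M^*_v=\{m^{[\ell-1]},y_v,\sigma(M_v)-(\ell-1)m-y_v\}$ (collapsing to fewer elements when a summand vanishes), choosing $y_v$ as the smallest integer with $\sigma_{\ell}(M^*_v)\geq\sigma_{\ell}(M_v)$ while also guaranteeing $y_v\geq\lfloor\tfrac{1}{2}(\sigma(M^*_v)-\sigma_{\ell-1}(M^*_v))\rfloor$. This choice is precisely what triggers both clauses of Lemma~\ref{Lemma:MinimalFProperties1LambdaEven}(a), yielding a minimal $f$ taking values in $\{0,\ell,\ell+1\}$. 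I would then subdivide by the size of $n_{\ell+1}=|f^{-1}(\ell+1)|$ relative to the threshold $\ell(n-m-1)$ and, when both $V_0$ and $V_{\ell+1}$ are nonempty, further by whether $|M_w|=\ell$ or $|M_w|=\ell+1$ at a vertex $w\in V_0$ minimising $\sigma$, bounding $\Delta^-_f(\mathcal{K})$ from above via (E3)--(E5) and plugging into Lemma~\ref{Lemma:dualThing}.

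The main obstacle, and where I expect the constant $3-2\sqrt{2}$ to arise, is the final subcase in which all three of $V_0$, $V_\ell$, $V_{\ell+1}$ are nonempty. There the inequality produced by Lemma~\ref{Lemma:dualThing} is quadratic in $n_0$ and $n_{\ell+1}$ after using (E5) to bound $\sigma_\ell(M_v)$ for $v\in V_\ell$ and (E3) to bound $\sigma(M_v)$ for $v\in V_{\ell+1}$ in terms of $\sigma(M_w)$ and $m$; optimising the resulting rational expression over the admissible region yields a threshold whose critical point involves $\sqrt{2}$, matching the parameter $r=\tfrac{1}{\sqrt{2}}(n-1)+o(n)$ that appeared in Lemma~\ref{Lemma:LambdaEvenReduction}. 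The ``equitable'' refinement of greedy is essential precisely here, as the example following Lemma~\ref{Lemma:GoodAllocationProperties} illustrates. Once $\Delta(\mathcal{K})\geq 0$ has been verified in every case, Theorem~\ref{Theorem:RealTruthLambda} gives a star $\mathcal{K}$-decomposition in polynomial time and Lemma~\ref{Lemma:multistarDecomp} converts it back into the required decomposition of $\lambda K_n$ into stars of sizes given by $M$.
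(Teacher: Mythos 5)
Your proposal follows essentially the same route as the paper's proof: equitable assignment via Lemma~\ref{Lemma:GoodAllocationProperties}, the lower bound $\sigma(M_v)>(\ell-1)m$ from (E4), compression to $\{m^{[k]},\ldots\}$ or $\{m^{[\ell-1]},y_v,\ldots\}$, invocation of Lemma~\ref{Lemma:MinimalFProperties1LambdaEven}, the case split on $\sigma(M_v)>(\ell+1)m$, and an optimisation via Lemma~\ref{Lemma:dualThing} whose critical point produces the constant $3-2\sqrt{2}$. The only slight imprecision is in your sub-case structure for Case~2 (the paper splits on $\lfloor t/n\rfloor\geq\ell+1$, then on whether $\sigma(M_v)\leq\sigma(M_w)+\tfrac{m}{2}$ for all $v\in V_{\ell+1}$, and finally on $(\ell+2)n_{\ell+1}+n_0$ versus $n$, rather than directly on $|M_w|$) and in your floor where the paper uses a ceiling in the definition of $y_v$, but these are cosmetic and the underlying argument is the same.
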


\begin{proof}
The result is obvious if $n \leq 4$, so we may suppose that $n \geq 5$. Let  $m=\lfloor\alpha'(n-1)\rfloor$ and let $t=|M|$. By Lemma~\ref{Lemma:bloodyObvious} we may assume that $x+y>m$ for any distinct (but possibly equal) $x,y \in M$. Let $\ell=\frac{\l}{2}$ and let $\beta=3-2\sqrt{2} \approx 0.172$ so that $\alpha'=1-\frac{\beta}{\ell}$. Let $V$ be a set of $n$ vertices. By Lemma~\ref{Lemma:GoodAllocationProperties}, an equitable assignment $\{M_v:v \in V\}$ of the elements of $M$ to multisets can be produced in polynomial time.
Because $\sigma(M)=\ell n(n-1)>\ell mn$, there must be a vertex $u \in V$ such that $\sigma(M_u)>\ell m$ and hence, by (E4), $\sigma(M_v) > (\ell-1) m$ for each $v \in V$.

In each of two cases below we will define, for each $v \in V$, a ``compressed'' multiset $M^*_v$ of integers from $\{1,\ldots,m\}$ such that $\sigma(M^*_v)=\sigma(M_v)$ and $\sigma_i(M^*_v) \geq \sigma_i(M_v)$ for each $i \in \{1,\ldots,|M^*_v|\}$. As discussed in Section~\ref{Section:Strategy}, by Lemma~\ref{Lemma:multistarDecomp} it will suffice to find a star $\mathcal{K}$-decomposition where $\mathcal{K}$ is the multigraph $\l K_V$ equipped with the multisets $\{M^*_v:v \in V\}$. For each case we will define $f$ to be a minimal restriction function for $\mathcal{K}$ given by Lemma~\ref{Lemma:MinimalFProperties1LambdaEven} and, for each nonnegative integer $i$, let $V_i=f^{-1}(i)$ and $n_i=|V_i|$. By Theorem~\ref{Theorem:RealTruthLambda}, it will suffice to show $\Delta_f(\mathcal{K}) \geq 0$ (note that Theorem~\ref{Theorem:RealTruthLambda} guarantees a polynomial time construction).

Note that because $\sigma(M^*_v) = \sigma(M_v)$ for each $v \in V$, we will usually use $\sigma(M_v)$ in preference to $\sigma(M^*_v)$ for the sake of clean notation. The quantity $\ell(n-m-1)$ will be important throughout this proof and it will be useful to note that $\beta(n-1) \leq \ell(n-m-1) < \beta(n-1)+\ell$ because $m=\lfloor \alpha'(n-1) \rfloor$.

\textbf{Case 1.} Suppose that $\sigma(M_{v}) > (\ell+1)m$ for some $v\in V$. By (E4), we must have $km < \sigma(M_v) \leq (k+2)m$ for all $v\in V$ for some positive integer $k \geq \ell$. For each $v \in V$, let
\[M^*_v=\left\{
  \begin{array}{ll}
    \{m^{[k]},\sigma(M_v)-k m\} & \hbox{if $k m < \sigma(M_v) \leq (k+1) m$;} \\
    \{m^{[k+1]},\sigma(M_v)-(k+1) m\} & \hbox{if $(k+1) m < \sigma(M_v) \leq (k+2) m$.}
  \end{array}
\right.\]
Note $\{m^{[k]}\} \subseteq M^*_v$ for each $v \in V$. So, by Lemma~\ref{Lemma:MinimalFProperties1LambdaEven}(b), we can take $f$ to be a minimal restriction function such that $f(v)=0$ or $f(v) \geq \ell+1$ for each $v \in V$. Suppose for a contradiction that $\Delta_f(\mathcal{K})<0$. Note that $n_0 > 0$ for otherwise $\Delta^+_f(\mathcal{K})=\l\binom{n}{2}$, contradicting $\Delta_f(\mathcal{K})<0$. By Lemma~\ref{Lemma:dualThing},
\begin{equation}\label{Equation:LambdaEvenCase1}
\sum_{v \in V_0}\sigma(M_v) \leq \l\mbinom{n}{2}-\Delta^-_f(\mathcal{K})  < \l\mbinom{n_0}{2} + n_0(\ell-1)(n-n_0).
\end{equation}
It follows that $\sigma(M_w) < \ell(n-1)-(n-n_0)$ for some $w \in V_0$ and hence that $\sigma(M_v) \leq \ell(n-1)-(n-n_0)+m$ for each $v \in V$ by (E4). Thus $\Delta^-_f(\mathcal{K}) \leq (n-n_0)(\ell(n-1)-(n-n_0)+m)$. Adding this to the second and third expressions in \eqref{Equation:LambdaEvenCase1} we obtain the contradiction
\[\l\mbinom{n}{2}  < \l\mbinom{n}{2}-(n-n_0)(n-m) \leq \l\mbinom{n}{2}.\]

\textbf{Case 2.} Suppose that $\sigma(M_v) \leq (\ell+1)m$ for each $v \in V$. Recall that $\sigma(M_v) > (\ell-1) m$ for each $v \in V$. For each $v \in V$, let $y_v=\max(\sigma_{\ell}(M_v)-(\ell-1) m,\lceil\frac{1}{2}(\sigma(M_v)-(\ell-1) m)\rceil)$ and
\[M^*_v=\left\{
  \begin{array}{ll}
    \{m^{[\ell-1]},y_v\} & \hbox{if $\sigma(M_v)=(\ell -1)m +y_v$;} \\
    \{m^{[\ell-1]},y_v,\sigma(M_v)-(\ell-1) m -y_v\} & \hbox{if $\sigma(M_v) > (\ell -1)m +y_v$.}
  \end{array}
\right.\]
(Intuitively, $y_v$ is the smallest integer that ensures $\sigma_{\ell}(M^*_v) \geq \sigma_{\ell}(M_v)$ and $y_v \geq \sigma(M^*_v) - \sigma_{\ell}(M^*_v)$.)
Note that $|M^*_v| \in \{\ell,\ell+1\}$ for each $v \in V$. So, by Lemma~\ref{Lemma:MinimalFProperties1LambdaEven}, we can take $f$ to be a minimal restriction function satisfying (a)(i) and (a)(ii) of Lemma~\ref{Lemma:MinimalFProperties1LambdaEven}.
By (a)(i), $V_0 \cup V_{\ell} \cup V_{\ell+1}  = V$. By (a)(ii) and the definition of $M^*_v$, it can be seen that, for each $v \in V_{\ell}$, $\sigma_{\ell}(M^*_v)=\sigma_{\ell}(M_v)$ and we will use $\sigma_{\ell}(M_v)$ in preference $\sigma_{\ell}(M^*_v)$.

Suppose for a contradiction that $\Delta_f(\mathcal{K}) < 0$. Note that $n_0 > 0$ for otherwise $\Delta^+_f(\mathcal{K})=\l\binom{n}{2}$ and $n_0 < n$ for otherwise  $f$ is uniformly $0$, both of which contradict $\Delta_f(\mathcal{K})<0$. Let $w$ be an element of $V_0$ such that $\sigma(M_w) \leq \sigma(M_v)$ for all $v \in V_0$. By Lemma~\ref{Lemma:dualThing}
\begin{equation}\label{Equation:LambdaEvenCase2}
\sum_{v \in V_{0}} \sigma(M_v) \leq \l\mbinom{n}{2}-\Delta^-_f(\mathcal{K}) < \lambda\mbinom{n_{0}}{2}+\ell n_0n_{\ell}+(\ell-1)n_0n_{\ell+1}.
\end{equation}
It follows that
\begin{equation}\label{Equation:LambdaEvenCaseSigmaMax}
\sigma(M_w) < \ell(n-1)-n_{\ell+1}.
\end{equation}

Adding $\Delta^-_f(\mathcal{K})$ to the second and third expressions in \eqref{Equation:LambdaEvenCase2}, we have
\begin{equation}\label{Equation:LambdaEvenCase2sum}
\l\mbinom{n}{2} < \lambda\mbinom{n_{0}}{2}+\ell n_0n_{\ell}+(\ell-1)n_0n_{\ell+1}+\medop\sum_{v \in V_\ell}\sigma_{\ell}(M_v)+\medop\sum_{v \in V_{\ell+1}}\sigma(M_v).
\end{equation}
We now distinguish a number of cases. In each case we will bound $\sum_{v \in V_\ell}\sigma_{\ell}(M_v)+\sum_{v \in V_{\ell+1}}\sigma(M_v)$ and use \eqref{Equation:LambdaEvenCase2sum} to obtain a contradiction. The cases divide according to whether $n_{\ell+1} \leq \ell(n-m-1)$, whether $\lfloor\tfrac{t}{n}\rfloor \geq \ell+1$ and whether $\sigma(M_v) \leq \sigma(M_w) +\frac{m}{2}$ for each $v \in V_{\ell+1}$. Note that certainly $\lfloor\tfrac{t}{n}\rfloor \geq \ell$ because $\lambda\binom{n}{2}=\sigma(M)\leq t(n-1)$ implies that $\ell n \leq t$. It will help to remember that $|M_v| \in  \{\lfloor\tfrac{t}{n}\rfloor, \lceil\tfrac{t}{n}\rceil\}$ for each $v \in V$ by (E1).

\textbf{Case 2a.} Suppose that $n_{\ell+1} \leq \ell(n-m-1)$ or that $\lfloor\tfrac{t}{n}\rfloor \geq \ell +1$. We have $\sigma(M_v) \leq \frac{\ell+1}{\ell}\sigma(M_w)$ for each $v \in V_{\ell+1}$ by (E5). We next prove the claim that $\sigma_{\ell}(M_v) \leq \sigma(M_w)$ for each $v \in V_{\ell}$ (recall that $\sigma_{\ell}(M^*_v)=\sigma_{\ell}(M_v)$ for each $v \in V_{\ell}$).

If $n_{\ell+1} \leq \ell(n-m-1)$, then our claim is vacuously true because $V_{\ell}=\emptyset$ by Lemma~\ref{Lemma:MinimalFProperties1LambdaEven}(a)(i), noting that $\sigma_\ell(M^*_v) \leq \ell m$ for each $v \in V$. So we may assume that $\lfloor\tfrac{t}{n}\rfloor \geq \ell +1$. Let $s=\min(M_w)$ and let $v \in V_{\ell}$. If $\max(M_v) \leq s$, then $\sigma_{\ell}(M_v) \leq \ell s \leq \sigma(M_w)$, as required. If $\max(M_v) > s$, then $\sigma(M_v) \leq \sigma(M_w)+\max(M_v)-s$ by (E2) and thus $\sigma_{\ell}(M_v) \leq \sigma(M_w)+\max(M_v)-s-\min(M_v)$ because $|M_v| \geq \ell+1$ by (E1). So again $\sigma_{\ell}(M_v) \leq \sigma(M_w)$ because $\max(M_v) \leq m$ and $s+\min(M_v)>m$. Thus our claim holds.

Now, because $\sigma(M_v) \leq \frac{\ell+1}{\ell}\sigma(M_w)$ for each $v \in V_{\ell+1}$ and $\sigma_{\ell}(M_v) \leq \sigma(M_w)$ for each $v \in V_{\ell}$,  we have from \eqref{Equation:LambdaEvenCase2sum} that
\begin{align*}
\l\mbinom{n}{2} &< \lambda\mbinom{n_{0}}{2}+\ell n_0n_{\ell} + (\ell-1)n_0n_{\ell+1} + n_{\ell}\sigma(M_w) + \tfrac{\ell+1}{\ell}n_{\ell+1}\sigma(M_w) \\
&\leq \l\mbinom{n}{2}-n_{\ell+1}(\tfrac{1}{\ell}n_{\ell+1}+1) \leq \l\mbinom{n}{2},
\end{align*}
where the second inequality follows using $n_\ell=n-n_0-n_{\ell+1}$ and \eqref{Equation:LambdaEvenCaseSigmaMax}.

\textbf{Case 2b.} Suppose that $n_{\ell+1} > \ell(n-m-1)$, that $\lfloor\tfrac{t}{n}\rfloor = \ell$ and that $\sigma(M_v) \leq \sigma(M_w) +\frac{m}{2}$ for each $v \in V_{\ell+1}$. Using first $\sigma(M_v) \leq \sigma(M_w) +\frac{m}{2}$ for each $v \in V_{\ell+1}$ and the obvious fact that $\sigma_{\ell}(M_v) \leq \ell m$ for each $v \in V_{\ell}$, and then \eqref{Equation:LambdaEvenCaseSigmaMax}, we have from \eqref{Equation:LambdaEvenCase2sum} that
\begin{align*}
\l\mbinom{n}{2} &< \lambda\mbinom{n_{0}}{2}+\ell n_0n_{\ell} + (\ell-1)n_0n_{\ell+1} + \ell m n_{\ell} + n_{\ell+1}(\sigma(M_w) +\tfrac{m}{2}) \\
&\leq \l\mbinom{n}{2} - (n-n_{\ell+1})\ell(n-m-1) - n_0\left(n_{\ell+1}-\ell(n-m-1)\right) + \tfrac{1}{2}n_{\ell+1}(m-2n_{\ell+1}).
\end{align*}
Now, because $n_0 \geq 1$ and $n_{\ell+1} > \ell(n-m-1)$, we have $n_0(n_{\ell+1}-\ell(n-m-1)) \geq n_{\ell+1}-\ell(n-m-1)$, and hence
\begin{align*}
\l\mbinom{n}{2} &< \l\mbinom{n}{2} - (n-n_{\ell+1}-1)\ell(n-m-1) + \tfrac{1}{2}n_{\ell+1}\left(m-2n_{\ell+1}-2\right) \\
&\leq \l\mbinom{n}{2} -
 \beta(n-1)^2 + \tfrac{1}{2}n_{\ell+1}\left(\left(1+\tfrac{2\ell-1}{\ell}\beta\right)(n-1) - 2n_{\ell+1} - 2\right),
\end{align*}
where the second inequality is obtained using $\ell(n-m-1) \geq \beta(n-1)$ and $m \leq \alpha'(n-1) =(1-\frac{\beta}{\ell})(n-1)$.
Because this last expression is maximised when $n_{\ell+1}=\frac{1}{4}((1+\tfrac{2\ell-1}{\ell}\beta)(n-1) - 2)$ and $n \geq 5$, it is routine to obtain a contradiction.

\textbf{Case 2c.} Suppose that $n_{\ell+1} > \ell(n-m-1)$, that $\lfloor\tfrac{t}{n}\rfloor = \ell$ and that $\sigma(M_v) > \sigma(M_w) +\frac{m}{2}$ for some $v \in V_{\ell+1}$. Let $s$ be the $\ell$th greatest element in $M_w$. Note that $s \leq \frac{1}{\ell}\sigma(M_w)$. We now state and prove some useful facts.
\begin{itemize}
    \item
$s > \frac{m}{2}$. This holds if $|M_w| = \ell+1$ because then $s \geq \min(M_w)$ and $s+\min(M_w) > m$. So suppose otherwise that $|M_w| = \ell$ and $s \leq \frac{m}{2}$. Then, by (E3), $\sigma(M_v) \leq \max(s(\ell+1),\sigma(M_w)+m-s)$ for each $v \in V$. So, because $s(\ell+1) \leq \frac{m}{2}(\ell+1) \leq \ell m$ and $\sigma(M_w)-s \leq (\ell-1)m$, we have $\sigma(M_v) \leq \ell m$ for each $v \in V$. This implies the contradiction $\sigma(M) < \l\binom{n}{2}$.
    \item
$\sigma_{\ell}(M_v) \leq \min(\ell m,\sigma(M_w)+m-s)$ for each $v \in V_{\ell}$. Obviously $\sigma_{\ell}(M_v) \leq \ell m$ for each $v \in V_{\ell}$. So suppose for a contradiction that $\sigma_{\ell}(M_u) > \sigma(M_w)+m-s$ for some $u \in V_{\ell}$. If $\max(M_u) \leq s$, then $\sigma_{\ell}(M_u) \leq \sigma(M_w)$ using $s \leq \frac{1}{\ell}\sigma(M_w)$. If $\max(M_u) > s$, then $\sigma(M_u) \leq \sigma(M_w)+\max(M_u)-s$ by (E2). So in either case we have a contradiction.
    \item
$\sigma(M_v) \leq (\ell+1)s$ for each $v \in V_{\ell+1}$. Let $u$ be an element of $V_{\ell+1}$ with a maximum value of $\sigma(M_u)$ and suppose for a contradiction that $\sigma(M_u) > (\ell+1)s$. Then, by (E3), $\sigma(M_u) \leq \sigma(M_w)+m-s$. Thus, because $s > \frac{m}{2}$, $\sigma(M_u) < \sigma(M_w)+\frac{m}{2}$ which contradicts the assumption of Case 2c.
\end{itemize}

Because $\sigma_{\ell}(M_v) \leq \min(\ell m,\sigma(M_w)+m-s)$ for each $v \in V_{\ell}$ and $\sigma(M_v) \leq (\ell+1)s$ for each $v \in V_{\ell+1}$, we have from \eqref{Equation:LambdaEvenCase2sum} that
\begin{equation}\label{Equation:LambdaEvenCase2d}
\l\mbinom{n}{2}  < \lambda\mbinom{n_{0}}{2}+\ell n_0n_{\ell} + (\ell-1)n_0n_{\ell+1} + n_\ell\min(\ell m,\sigma(M_w)+m-s) + (\ell+1)s n_{\ell+1}.
\end{equation}

Consider the right hand side of \eqref{Equation:LambdaEvenCase2d} as a function of $s$ on the domain $0 < s \leq \frac{1}{\ell}\sigma(M_w)$. The derivative of this function is $(\ell+1)n_{\ell+1}>0$ when $s < \sigma(M_w)-(\ell-1)m$ and is $(\ell+2)n_{\ell+1}+n_0-n$ when $s > \sigma(M_w)-(\ell-1)m$, where we used the fact that $n_\ell=n-n_0-n_{\ell+1}$. So, if $(\ell+2)n_{\ell+1}+n_0 \geq n$, the function is nondecreasing and has a global maximum at $s = \frac{1}{\ell}\sigma(M_w)$ and if $(\ell+2)n_{\ell+1}+n_0 < n$, it has a global maximum at $s = \sigma(M_w)-(\ell-1)m$. We distinguish cases accordingly.

\textbf{Case 2c(i).} Suppose that $(\ell+2)n_{\ell+1}+n_0 \geq n$. By our discussion above we can take $s = \frac{1}{\ell}\sigma(M_w)$ in \eqref{Equation:LambdaEvenCase2d} and hence $\min(\ell m,\sigma(M_w)+m-s) \leq \frac{\ell-1}{\ell}\sigma(M_w)+m$. Using these facts and $n_\ell=n-n_0-n_{\ell+1}$,  we have
\begin{align*}
\l\mbinom{n}{2}  &< \l\mbinom{n_0}{2}  + (m + \ell n_0) (n - n_0 - n_{\ell+1}) + (\ell - 1) n_0 n_{\ell+1}+ \tfrac{1}{\ell}((\ell - 1) (n - n_0) + 2 n_{\ell+1})\sigma(M_w) \\
 &< \l\mbinom{n}{2} + \tfrac{1}{\ell}(n-n_0-n_{\ell+1})\left(n_{\ell+1}-\ell(n-m-1)\right) - \tfrac{1}{\ell}n_{\ell+1}(n_{\ell+1}+\ell)\\
&\leq \l\mbinom{n}{2}-\tfrac{\beta}{\ell}(n-1)^2+\tfrac{1}{\ell}n_{\ell+1}((1 + \beta)(n-1) - \ell - 2n_{\ell+1}),
\end{align*}
where the second inequality is obtained by applying \eqref{Equation:LambdaEvenCaseSigmaMax} and the third is obtained by substituting $n_0 \geq 1$ (recall that $n_{\ell+1} > \ell(n-m-1)$ by the conditions of this case) and $\ell(n-m-1) \geq \beta(n-1)$ and then simplifying. This last expression is maximised when $n_{\ell+1}=\frac{1}{4}\left((1 + \beta)(n-1) - \ell\right)$ and using this we obtain
\[\l\mbinom{n}{2} < \l\mbinom{n}{2}  - \left(1-\tfrac{\sqrt{2}}{2}\right)(n-1) +\tfrac{\ell}{8} <\l\mbinom{n}{2}.\]
To see that the last inequality holds, note that using the condition of Case 2c and the fact that $m \leq n-2$, we have $n \geq  n_{\ell+1} > \ell(n-m-1) \geq \ell$.

\textbf{Case 2c(ii).} Suppose $(\ell+2)n_{\ell+1}+n_0 < n$. Together with $n_0 \geq 1$ and the condition of this case that $n_{\ell+1} > \ell(n-m-1)$, this implies that $\ell(\ell+2)(n-m-1)<n-1$ and hence that $\ell \in \{1,2,3\}$ (recall that $\beta(n-1) \leq \ell(n-m-1)$). By our discussion above we can take $s = \sigma(M_w)-(\ell-1)m$ in \eqref{Equation:LambdaEvenCase2d}. Using first this fact and $n_\ell=n-n_0-n_{\ell+1}$, and then \eqref{Equation:LambdaEvenCaseSigmaMax},
\begin{align}
\l\mbinom{n}{2}&< m(\ell n + n_{\ell+1}) - ((\ell+1)n_{\ell+1} + n_0)(n_{\ell+1} - \ell(n-m-1)) \nonumber \\
&\leq m(\ell n + n_{\ell+1}) - ((\ell+1)n_{\ell+1} + 1)(n_{\ell+1} - \ell(n-m-1)) \label{Equation:EvenLambdaCase2cii}
\end{align}
where the second inequality follows by using $n_0 \geq 1$ (recall that $n_{\ell+1} > \ell(n-m-1)$ by the conditions of this case). This last expression is maximised when $n_{\ell+1} = \frac{\ell}{2}(n-m-1) + \frac{m-1}{2(\ell+1)}$ and, using this together with $m = \lfloor\alpha'(n-1)\rfloor$, it is routine to obtain a contradiction by considering the cases $\ell =1$, $\ell=2$ and $\ell=3$ individually. In particular, note that if $\ell=3$ and $n=5$, then given $n_{\ell+1}$ must be an integer the expression \eqref{Equation:EvenLambdaCase2cii} is maximised for $n_{\ell+1}=2$  and in this case the right hand side of \eqref{Equation:EvenLambdaCase2cii} is equal to $\lambda\binom{n}{2}$ which is the required contradiction.
\end{proof}

\begin{proof}[\textbf{\textup{Proof of Theorem~\ref{Theorem:NPComplete} when $\lambda$ is even}}] If $\alpha \leq 1-\tfrac{2}{\l}(3-2\sqrt{2})$, Lemma~\ref{Lemma:LambdaEvenExistence} shows that every instance of \textsc{$(\lambda,\alpha)$-star decomp} is feasible and that the required decompositions can be constructed in polynomial time. If $\alpha > 1-\tfrac{2}{\l}(3-2\sqrt{2})$, Lemma~\ref{Lemma:LambdaEvenNPComplete} shows \textsc{$(\lambda,\alpha)$-star decomp} is $\mathsf{NP}$-complete.
\end{proof}

\section{Conclusion}

As mentioned in the introduction, the problems of when a complete $\l$-fold multigraph can be decomposed into matchings, paths or cycles of specified sizes have all been completely solved with numerical necessary and sufficient conditions (see \cite{Bar,Bry,BryHorMaeSmi}). This indicates that these kinds of problems can be tractable for families of graphs with low maximum degree. Stars arguably form the simplest example of a family of high degree graphs, so our results here suggest that, for high degree families, this style of decomposition problem will prove difficult.

Rather than bounding the maximum star size, one could attempt to find other sufficient conditions for the existence of a decomposition of a complete $\l$-fold multigraph into stars of specified sizes. A condition in the style of Lonc's \cite{Lon} would be one possibility. Of course, the $\mathsf{NP}$-completeness of the general problem indicates that finding neat necessary and sufficient conditions is unlikely. Finally, it would be interesting to determine whether results in the style of those in this paper could be obtained for the problem of decomposing a complete $\l$-fold multigraph into cliques of specified orders.

\medskip
\noindent\textbf{Acknowledgments.} Thanks to Ramin Javadi for pointing out an error in a previous version of the proof of $\mathsf{NP}$-completeness. He also informs us that he and Afsaneh Khodadadpoor have independently obtained proofs of Theorem~\ref{Theorem:RealTruthLambda} and of the $\mathsf{NP}$-completeness of \textsc{$(\lambda,1)$-star decomp} that they now do not intend to publish. Thanks to Peter Dukes and Gary MacGillivray for useful discussions and to the referees for their thorough reading and useful comments. This work was supported by an AARMS Postdoctoral Fellowship and Australian Research Council grants DP150100506 and FT160100048.

\bibliographystyle{acm}

\end{document}